% % % \documentclass[10pt,psamsfonts]{amsart}
% % % \usepackage[utf8x]{inputenc}
% % % \usepackage[T2A,T1]{fontenc}
% % % \usepackage[american]{babel}
% % % 
% % % \usepackage{url}
% % % 
% % % \input{header_wlpub}
% % % 
% % % % \usepackage{showframe}
% % % 
% % % \title[Draft: Smoothed Projections over Weakly Lipschitz Domains]
% % % {Smoothed Projections\\ over Weakly Lipschitz Domains}
% % % 
% % % \author{Martin Werner Licht}
% % % %\address{None}
% % % \email{martinwl@math.uio.no}
% % % \subjclass[2000]{65N30, 58A12}
% % % \keywords{finite element exterior calculus, smoothed projection, Lipschitz manifold, weakly Lipschitz domain, Lipschitz collar, geometric measure theory}
% % % 
% % % \begin{document}
% % % 
% % % 
% % % 

\documentclass{amsart}

\usepackage{url}
%\copyrightinfo{2009}{American Mathematical Society}

\usepackage{amsfonts}
\usepackage{amsmath}
\usepackage{amssymb}
\usepackage{amsthm}
\usepackage{stmaryrd}
\usepackage{amscd}
\usepackage{mathrsfs}
\usepackage{textcomp}
\usepackage{mathtext}
\usepackage{yfonts}

\usepackage{enumerate}

\usepackage{tikz}

% %%%% Float style
% \usepackage{float}
% \floatstyle{boxed}
% \restylefloat{table}
% \restylefloat{figure}

\hyphenation{tri-an-gu-la-tion}
\hyphenation{Lip-schitz}
\hyphenation{Ra-de-ma-cher}

% AMS-Theorem environments
% Theorems, Lemmas, Corollary - italic, extra space 
\newtheorem{theorem}{Theorem}[section]
\newtheorem{lemma}[theorem]{Lemma}
\newtheorem{corollary}[theorem]{Corollary}
% Definitions, Examples - non-italic, extra space 
\theoremstyle{definition}

\newtheorem{example}[theorem]{Example}
% Remarks - non-italic, normal space 
\theoremstyle{remark}
\newtheorem{remark}[theorem]{Remark}

% Number equations in each section anew, using the section counter.
\numberwithin{equation}{section}

% 
% Nice fractions 
% 
\usepackage{nicefrac}
\newcommand{\niceonehalf}{{\nicefrac{1}{2}}}

\newcommand{\vol}{\operatorname{vol}}
\newcommand{\eps}{{\epsilon}}

\newcommand{\inv}{{-1}}

\newcommand{\Jacobian}{\operatorname{D}}
\newcommand{\dif}{{\mathrm d}}
\newcommand{\grad}{\operatorname{grad}}
\newcommand{\curl}{\operatorname{curl}}

\newcommand{\divergence}{\operatorname{div}}

\newcommand{\equivalent}{ \Longleftrightarrow }

\newcommand{\sgn}{\operatorname{sgn}}
\newcommand{\st}{ \mid }

\newcommand{\diam}{{\operatorname{diam}}}

\newcommand{\signum}{\operatorname{sgn}}

\newcommand{\Lip}{\operatorname{Lip}}
\newcommand{\trace}{\operatorname{tr}}
\newcommand{\Id}{\operatorname{Id}}

\newcommand{\convex}{\operatorname{convex}}

\newcommand{\cartan}{{\mathsf d}}
\newcommand{\cartanx}{{{\mathsf d}x}}

\newcommand{\bbN}{{\mathbb N}}

\newcommand{\bbR}{{\mathbb R}}

\newcommand{\bbZ}{{\mathbb Z}}

\newcommand{\calA}{{\mathcal A}}

\newcommand{\calC}{{\mathcal C}}

\newcommand{\calP}{{\mathcal P}}

\newcommand{\calT}{{\mathcal T}}

\newcommand{\ttH}{{\mathtt H}}

\newcommand{\scrR}{{\mathscr R}}

\newcommand{\vecX}{{\vec X}}

\begin{document}

\title[Smoothed Projections Weakly Lipschitz]{Smoothed Projections over\\ Weakly Lipschitz Domains}

\author[Martin Werner Licht]{Martin Werner Licht}
\address{Department of Mathematics, University of Oslo, P.O. Box 1053 Blindern, NO-0316 Oslo, Norway}
\email{martinwl@math.uio.no}
\thanks{This research was supported by the European Research Council through 
the FP7-IDEAS-ERC Starting Grant scheme, project 278011 STUCCOFIELDS.}

\subjclass[2010]{Primary 65N30; Secondary 58A12}
\keywords{Finite element exterior calculus, smoothed projection, weakly Lipschitz domain, Lipschitz collar, geometric measure theory}

\date{}

% \dedicatory{}

\begin{abstract}
 We develop finite element exterior calculus over weakly Lipschitz domains.
 Specifically, we construct commuting projections from $L^p$ de~Rham complexes 
 over weakly Lipschitz domains onto finite element de~Rham complexes.
 These projections satisfy uniform bounds for finite element spaces 
 with bounded polynomial degree over shape-regular families of triangulations.
 Thus we extend the theory of finite element differential forms
 to polyhedral domains that are weakly Lipschitz but not strongly Lipschitz.
 As new mathematical tools,
 we use the collar theorem in the Lipschitz category,
 and we show that the degrees of freedom in finite element exterior calculus
 are flat chains in the sense of geometric measure theory.
\end{abstract}

\maketitle

% I. Introduction 

\section{Introduction}
\label{sec:introduction}

The aim of this article is to contribute to the understanding of finite element methods
for partial differential equations over domains of low regularity. 
For partial differential equations associated to a differential complex,
projections that commute with the relevant differential operators 
are central to the analysis of mixed finite element methods.
In particular, \emph{smoothed projections} from Sobolev de~Rham complexes 
to finite element de~Rham complexes
are used in \emph{finite element exterior calculus} (FEEC) \cite{AFW1,AFW2}.
This was researched when the underlying domain is a Lipschitz domain.
In this article, we regard more generally finite element exterior calculus
when the underlying domain is merely a \emph{weakly Lipschitz domain}.
Specifically, we construct and analyze smoothed projections. 
% Our principle achievement is providing a detailed construction
% and analysis of smoothed projections when the underlying domain is a \emph{weakly Lipschitz domain}. 
% This generalizes previous results
Thus we enable the abstract Galerkin theory of finite element exterior calculus
within that generalized geometric setting.

It is easy to provide motivation for considering the class of weakly Lipschitz domains
in the context of finite element methods.
A domain is called weakly Lipschitz if its boundary can be flattened locally
by a Lipschitz coordinate transformation.
% Let us motivate our interest in this class of domains.
This generalizes the classical notion of (strongly) Lipschitz domains,
whose boundaries, by definition, can be written locally as Lipschitz graphs.
Although Lipschitz domains are a common choice for the geometric ambient
in the theoretical and numerical analysis of partial differential equations,
they exclude several practically relevant domains.
% that appear in finite element theory.
It is easy to find three-dimensional polyhedral domains that are not Lipschitz domains,
such as the ``crossed bricks domain'' \cite[p.39, Figure 3.1]{Monk2003}. %FIXME: Check the page 
But as we show in this article, every three-dimensional polyhedral domain
is still a weakly Lipschitz domain. 
% So this class of domains appears in applications.

% This application renders weakly Lipschitz domains interesting for finite element analysis.
% FIXME: How long has there been interest? Find a good replacement for 'Moreover'.
Moreover, weakly Lipschitz domains have attracted interest 
in the theory of partial differential equations 
because basic results in vector calculus, well-known for strongly Lipschitz domains,
are still available in this geometric setting 
\cite{grisvard1980boundary, picard1984elementary, hofmann2007geometric, GMM, brewster2013weighted, bauer2015maxwell}.
For example, one can show that the differential complex
\begin{align}
 \begin{CD}
  H^{1}(\Omega) 
  @>\grad>>
  H(\curl,\Omega)
  @>\curl>>
  H(\divergence,\Omega)
  @>\divergence>>
  L^{2}(\Omega)
 \end{CD}
\end{align} 
over a bounded three-dimensional weakly Lipschitz domain $\Omega$
satisfies Poincar\'e-Friedrichs inequalities, 
and realizes the Betti numbers of the domain on cohomology. 
Furthermore, a vector field version of a Rellich-type compact embedding theorem 
is valid, and the scalar and vector Laplacians over $\Omega$ have a discrete spectrum.
Recasting this in the calculus of differential forms,
one can more generally establish the analogous properties for the $L^{2}$ de~Rham complex
\begin{align}
 \label{intro:derham:L2}
 \begin{CD}
  H\Lambda^{0}(\Omega) 
  @>\cartan>>
  H\Lambda^{1}(\Omega) 
  @>\cartan>>
  \cdots
%   @>\cartan>>
%   H\Lambda^{n-1}(\Omega) 
  @>\cartan>>
  H\Lambda^{n}(\Omega) 
 \end{CD}
\end{align}
over a bounded weakly Lipschitz domain $\Omega \subset \bbR^{n}$.

%%%%%% Motivation for FEM
%%%%%% 
%%%%%% 

% We are interested to develop finite element analysis over weakly 
It is therefore of interest to develop finite element analysis over weakly Lipschitz domains. 
% We regard these results as a motivation to develop finite element analysis over weakly Lipschitz domains.
Since the analytical theory is formulated within the calculus of differential forms,
we wish to adopt this calculus on the discrete level.
Specifically, we use the framework of finite element exterior calculus,
and our agenda is to extend that framework to numerical analysis on weakly Lipschitz domains.
% to our generalized geometric setting.
% In order to replicate the analytical setting on a discrete level,
% we adopt the framework of finite element exterior calculus. 
The foundational idea is to study a finite element de~Rham complex
\begin{align}
 \label{intro:derham:discrete}
 \begin{CD}
  \Lambda^{0}(\calT)  @>\cartan>> \Lambda^{1}(\calT)  @>\cartan>> \cdots @>\cartan>> \Lambda^{n}(\calT)
 \end{CD}
\end{align}
that mimics the $L^{2}$ de~Rham complex. 
Here, each $\Lambda^{k}(\calT)$ is a subspace of $H\Lambda^{k}(\Omega)$
whose members are piecewise polynomial with respect 
to a fixed triangulation $\calT$ of the domain. 
Arnold, Falk, and Winther \cite{AFW1} have classified 

A central component of finite element exterior calculus are uniformly bounded \emph{smoothed projections}.
Our main contribution (Theorem~\ref{prop:finalprojection}) in this article is to devise such a projection
when the domain is merely weakly Lipschitz. 
A condensed version of our result is the following theorem.

\begin{theorem}
 Let $\Omega \subseteq \bbR^{n}$ be a bounded weakly Lipschitz domain,
 and let $\calT$ be a simplicial triangulation of $\Omega$.
 Let \eqref{intro:derham:discrete} be a differential complex of finite element spaces of differential forms
 as in finite element exterior calculus (\cite{AFW1}).
 Then there exist bounded linear projections $\pi^{k} : L^{2}\Lambda^{k}(\Omega) \rightarrow \Lambda^{k}(\calT)$
 such that the following diagram commutes:
 \begin{align}
  \begin{CD}
   H\Lambda^{0}(\Omega) @>\cartan>> H\Lambda^{1}(\Omega) @>\cartan>> \cdots @>\cartan>> H\Lambda^{n}(\Omega)
   \\
   @V{\pi^{0}}VV @V{\pi^{1}}VV @. @V{\pi^{n}}VV
   \\
   \Lambda^{0}(\calT)  @>\cartan>> \Lambda^{1}(\calT)  @>\cartan>> \cdots @>\cartan>> \Lambda^{n}(\calT).
  \end{CD}
 \end{align}
 Moreover, $\pi^{k} \omega = \omega$ for $\omega \in \Lambda^{k}(\calT)$.
 The operator norm of $\pi^{k}$ is bounded uniformly in terms of 
 the maximum polynomial degree of \eqref{intro:derham:discrete},
 the shape measure of the triangulation,
 and geometric properties of $\Omega$.
\end{theorem}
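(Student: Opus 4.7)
The plan is to follow the general schema of Arnold--Falk--Winther but replace each step that depended on strong Lipschitz regularity by an analogue valid in the weakly Lipschitz setting. Recall that the AFW construction proceeds in four stages: (i) extend a form on $\Omega$ to a larger domain, (ii) mollify via convolution with a smooth kernel, (iii) apply the canonical finite element interpolation based on integration over simplices and their subsimplices, and (iv) correct the nearly-identity operator obtained on $\Lambda^{k}(\calT)$ by composing with the inverse of its restriction. Stages (ii)--(iv) commute with $\cartan$ for essentially formal reasons (convolution commutes with exterior differentiation, and Stokes' theorem makes the canonical degrees of freedom into a cochain map). The real obstacle for us is to execute stage (i), and more subtly to ensure that stage (iii) remains well-defined and uniformly bounded on the output of stage (ii), which for weakly Lipschitz boundaries need not be smooth up to $\partial\Omega$.

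For the extension, I would invoke the Lipschitz collar theorem advertised in the abstract to obtain a bi-Lipschitz map $\Phi\colon \partial\Omega\times[0,1)\to\overline{C}\subset\overline{\Omega}$ onto a one-sided neighborhood of the boundary. Composing $\Phi$ with a small outward shift in the collar variable gives a bi-Lipschitz map $\Psi_{\eps}$ of a slightly inflated domain $\Omega^{+}$ back into $\Omega$; the pullback $E_{\eps}\omega:=\Psi_{\eps}^{\ast}\omega$ is then an extension from $L^{p}\Lambda^{k}(\Omega)$ to $L^{p}\Lambda^{k}(\Omega^{+})$ that commutes with the exterior derivative, since pullback by a bi-Lipschitz map commutes with $\cartan$ in the sense of distributions. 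Quantitative control of the Lipschitz constants of $\Psi_{\eps}$ in terms of the collar gives the required uniform operator norm bounds on $E_{\eps}$ on each form degree.

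Next, I mollify on $\Omega^{+}$ by convolution with a standard smooth kernel $\rho_{\eps}$ supported in a ball of radius proportional to a local mesh size, which commutes with $\cartan$ and maps into $C^{\infty}\Lambda^{k}$. The canonical interpolator $I_{\calT}$ then pairs a smooth form with each simplex of $\calT$ and each of its subsimplices. Here the flat-chain machinery becomes essential: rather than first smoothing and then integrating, I would identify the collection of simplices carrying the degrees of freedom with a family of flat chains and interpret the degrees of freedom as flat pairings, so that the boundedness of $I_{\calT}\circ R_{\eps}\circ E_{\eps}$ reduces to the uniform boundedness of these pairings under the flat norm. This is where the shape-regularity of $\calT$ enters explicitly and where the second new tool of the paper pays off; it also supplies the commutativity of $I_{\calT}$ with $\cartan$ as a Stokes-type identity on flat chains.

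The final step is standard: on each $\Lambda^{k}(\calT)$ the composition $Q^{k}_{\eps}:=I_{\calT}\circ R_{\eps}\circ E_{\eps}$ differs from the identity by an operator whose norm is $O(\eps/h)$ by inverse estimates and locality; choosing $\eps$ to be a sufficiently small multiple of the local mesh size makes the restriction $Q^{k}_{\eps}|_{\Lambda^{k}(\calT)}$ invertible with uniformly bounded inverse, and $\pi^{k}:=(Q^{k}_{\eps}|_{\Lambda^{k}(\calT)})^{-1}\circ Q^{k}_{\eps}$ is the desired projection. Commutativity with $\cartan$ descends from each factor commuting with $\cartan$. The main obstacle throughout will be stage (i): building $\Psi_{\eps}$ from the Lipschitz collar with Lipschitz bounds that depend only on the geometry of $\Omega$ and not on artificial graph representations that simply do not exist in the weakly Lipschitz category, and verifying that the tangential traces implicit in the boundary conditions transform correctly under the resulting pullback.
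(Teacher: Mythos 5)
Your architecture --- extend via a Lipschitz collar, mollify with a locally scaled radius, apply the canonical interpolator, then correct by a Neumann series following Sch\"oberl --- coincides with the paper's, and you correctly flag both new tools (the Lipschitz collar and the flat-chain interpretation). But two places need repair before the argument closes.

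First, you describe the interpolation error estimate on $\Lambda^k(\calT)$ as ``standard\dots by inverse estimates and locality.'' That is precisely where the flat-chain machinery is indispensable, not (as you suggest) for boundedness of $I_\calT\circ R_\eps\circ E_\eps$. For a simplex $T$ touching $\partial\Omega$, the extended finite element form $E^k\omega$ is merely an $L^{\infty,\infty}$ flat form near $T$: the bi-Lipschitz map in the extension destroys piecewise Lipschitz regularity, so the piecewise-smooth argument of Arnold--Falk--Winther and Christiansen--Winther (valid for interior simplices) does not apply. The paper's remedy is to realize each degree of freedom as a flat chain of finite mass with boundary of finite mass (Lemma~\ref{prop:dof_are_chains}), so that the mismatch between a degree of freedom and its image under $\Phi_{\eps {\mathtt h},y}$ is measured in the flat norm, and then to control that mismatch via the Lipschitz deformation estimate (Lemma~\ref{prop:deformationestimate}); this is the content and the novelty of Theorem~\ref{prop:interpolation_error}, and it is separate from the boundedness result Theorem~\ref{prop:interpolationbound}, which only needs continuity of the mollified form. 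You have the ingredients but have attached them to the wrong step.

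Second, the local mollification radius must be governed by a $C^\infty$ mesh size function, not merely a Lipschitz one: with a Lipschitz $\mathtt h$, the regularized form $R^k_{\eps{\mathtt h}}\omega$ is generally not continuous across element interfaces because $\nabla\mathtt h$ jumps there, so the canonical interpolator cannot be applied as you intend. The paper mollifies the piecewise affine mesh size function to get a smooth one and controls its Lipschitz constant via the inner path metric of $\overline\Omega$ (Lemmas~\ref{prop:meshsizefunction} and~\ref{prop:smoothmeshsizefunction}); see Remark~\ref{rem:mollifier} for why this repair to Christiansen--Winther is needed. A smaller remark: your extension is a shift-pullback rather than the paper's reflection across a two-sided collar; both are viable, with opposite trade-offs (a single pullback makes commutativity with $\cartan$ free but then $E_\eps\omega \neq \omega$ on $\Omega$, whereas reflection leaves $\omega$ unchanged but requires the gluing and density argument of Lemma~\ref{prop:extensionderivativecommute}), and in either case you implicitly need the two-sided bicollar of Theorem~\ref{prop:closedtwosidedcollar} to parametrize the inflated domain.
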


As an immediate consequence, the a priori error estimates of finite element exterior calculus 
are applicable over weakly Lipschitz domains. Commuting projection operators 
have been approached from different perspectives in the theory of finite elements
\cite{christiansen2007stability,schoberl2008posteriori,gopalakrishnan2012commuting,demlow2014posteriori}.
\\

Let us outline the construction of the smoothed projection
and the new tools which we employ.
We largely follow ideas in published literature \cite{AFW1,christiansen2008smoothed}
but introduce significant technical modifications in this article.
Given a differential form over the domain,
the smoothed projection is constructed in several steps.
% Our major point of generalization consists in including the class of weakly Lipschitz domains.

We first extend the differential form beyond the original domain by reflection along the boundary,
using a parameterized tubular neighborhood of the boundary.
For strongly Lipschitz domains, such a parametrization can be constructed
using the flow along a smooth vector field transversal to the boundary \cite{AFW1,christiansen2008smoothed},
but for weakly Lipschitz domains such a transversal vector field does not necessarily exist.
Instead we obtain the desired parameterized tubular neighborhood
via a variant of the collaring theorem in Lipschitz topology \cite{luukkainen1977elements}.

Next, a mollification operator smoothes the extended differential form.
In order to guarantee uniform bounds for shape-regular families of meshes, 
the mollification radius is locally controlled by a smoothed mesh size function.
This is similar to \cite{christiansen2008smoothed},
but we elaborate the details of the construction and make a minor correction;
see also Remark~\ref{rem:mollifier}.
We find that the mollified differential form has well-defined degrees of freedom.

We then apply the canonical finite element interpolator to the mollified differential form. 
The resulting \emph{smoothed interpolator} commutes with the exterior derivative
and satisfies uniform bounds, but it is generally not idempotent. 
We can, however, control the interpolation error over the finite element space.
If the smoothed interpolator is sufficiently close to the identity over the finite element space,
then a commuting and uniformly bounded discrete inverse exists.
Following an idea of Sch\"oberl \cite{schoberl2005multilevel}, 
the composition of this discrete inverse with the smoothed interpolator
yields the desired smoothed projection. 

In order to derive the aforementioned interpolation error estimate over the finite element space,
we call on geometric measure theory \cite{federer2014geometric,whitney2012geometric}.
The principle motivation in utilizing geometric measure theory is the low regularity of the boundary,
which requires new techniques in finite element theory.
A key observation, which we believe to be of independent interest,
is the identification of the degrees of freedom as \emph{flat chains}
in the sense of geometric measure theory.
The desired estimate of the interpolation error over the finite element space
is proven eventually with distortion estimates on flat chains.
To the author's best understanding, the results in this article
also provide non-trivial details for some proofs in the aforementioned references,
hitherto not available in literature; see also Remark~\ref{rem:interpolationerror}.

Most of literature on commuting projections focuses on the $L^{2}$ theory 
(but see also \cite{ern2016mollification}).
We consider differential forms with coefficients in general $L^{p}$ spaces, following \cite{gol1982differential}.
This article moreover prepares future research on smoothed projections which preserve partial boundary conditions.
\\

%%%%%
%%%%% ... is structured as follows.
%%%%%

The remainder of this work is structured as follows.
In Section~\ref{sec:geometry}, we introduce weakly Lipschitz domains and a collar theorem.
We recapitulate the calculus of differential forms in Section~\ref{sec:differentialforms}.
We briefly review triangulations in Section~\ref{sec:triangulations}.
The relevant background in geometric measure theory is given in Section~\ref{sec:gmt}.
Then we introduce finite element spaces, degrees of freedom, and interpolation operators 
in Section~\ref{sec:interpolator}.
In Section~\ref{sec:projection}, we finally construct the smoothed projection.

% II. Geometric setting

\section{Geometric Setting}
\label{sec:geometry}

We begin by establishing the geometric background. 
We review the notion of \emph{weakly Lipschitz domains}
and prove the existence of a \emph{closed two-sided Lipschitz collar} 
along the boundaries of such domains.
We refer to \cite{luukkainen1977elements} for further background in Lipschitz topology. 
\\

Throughout this article, and unless stated otherwise, 
we let finite-dimensional real vector spaces $\bbR^{n}$
and their subsets be equipped with the canonical Euclidean metrics.
We let $B_r(U)$ be the closed Euclidean $r$-neighborhood, $r > 0$, 
of any set $U \subseteq \bbR^n$,
and we write $B_r(x) := B_r(\{x\})$.
% for the Euclidean ball of radius $r > 0$
% centered at $x \in \bbR^n$.

We introduce some basic notions of Lipschitz analysis.
Let $X \subseteq \bbR^{n}$ and $Y \subseteq \bbR^{m}$,
and let $f : X \rightarrow Y$ be a mapping.
For a subset $U \subseteq X$, we let 
the \emph{Lipschitz constant} $\Lip(f,U) \in [0,\infty]$ of $f$ over $U$
be the minimal $L \in [0,\infty]$ that satisfies
\begin{align*}
 \forall x, x' \in U :
 \left\| f(x) - f(x') \right\| \leq L \| x - x' \|
%  ,
%  \quad 
%  x, x' \in U
 .
\end{align*}
% We call $\Lip(f,U)$ the \emph{Lipschitz constant} of $f$ over $U$.
% We simply write $\Lip(f)$ if $X$ is understood.
We call $f$ \emph{Lipschitz} if $\Lip(f,X) < \infty$.
% and simply write $\Lip(f)$ if $X$ is understood.
We call $f$ \emph{locally Lipschitz} or \emph{LIP}
if for each $x \in X$ there exists a relatively open neighborhood $U \subseteq X$ of $x$
% that is relatively open in $X$ and 
such that $f_{|U} : U \rightarrow Y$ is Lipschitz.
If $f$ is invertible, then we call $f$ \emph{bi-Lipschitz}
if both $f$ and $f^{\inv}$ are Lipschitz,
and we call $f$ a \emph{lipeomorphism} if both $f$ and $f^{\inv}$
are locally Lipschitz.
If $f : X \rightarrow Y$ is locally Lipschitz and injective
such that $f : X \rightarrow f(X)$ is a lipeomorphism,
then we call $f$ a \emph{LIP embedding}.
The composition of Lipschitz mappings is again Lipschitz,
and the composition of locally Lipschitz mappings is again locally Lipschitz. 
If $X$ is compact, then every locally Lipschitz mapping is also Lipschitz.
\\

Let $\Omega \subseteq \bbR^{n}$ be open.
We call $\Omega$ a \emph{weakly Lipschitz domain}
if for all $x \in \partial\Omega$ there exist a closed neighborhood
$U_x$ of $x$ in $\bbR^n$ and a bi-Lipschitz mapping
$\phi_x : U_x \rightarrow [-1,1]^n$ such that 
$\phi_x(x) = 0$ and such that 
\begin{subequations}
\label{math:weaklylipschitzdomain}
\begin{align}
 \label{math:weaklylipschitzdomain:condition1}
 \phi_x( \Omega   \cap U_x )       &= [-1,1]^{n-1} \times [-1,0)
 , \\
 \label{math:weaklylipschitzdomain:condition2}
 \phi_x( \partial\Omega \cap U_x ) &= [-1,1]^{n-1} \times \{0\}
 , \\
 \label{math:weaklylipschitzdomain:condition3}
 \phi_x( \overline\Omega^c \cap U_x )       &= [-1,1]^{n-1} \times (0,1]
 .
\end{align}
\end{subequations}
The closed sets $\{ \partial\Omega \cap U_x \st x \in \partial\Omega \}$
cover $\partial\Omega$ and the mappings 
$\phi_{x|\partial\Omega \cap U_x} : \partial\Omega \cap U_x \rightarrow [-1,1]^{n-1}$
are bi-Lipschitz. Note that $\Omega$ is a weakly Lipschitz domain
if and only if $\overline\Omega^{c}$ is a weakly Lipschitz domain.

\begin{remark}
 In other words, a weakly Lipschitz domain is a domain
 whose boundary can be flattened locally by a bi-Lipschitz 
 coordinate transformation.
 The notion of \emph{weakly Lipschitz domain} contrasts 
 with the classical notion of \emph{Lipschitz domain},
 then also called \emph{strongly Lipschitz domain}.
 A strongly Lipschitz domain is an open subset $\Omega$ of $\bbR^{n}$
 whose boundary $\partial\Omega$
 can be written locally as the graph of a Lipschitz function
 in some orthogonal coordinate system.
 Strongly Lipschitz domains are weakly Lipschitz domains,
 but the converse is generally false.
%  See also \cite{grisvard1980boundary} for a discussion.
 
 A different access towards the idea originates 
 from differential topology: a weakly Lipschitz domain
 is a \emph{locally flat Lipschitz submanifold} of $\bbR^{n}$
 in the sense of \cite{luukkainen1977elements}.
 This idea has motivated the notion of weakly Lipschitz domains
 inside general Lipschitz manifolds \cite{GMM}.
\end{remark}

\begin{example}
 Every bounded domain $\Omega \subset \bbR^{3}$ with a finite triangulation is a weakly Lipschitz domain.
 We will make this statement precise, and provide a proof, in Section~\ref{sec:triangulations}
 after having formally introduced triangulations.
 At this point, let us consider a concrete and well-known example,
 namely the crossed bricks domain, 
 which we already mentioned in the introduction. 
 Let 
 %   A well-known example of a weakly Lipschitz domain,
 %   already mentioned in the introduction, 
 %   is the crossed bricks domain. Let 
 \begin{align}\begin{split}
  \Omega_{CB} 
  := 
  (-1,1) \times (0,1) \times (0,-1) 
  % oberer klotz --- x laenge, y-z quadratisch
  &\cup
  ( 0,1) \times (0,-1) \times (-1,1) 
  % unterer klotz --- x-y quadratisch, z die laenge
  \\&\cup
  (0,1) \times \{0\} \times (0,-1)
  % kleber zwischen den beiden kloetzen
  % (x,y,z)
  .
 \end{split}\end{align}
 At the origin, $\partial\Omega_{CB}$ is not the graph of a Lipschitz function
 in any coordinate system. But whereas $\Omega_{CB}$ is not a strongly Lipschitz domain,
 it is still a weakly Lipschitz domain.
 To see this, we first observe that near every non-zero $z \in \partial\Omega_{CB}$ 
 we can write $\partial\Omega$ as a Lipschitz graph, 
 from which we can easily construct a suitable Lipschitz coordinate chart around $z$.
 Finally, to obtain a bi-Lipschitz coordinate chart at the origin, 
 we use a bi-Lipschitz mapping to transform $\Omega_{CB}$ into a domain that is a Lipschitz graph
 in a neighborhood of the origin.
 See also Figure~\ref{fig:crossedbrickspicture}.
 % 
%  For a generalization of this example, we refer to Example 2.2 in \cite{axelsson2004hodge}.
\end{example}

\begin{figure}[t]
 \begin{tabular}{cc}
  \begin{tikzpicture}
  % [line join=bevel,z={( 3.85mm, -3.85mm)}]
  [line join=bevel,x={( 1.5cm, 0mm)},y={( 0mm, 1.5cm)},z={( 1.5*3.85mm, -1.5*3.85mm)}]

  \coordinate (UL1) at ( 0, 0,-1);
  \coordinate (UL2) at ( 0,-1,-1);
  \coordinate (UL3) at ( 1, 0,-1);
  \coordinate (UL4) at ( 1,-1,-1);
  \coordinate (UR1) at ( 0, 0, 1);
  \coordinate (UR2) at ( 0,-1, 1);
  \coordinate (UR3) at ( 1, 0, 1);
  \coordinate (UR4) at ( 1,-1, 1);

  %orange!80!blue
  \draw (UL1) -- (UL2) -- (UL4) -- (UL3) -- cycle;
  \draw [fill opacity=0.7,fill=yellow] (UL1) -- (UL2) -- (UL4) -- (UL3) -- cycle;
  \draw (UL2) -- (UL4) -- (UR4) -- (UR2) -- cycle;
  \draw [fill opacity=0.7,fill=yellow] (UL2) -- (UL4) -- (UR4) -- (UR2) -- cycle;
  \draw (UL1) -- (UL2) -- (UR2) -- (UR1) -- cycle;
  \draw [fill opacity=0.7,fill=yellow] (UL1) -- (UL2) -- (UR2) -- (UR1) -- cycle;
  \draw (UL3) -- (UL4) -- (UR4) -- (UR3) -- cycle;
  \draw [fill opacity=0.7,fill=yellow] (UL3) -- (UL4) -- (UR4) -- (UR3) -- cycle;
  \draw (UL1) -- (UL3) -- (UR3) -- (UR1) -- cycle;
  \draw [fill opacity=0.7,fill=yellow] (UL1) -- (UL3) -- (UR3) -- (UR1) -- cycle;
  \draw (UR1) -- (UR2) -- (UR4) -- (UR3) -- cycle;
  \draw [fill opacity=0.7,fill=yellow] (UR1) -- (UR2) -- (UR4) -- (UR3) -- cycle;

  \coordinate (OL1) at (-1, 1,-1);
  \coordinate (OL2) at (-1, 1, 0);
  \coordinate (OL3) at (-1, 0,-1);
  \coordinate (OL4) at (-1, 0, 0);
  \coordinate (OR1) at ( 1, 1,-1);
  \coordinate (OR2) at ( 1, 1, 0);
  \coordinate (OR3) at ( 1, 0,-1);
  \coordinate (OR4) at ( 1, 0, 0);

  % green!80!blue
  \draw (OR1) -- (OR2) -- (OR4) -- (OR3) -- cycle;
  \draw [fill opacity=0.7,fill=orange] (OR1) -- (OR2) -- (OR4) -- (OR3) -- cycle;
  \draw (OL3) -- (OL4) -- (OR4) -- (OR3) -- cycle;
  \draw [fill opacity=0.7,fill=orange] (OL3) -- (OL4) -- (OR4) -- (OR3) -- cycle;
  \draw (OL1) -- (OL3) -- (OR3) -- (OR1) -- cycle;
  \draw [fill opacity=0.7,fill=orange] (OL1) -- (OL3) -- (OR3) -- (OR1) -- cycle;
  \draw (OL2) -- (OL4) -- (OR4) -- (OR2) -- cycle;
  \draw [fill opacity=0.7,fill=orange] (OL2) -- (OL4) -- (OR4) -- (OR2) -- cycle;
  \draw (OL1) -- (OL2) -- (OR2) -- (OR1) -- cycle;
  \draw [fill opacity=0.7,fill=orange] (OL1) -- (OL2) -- (OR2) -- (OR1) -- cycle;
  \draw (OL1) -- (OL2) -- (OL4) -- (OL3) -- cycle;
  \draw [fill opacity=0.7,fill=orange] (OL1) -- (OL2) -- (OL4) -- (OL3) -- cycle;

  \draw [fill] (0,0) circle [radius=0.1];

  % \draw [fill opacity=0.7,fill=orange!80!black] (A3) -- (A4) -- (B1) -- cycle;
  % \draw [fill opacity=0.7,fill=green!30!black] (A2) -- (A3) -- (C1) -- cycle;
  % \draw [fill opacity=0.7,fill=purple!70!black] (A3) -- (A4) -- (C1) -- cycle;
  \end{tikzpicture}%
  \quad\quad\quad&\quad\quad\quad
  \begin{tikzpicture}
  % [line join=bevel,z={( 3.85mm, -3.85mm)}]
  [line join=bevel,x={( 1.5cm, 0mm)},y={( 0mm, 1.5cm)},z={( 1.5*3.85mm, -1.5*3.85mm)}]

  \coordinate (OL1) at ( 0, 1,-1);
  \coordinate (OL2) at ( 0, 1, 0);
  \coordinate (OL3) at ( 0,-0.75,-1);
  \coordinate (OL4) at ( 0, 0, 0);
  \coordinate (OR1) at ( 1, 1,-1);
  \coordinate (OR2) at ( 1, 1, 0);
  \coordinate (OR3) at ( 1,-0.75,-1);
  \coordinate (OR4) at ( 1, 0, 0);

  % green!80!blue
  \draw (OR1) -- (OR2) -- (OR4) -- (OR3) -- cycle;
  \draw [fill opacity=0.7,fill=orange] (OR1) -- (OR2) -- (OR4) -- (OR3) -- cycle;
  \draw (OL3) -- (OL4) -- (OR4) -- (OR3) -- cycle;
  \draw [fill opacity=0.7,fill=orange] (OL3) -- (OL4) -- (OR4) -- (OR3) -- cycle;
  \draw (OL1) -- (OL3) -- (OR3) -- (OR1) -- cycle;
  \draw [fill opacity=0.7,fill=orange] (OL1) -- (OL3) -- (OR3) -- (OR1) -- cycle;
  \draw (OL2) -- (OL4) -- (OR4) -- (OR2) -- cycle;
  \draw [fill opacity=0.7,fill=orange] (OL2) -- (OL4) -- (OR4) -- (OR2) -- cycle;
  \draw (OL1) -- (OL2) -- (OR2) -- (OR1) -- cycle;
  \draw [fill opacity=0.7,fill=orange] (OL1) -- (OL2) -- (OR2) -- (OR1) -- cycle;
%   \draw (OL1) -- (OL2) -- (OL4) -- (OL3) -- cycle;
%   \draw [fill opacity=0.7,fill=orange] (OL1) -- (OL2) -- (OL4) -- (OL3) -- cycle;

  \coordinate (UL1) at ( 0,-0.75,-1);
  \coordinate (UL2) at ( 0,   -1,-1);
  \coordinate (UL3) at ( 1,-0.75,-1);
  \coordinate (UL4) at ( 1,   -1,-1);
  \coordinate (UR1) at ( 0, 0, 0);
  \coordinate (UR2) at ( 0,-1, 0);
  \coordinate (UR3) at ( 1, 0, 0);
  \coordinate (UR4) at ( 1,-1, 0);

  %orange!80!blue
  \draw (UL1) -- (UL2) -- (UL4) -- (UL3) -- cycle;
  \draw [fill opacity=0.7,fill=yellow] (UL1) -- (UL2) -- (UL4) -- (UL3) -- cycle;
  \draw (UL1) -- (UL3) -- (UR3) -- (UR1) -- cycle;
  \draw [fill opacity=0.7,fill=yellow] (UL1) -- (UL3) -- (UR3) -- (UR1) -- cycle;
  \draw (UL2) -- (UL4) -- (UR4) -- (UR2) -- cycle;
  \draw [fill opacity=0.7,fill=yellow] (UL2) -- (UL4) -- (UR4) -- (UR2) -- cycle;
  \draw (UL1) -- (UL2) -- (UR2) -- (UR1) -- cycle;
  \draw [fill opacity=0.7,fill=yellow] (UL1) -- (UL2) -- (UR2) -- (UR1) -- cycle;
  \draw (UL3) -- (UL4) -- (UR4) -- (UR3) -- cycle;
  \draw [fill opacity=0.7,fill=yellow] (UL3) -- (UL4) -- (UR4) -- (UR3) -- cycle;

%   \draw (UR1) -- (UR2) -- (UR4) -- (UR3) -- cycle;
%   \draw [fill opacity=0.7,fill=yellow] (UR1) -- (UR2) -- (UR4) -- (UR3) -- cycle;

  \coordinate (UL1) at ( 0, 0,0);
  \coordinate (UL2) at ( 0,-1,0);
  \coordinate (UL3) at ( 1, 0,0);
  \coordinate (UL4) at ( 1,-1,0);
  \coordinate (UR1) at ( 0,-0.75, 1);
  \coordinate (UR2) at ( 0,  -1, 1);
  \coordinate (UR3) at ( 1,-0.75, 1);
  \coordinate (UR4) at ( 1,-1, 1);

  %orange!80!blue
%   \draw (UL1) -- (UL2) -- (UL4) -- (UL3) -- cycle;
%   \draw [fill opacity=0.7,fill=yellow] (UL1) -- (UL2) -- (UL4) -- (UL3) -- cycle;
  \draw (UL2) -- (UL4) -- (UR4) -- (UR2) -- cycle;
  \draw [fill opacity=0.7,fill=yellow] (UL2) -- (UL4) -- (UR4) -- (UR2) -- cycle;
  \draw (UL1) -- (UL2) -- (UR2) -- (UR1) -- cycle;
  \draw [fill opacity=0.7,fill=yellow] (UL1) -- (UL2) -- (UR2) -- (UR1) -- cycle;
  \draw (UL3) -- (UL4) -- (UR4) -- (UR3) -- cycle;
  \draw [fill opacity=0.7,fill=yellow] (UL3) -- (UL4) -- (UR4) -- (UR3) -- cycle;
  \draw (UL1) -- (UL3) -- (UR3) -- (UR1) -- cycle;
  \draw [fill opacity=0.7,fill=yellow] (UL1) -- (UL3) -- (UR3) -- (UR1) -- cycle;
  \draw (UR1) -- (UR2) -- (UR4) -- (UR3) -- cycle;
  \draw [fill opacity=0.7,fill=yellow] (UR1) -- (UR2) -- (UR4) -- (UR3) -- cycle;

  \coordinate (OL1) at (-1, 1,-1);
  \coordinate (OL2) at (-1, 1, 0);
  \coordinate (OL3) at (-1, -0.75,-1);
  \coordinate (OL4) at (-1, 0, 0);
  \coordinate (OR1) at ( 0, 1,-1);
  \coordinate (OR2) at ( 0, 1, 0);
  \coordinate (OR3) at ( 0, -0.75,-1);
  \coordinate (OR4) at ( 0, 0, 0);

  % green!80!blue
%   \draw (OR1) -- (OR2) -- (OR4) -- (OR3) -- cycle;
%   \draw [fill opacity=0.7,fill=orange] (OR1) -- (OR2) -- (OR4) -- (OR3) -- cycle;
  \draw (OL1) -- (OL3) -- (OR3) -- (OR1) -- cycle;
  \draw [fill opacity=0.7,fill=orange] (OL1) -- (OL3) -- (OR3) -- (OR1) -- cycle;
  \draw (OL3) -- (OL4) -- (OR4) -- (OR3) -- cycle;
  \draw [fill opacity=0.7,fill=orange] (OL3) -- (OL4) -- (OR4) -- (OR3) -- cycle;
  \draw (OL2) -- (OL4) -- (OR4) -- (OR2) -- cycle;
  \draw [fill opacity=0.7,fill=orange] (OL2) -- (OL4) -- (OR4) -- (OR2) -- cycle;
  \draw (OL1) -- (OL2) -- (OR2) -- (OR1) -- cycle;
  \draw [fill opacity=0.7,fill=orange] (OL1) -- (OL2) -- (OR2) -- (OR1) -- cycle;
  \draw (OL1) -- (OL2) -- (OL4) -- (OL3) -- cycle;
  \draw [fill opacity=0.7,fill=orange] (OL1) -- (OL2) -- (OL4) -- (OL3) -- cycle;

  \draw [fill] (0,0) circle [radius=0.1];

  % \draw [fill opacity=0.7,fill=orange!80!black] (A3) -- (A4) -- (B1) -- cycle;
  % \draw [fill opacity=0.7,fill=green!30!black] (A2) -- (A3) -- (C1) -- cycle;
  % \draw [fill opacity=0.7,fill=purple!70!black] (A3) -- (A4) -- (C1) -- cycle;
  \end{tikzpicture}%
  \end{tabular} 
  
 \caption{
 Left: polyhedral domain in 3D
 that is not the graph of a Lipschitz function at the marked point.
 Right: bi-Lipschitz transformation of that domain into a strongly Lipschitz domain.
 }
 \label{fig:crossedbrickspicture}
\end{figure}
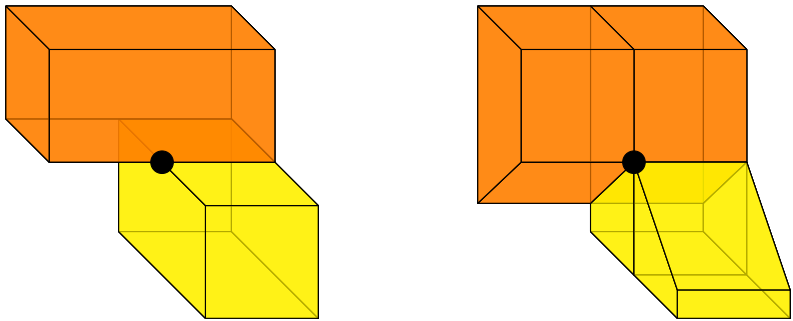

% \begin{example}
%  Every finitely-triangulated domain $\Omega$ in $\bbR^{3}$ is a weakly Lipschitz domain.
%  Let us outline the proof of that statement.
%  It is easy to obtain the relevant coordinate charts around $x \in \partial\Omega$
%  if $x$ is not a vertex of the triangulation. 
%  If instead $x$ is a vertex of the triangulation, 
%  then let $r > 0$ be so small that $B_r(x)$ intersects only tetrahedra adjacent to $x$. 
%  By  Theorem 7.8 of \cite{luukkainen1977elements}, there exists 
%  a lipeomorphism of $\partial B_r(x)$
%  which maps $\partial B_r(x) \cap \partial\Omega$ onto the intersection of $\partial B_r(x)$
%  and some plane through $x$. From this lipeomorphism on $\partial B_r(x)$,
%  we easily derive the desired coordinate chart over $B_r(x)$.
% \end{example}

The remainder of this section builds up a key notion of this article.
We show that weakly Lipschitz domains allow for a two-sided Lipschitz collar.
% This implies the existence of a tubular neighborhood of $\partial\Omega$
% in $\bbR^{n}$ which is parameterized transversally to the boundary in a Lipschitz manner.
This result will serve for the construction of a commuting extension operator
later in Section~\ref{sec:projection}.

\begin{theorem}
 \label{prop:closedtwosidedcollar}
 Let $\Omega \subseteq \bbR^{n}$ be a bounded weakly Lipschitz domain.
 Then there exists a LIP embedding 
 $\Psi : \partial\Omega \times [-1,1] \rightarrow \bbR^{n}$
 such that $\Psi(x,0) = x$ for $x \in \partial\Omega$, and
 \begin{align}
  \label{prop:closedtwosidedcollar:claimedinclusions}
  \Psi\left( \partial\Omega, [-1,0) \right) \subseteq \Omega,
  \quad
  \Psi\left( \partial\Omega, ( 0,1] \right) \subseteq \overline\Omega^{c}.
 \end{align}
\end{theorem}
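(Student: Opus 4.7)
The plan is to construct $\Psi$ by gluing two one-sided LIP collars — one running into $\overline\Omega$, one running into $\overline{\overline\Omega^c}$ — along the common boundary $\partial\Omega$. Each one-sided collar is assembled from the local bi-Lipschitz charts of \eqref{math:weaklylipschitzdomain} by a Lipschitz partition-of-unity argument, which is essentially the LIP collaring construction available in the Lipschitz topology literature \cite{luukkainen1977elements}.

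First I would construct an interior collar $\Psi^- : \partial\Omega \times [-1,0] \to \overline\Omega$ with $\Psi^-(x,0)=x$ and $\Psi^-(\partial\Omega \times [-1,0)) \subseteq \Omega$. Because $\Omega$ is bounded, $\partial\Omega$ is compact and admits a finite cover by chart neighborhoods $U_1,\dots,U_N$ with bi-Lipschitz maps $\phi_i : U_i \to [-1,1]^n$ satisfying \eqref{math:weaklylipschitzdomain}. On each chart, the model collar $(y,t) \mapsto \phi_i^{-1}(\phi_i(y)+te_n)$ for $y \in \partial\Omega \cap U_i$ and $t \in [-\eps,0]$ is a LIP embedding into $\overline\Omega$. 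I would then glue these model collars inductively: having already defined a collar on a union $U_1 \cup \dots \cup U_k$, extend it to $U_1 \cup \dots \cup U_{k+1}$ by interpolating, in the coordinates $\phi_{k+1}$, between the already-defined collar and the new model collar via a Lipschitz cut-off. The interpolation happens in the half-space $[-1,1]^{n-1}\times[-1,0]$ and reduces to a bi-Lipschitz modification of the $t$-coordinate that is the identity outside a controlled slab, which is checked to be bi-Lipschitz by direct computation. Symmetrically, using \eqref{math:weaklylipschitzdomain:condition3}, the same charts yield an exterior collar $\Psi^+ : \partial\Omega \times [0,1] \to \overline{\overline\Omega^c}$ with $\Psi^+(x,0)=x$ and $\Psi^+(\partial\Omega\times(0,1]) \subseteq \overline\Omega^c$. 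A rescaling in $t$ normalizes the collar parameters to $[-1,0]$ and $[0,1]$.

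Next I would set $\Psi(x,t) := \Psi^-(x,t)$ for $t \le 0$ and $\Psi(x,t) := \Psi^+(x,t)$ for $t \ge 0$, which agrees at $t=0$ since both equal $x$. The inclusions in \eqref{prop:closedtwosidedcollar:claimedinclusions} are then immediate. It remains to verify that $\Psi$ is a LIP embedding. Local Lipschitz continuity of $\Psi$ is clear on each closed half, and for a pair of points straddling $t=0$ the triangle inequality through $(x,0)$ yields the bound with constant $\max(\Lip\Psi^-,\Lip\Psi^+)$. Injectivity within each closed half is inherited from $\Psi^\pm$, and across $t=0$ it follows because $\Psi(\partial\Omega\times[-1,0)) \subseteq \Omega$ and $\Psi(\partial\Omega\times(0,1]) \subseteq \overline\Omega^c$ are disjoint. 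The crucial step is local Lipschitz continuity of $\Psi^{-1}$ across the interface: given $p_\pm = \Psi(x_\pm,t_\pm)$ with $t_- < 0 < t_+$, the straight segment $[p_-,p_+]$ runs from $\Omega$ to $\overline\Omega^c$ and hence meets $\partial\Omega$ at some $y$, with $|p_- - y| + |y - p_+| = |p_- - p_+|$; applying the local Lipschitz estimates of $(\Psi^-)^{-1}$ on $[p_-,y]$ and of $(\Psi^+)^{-1}$ on $[y,p_+]$ and noting $\Psi^{-1}(y) = (y,0)$ gives the desired bound on $|(x_-,t_-)-(x_+,t_+)|$.

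The main obstacle I expect is precisely this bi-Lipschitz property of the glued object rather than mere Lipschitz continuity, for which the transversal-flow construction available in the strongly Lipschitz (or smooth) case is unavailable: a weakly Lipschitz domain need not admit a Lipschitz transversal vector field on $\partial\Omega$, and only the bi-Lipschitz chart maps themselves are at our disposal. The inductive chart-by-chart gluing has to be carried out so that both the forward map and its inverse remain Lipschitz at each step, which is where the structural assumption that each $\phi_i$ maps $\Omega$, $\partial\Omega$, $\overline\Omega^c$ to the three standard pieces of $[-1,1]^n$ is essential — it lets one perform the blending entirely in the model half-space and read off bi-Lipschitz bounds from the charts.
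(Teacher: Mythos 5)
Your proposal is essentially the paper's proof: build one-sided collars $\Psi^{\pm}$ into $\overline\Omega$ and $\overline{\overline\Omega^{c}}$, glue them at $t=0$, and for the reverse Lipschitz estimate across the interface pass through the point where the straight segment between two straddling image points meets $\partial\Omega$. The only cosmetic difference is that the paper invokes Luukkainen--V\"ais\"al\"a's Theorem 7.4 directly for the one-sided collar, whereas you sketch the inductive chart-by-chart gluing that the cited theorem encapsulates.
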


\begin{proof}
 We first prove a one-sided version of the result.
 From definitions 
 we deduce that there exists a collection $\{V_i\}_{i \in \bbN}$
 of relatively open subsets of $\partial\Omega$
 that constitute a covering of $\partial\Omega$,
 and a collection $\{\psi_i\}_{i \in \bbN}$
 of LIP embeddings $\psi_i : V_i \times [0,1) \rightarrow \overline\Omega$
 such that for each $i\in \bbN$
 we have $\psi_i(x,0) = x$ for each $x \in \partial\Omega$.
 It follows that $\{ ( V_i, \psi_i ) \}_{i \in \bbN}$
 is a \emph{local LIP collar} in the sense of Definition 7.2 in \cite{luukkainen1977elements}.
 By Theorem 7.4 in \cite{luukkainen1977elements},
 and a successive reparametrizaton,
 there exists a LIP embedding 
 $\Psi^{-}(x,t) : \partial\Omega \times [0,1] \rightarrow \overline\Omega$
 such that $\Psi^{-}(x,0) = x$ for all $x \in \partial\Omega$.
 
%  Since $\Omega$ is a weakly Lipschitz domain,
%  we may pick for each $z \in \partial\Omega$ 
%  a closed neighborhood $U_z \subseteq \bbR^{n}$
%  and a bi-Lipschitz mapping $\phi_z : U_z \rightarrow [-1,1]^{n}$
%  which satisfy $\phi_z(z) = 0$ and \eqref{math:weaklylipschitzdomain}.
%  It follows that the closed sets $\{ \partial\Omega \cap U_z \st z \in \partial\Omega \}$
%  cover $\partial\Omega$.
%  We define mappings
%  \begin{align}
%   \Psi_z : \left( \partial\Omega \cap U_z \right) \times [-1,0] \rightarrow \overline\Omega,
%   \quad
%   (x,t) \mapsto \phi_z\left( \phi_z^{\inv}(x) - (0,\dots,0,t) \right)
%   .
%  \end{align}
%  It follows that $\{ \left( \partial\Omega \cap U_z, \Psi_z \right) \st z \in \partial\Omega \}$
%  is a \emph{local LIP collar} in the sense of Definition 7.2 in \cite{luukkainen1977elements}.
%  By Theorem 7.4 in \cite{luukkainen1977elements},
% %  and a successive reparametrizaton,
%  there exists a LIP embedding 
%  $\Psi^{-}(x,t) : \partial\Omega \times [0,1] \rightarrow \overline\Omega$
%  such that $\Psi^{-}(x,0) = x$ for all $x \in \partial\Omega$.
 
 Recall that also $\overline\Omega^{c}$ is a weakly Lipschitz domain. 
 By the same arguments, there exists a LIP embedding 
 $\Psi^{+}(x,t) : \partial\Omega \times [0,1] \rightarrow \Omega^{c}$
 such that $\Psi^{+}(x,0) = x$ for all $x \in \partial\Omega$.
 We combine these two LIP embeddings. 
 Let 
 \begin{align*}
  \Psi : \partial\Omega \times [-1,1] \rightarrow \bbR^{n},
  \quad
  (x,t) \mapsto
  \left\{ \begin{array}{rl}
  \Psi^{-}(x,-t) & \text{ if } t \in [-1,0), \quad x \in \partial\Omega,
  \\
  x & \text{ if } t = 0, \quad x \in \partial\Omega,
  \\
  \Psi^{+}(x, t) & \text{ if } t \in ( 0,1], \quad x \in \partial\Omega.
  \end{array} \right.
 \end{align*}
 Then $\Psi$ is well-defined, bijective, and
 \eqref{prop:closedtwosidedcollar:claimedinclusions} holds.
 Moreover, we have finite constants  
 \begin{gather*}
  C^{-} := \Lip( \Psi, \partial\Omega \times [-1,0] ),
  \quad 
  C^{+} := \Lip( \Psi, \partial\Omega \times [0,1] ),
  \\
  c^{-} := \Lip( \Psi^{\inv}, \Psi( \partial\Omega \times [-1,0] ) ),
  \quad 
  c^{+} := \Lip( \Psi^{\inv}, \Psi( \partial\Omega \times [0, 1] ) ).
 \end{gather*}
 It remains to show that $\Psi$ is a LIP embedding.
 Let $x_1, x_2 \in \partial\Omega$ and let $t_1, t_2 \in [-1,1]$.
 It suffices to show that 
 \begin{gather*}
  \| x_1 - x_2 \|
  +
  c \left| t_2 - t_1 \right|
  \leq 
  \left\| \Psi(x_1,t_1) - \Psi(x_2,t_2) \right\|
  \leq 
  \| x_1 - x_2 \|
  +
  C \left| t_2 - t_1 \right|
 \end{gather*}
 for $c = \max(c^{+},c^{-})^{\inv}$ and $C = \max( C^{+}, C^{-} )$.
 If $t_1$ and $t_2$ are both non-negative or both non-positive,
 then the both inequalities follow directly from the properties 
 of $\Psi^{+}$ or $\Psi^{-}$. Hence we consider 
 the case $t_1 < 0 < t_2$. 
 We first observe that 
 \begin{align*}
  \left\| \Psi(x_1,t_1) - \Psi(x_2,t_2) \right\|
  &\leq
  \left\| \Psi(x_1,t_1) - x_1 \right\|
  +
  \left\| x_1 - x_2 \right\|
  +
  \left\| x_2 - \Psi(x_2,t_2) \right\|
  \\&\leq
%   \left\| \Psi(x,s) - x \right\|
%   +
%   \left\| x - y \right\|
%   +
%   \left\| y - \Psi(y,t) \right\|
%   \\
%   &\quad\leq
  C^{-} |t_1| + C^{+} |t_2|
  +
  \| x_1 - x_2 \|
  \\&\leq
  \max(C^{+},C^{-}) \left| t_1 - t_2 \right|
  +
  \| x_1 - x_2 \|
  . 
 \end{align*}
 Furthermore, there exists $x \in \partial\Omega$ on the straight line segment 
 from $\Psi(x_1,t_1)$ to $\Psi(x_2,t_2)$. We then have 
 \begin{align*}
  \left\| \Psi(x_1,t_1) - \Psi(x_2,t_2) \right\|
  &=
  \left\| \Psi(x_1,t_1) - x \right\|
  +
  \left\| x - \Psi(x_2,t_2) \right\|
  \\&\geq 
  |t_1|
  +
  \left\| x_1 - x \right\|
  +
  \left\| x - x_2 \right\|
  +
  |t_2|
  \\&\geq 
  \max(c^{+},c^{-})^{\inv} |t_1 - t_2|
  +
  \left\| x_1 - x_2 \right\|
  . 
 \end{align*}
 This completes the proof. 
\end{proof}

\begin{remark}
 Our Theorem~\ref{prop:closedtwosidedcollar} realizes
 an idea from differential topology in a Lipschitz setting:
 if a surface is locally bi-collared,
 then it is also globally bi-collared.
 Such a result is well-known in the topological or smooth sense,
 but it seems to be only folklore in the Lipschitz sense. 
%  In the topological or smooth setting, that result is well-known,
%  but it seems to be only folklore in the Lipschitz setting.
 Notably, the result is mentioned in the unpublished preprint \cite{ghiloni2010complexity}.
 We have provided a proof for formal completeness.
% \end{remark}

% \begin{remark}
 For a strongly Lipschitz domain,
 it is well-known that a Lipschitz collar can be defined 
 using transversal vector fields near the boundary
 \cite{schoberl2008posteriori,StructPresDisc,gopalakrishnan2011partial}.
\end{remark}

% Let us assume that $\Omega$ is a bounded weakly Lipschitz domain.
% As a consequence of Theorem~\ref{prop:closedtwosidedcollar}
% and the compactness of $\partial\Omega$,
% there exist a compact neighborhood $\calC\Omega$ of $\partial\Omega$ in $\bbR^n$
% and a bi-Lipschitz mapping
% \begin{align*}
%  \Psi : \partial\Omega \times [-1,1] \rightarrow \calC\Omega
% \end{align*}
% such that $\Psi(x,0) = x$ for $x \in \partial\Omega$,
% and such that 
% \begin{gather*}
%  \Psi\left( \partial\Omega \times \{0\} \right) = \calC\Omega \cap \partial\Omega,
%  \\
%  \Psi\left( \partial\Omega \times [-1,0) \right) = \calC\Omega \cap \Omega,
%  \quad
%  \Psi\left( \partial\Omega \times (0,1] \right) = \calC\Omega \cap \overline\Omega^c.
%  % 
% \end{gather*}
% We call $\Psi$ the \emph{collar mapping}
% or just the \emph{Lipschitz collar}.
% We write
% \begin{align}
%  \label{math:definitionofcollarparts}
%  \calC^-\Omega := \calC\Omega \cap \Omega,
%  \quad
%  \calC^+\Omega := \calC\Omega \cap \overline\Omega^c,
%  \quad
%  \Omega^{e} = \overline\Omega \cup \calC^+\Omega 
% \end{align}
% for the \emph{interior collar part} $\calC^-\Omega$,
% the \emph{exterior collar part} $\calC^+\Omega$,
% and the \emph{extended domain} $\Omega^{e}$, respectively.
% Moreover, we have a well-defined bi-Lipschitz mapping
% \begin{align}
%  \label{math:definitioncoordinatereflection}
%  \calA : \calC^{+}\Omega \rightarrow \calC^{-}\Omega,
%  \quad
%  \Psi(x,t) \mapsto \Psi(x,-t)
% \end{align}
% from the outer collar part into the inner collar part,
% called \emph{collar reflection}.

% III. Differential forms 

\section{Differential forms}
\label{sec:differentialforms}

In this section we review the calculus of differential forms in a setting of low regularity.
Particular attention is given to differential forms with coefficients in $L^{p}$ spaces
and their transformation properties under bi-Lipschitz mappings. 
We adopt the notion of $L^{p,q}$ differential form of \cite{gol1982differential},
to which we also refer for further details on Lebesgue spaces of differential forms.
Further details can be found in \cite{gol1982differential},
from which the notion of $L^{p,q}$ differential form is adopted.
An elementary introduction to the calculus of differential forms is given in \cite{LeeSmooth}.
\\

Let $U \subseteq \bbR^n$ be an open set.
We let $M(U)$ denote the vector space of locally integrable functions over $U$. 
% up to equivalence almost everywhere.
For $k \in \bbZ$ we let $M\Lambda^{k}(U)$ be the vector space of locally integrable % measurable 
differential $k$-forms over $U$. % with respect to the Lebesgue measure.
% Note that $M(U) = M\Lambda^{0}(U)$.
% 
We denote by $\omega \wedge \eta \in M\Lambda^{k+l}(U)$ the exterior product of
$\omega \in M\Lambda^{k}(U)$ and $\eta \in M\Lambda^{l}(U)$,
and we recall that $\omega \wedge \eta = (-1)^{kl} \eta \wedge \omega$.
% Note that this product still has locally integrable coefficients,
% and that $\omega \wedge \eta = (-1)^{kl} \eta \wedge \omega$.

Let $e_1, \dots, e_n$ be the canonical orthonormal basis of $\bbR^{n}$.
% Let $e_1, \dots, e_n : U \rightarrow \bbR^{n}$ be the canonical constant coordinate vector fields.
The constant $1$-forms $\cartanx^1, \dots, \cartanx^n \in M\Lambda^{1}(U)$ 
are uniquely defined by $\cartanx^i(e_j) = \delta_{ij}$,
where $\delta_{ij} \in \{0,1\}$ denotes the Kronecker delta.
In the sequel, we let $\Sigma(k,n)$ denote the set of strictly ascending mappings
from $\{1,\dots,k\}$ to $\{1,\dots,n\}$.
Note that $\Sigma(0,n) = \{\emptyset\}$.
The \emph{basic $k$-alternators} are the exterior products    
\begin{align*}
 \cartanx^{\sigma} := \cartanx^{\sigma(1)} \wedge \dots \wedge \cartanx^{\sigma(k)} \in M\Lambda^{k}(U),
 \quad 
 \sigma \in \Sigma(k,n),
\end{align*}
and $\cartanx^{\emptyset} := 1$.
% We introduce a canonical representation of $k$-forms: 
Every $\omega \in M\Lambda^{k}(U)$ can be written uniquely as
\begin{align}
 \label{math:standardrepresentation_fields}
 \omega
 =
 \sum_{ \sigma \in \Sigma(k,n) }
 \omega_\sigma \cartanx^{\sigma}
%  \cartanx_{\sigma(1)} \wedge \ldots \wedge \cartanx_{\sigma(k)}
 ,
\end{align}
where $\omega_\sigma = \omega( e_{\sigma(1)}, \dots, e_{\sigma(k)} )$.
For every $n$-form $\omega \in M\Lambda^{n}(U)$ there exists a unique $\omega_n \in M(U)$
such that $\omega = \omega_n \vol^{n}_U$,
where $\vol^{n}_U := \cartanx^1 \wedge \ldots \wedge \cartanx^n$ is the canonical volume $n$-form of $\bbR^{n}$. 
We define the integral of $\omega \in M\Lambda^{n}(U)$ over $U$ as 
\begin{align}
 \label{math:integralofNform}
 \int_{U} \omega := \int_U \omega_n \; \dif x 
\end{align}
whenever $\omega_n \in M(U)$ is integrable.
If $\omega, \eta \in M\Lambda^{k}(U)$, 
then we define 
% $\langle \omega,\eta \rangle \in M(U)$ by 
\begin{align}
 \label{math:scalarproductofalternatingmultilinearforms}
 \langle \omega, \eta \rangle 
 :=
 \sum_{\sigma \in \Sigma(k,n)} \omega_{\sigma} \eta_{\sigma}
 \in 
 M(U)
 .
\end{align}
For $\omega \in M\Lambda^{k}(U)$ we let $|\omega| = \sqrt{\langle\omega,\omega\rangle}$. 
We let $L^{p}(U)$ denote the Lebesgue space with exponent $p \in [1,\infty]$,
and let $L^{p}\Lambda^{k}(U)$ denote the Banach space of differential $k$-forms
with coefficients in $L^{p}(U)$.
% Note that $L^{p}(U) = L^{p}\Lambda^{0}(U)$. 
The topology of $L^{p}\Lambda^{k}(U)$ is generated by the norm
\begin{align*}
 \left\| \omega \right\|_{L^{p}\Lambda^{k}(U)}
 :=
 \left\| \sqrt{ \langle \omega, \omega \rangle } \right\|_{L^{p}(U)}
 ,
 \quad 
 \omega \in L^{p}\Lambda^{k}(U)
 .
\end{align*}
We let $C\Lambda^{k}(\overline U)$ be the Banach space 
of continuous differential $k$-forms over $\overline U$, equipped with the maximum norm.
We let $C^{\infty}\Lambda^{k}(U)$ be the space of smooth differential $k$-forms over $U$,
we let $C^{\infty}\Lambda^{k}(\overline U)$ be the subspace of $C^{\infty}\Lambda^{k}(U)$
whose members can be extended smoothly onto $\bbR^{n}$,
and we let $C^{\infty}_{c}\Lambda^{k}(U)$ 
be the subspace of $C^{\infty}\Lambda^{k}(U)$
whose members have compact support in $U$. 
\\

The exterior derivative 
$\cartan : C^{\infty}\Lambda^{k}(\overline U) \rightarrow C^{\infty}\Lambda^{k+1}(\overline U)$
over smooth differential forms is defined by 
\begin{align}
 \label{math:exterior_derivative}
 \cartan \omega
 =
 \sum_{ \sigma \in \Sigma(k,n) }
 \sum_{ i=1 }^{n}
 ( \partial_i \omega_\sigma \cartanx^i ) \wedge  
 \cartanx^{\sigma},
 \quad
 \omega \in C^{\infty}\Lambda^{k}(\overline U),
\end{align}
where we use the representation \eqref{math:standardrepresentation_fields}. 
One can show that $\cartan$ is linear, that $\cartan\cartan = 0$, and that
\begin{align}
 \label{math:exteriorderivative_product}
 \cartan ( \omega \wedge \eta ) 
 =
 \cartan\omega \wedge \eta + (-1)^{k} \omega \wedge \cartan\eta
 , \quad 
 \omega \in C^{\infty}\Lambda^{k}(\overline U),
 \quad 
 \eta \in C^{\infty}\Lambda^{l}(\overline U).
\end{align}
We are are interested in defining the exterior derivative in a weak sense
over differential forms of low regularity.
If $\omega \in M\Lambda^{k}(U)$ and $\xi \in M\Lambda^{k+1}(U)$ such that
\begin{align}
 \label{math:distributionalexteriorderivative}
 \int_U \xi \wedge \eta
 =
 (-1)^{k+1} \int_U \omega \wedge \cartan\eta,
 \quad 
 \eta \in C_c^{\infty}\Lambda^{n-k-1}(U),
\end{align}
then $\xi$ is the only member of $M\Lambda^{k+1}(U)$ with this property,
up to equivalence almost everywhere, and we call $\cartan \omega := \xi$
the \emph{weak exterior derivative} of $\omega$.
Note that $\cartan\omega$ has vanishing weak exterior derivative, since 
\begin{align}
 \int_U \cartan\omega \wedge \cartan\eta
 =
 (-1)^{k} \int_U \omega \wedge \cartan\cartan\eta
 = 0,
 \quad 
 \eta \in C_c^{\infty}\Lambda^{n-k-1}(U).
\end{align}
Moreover, \eqref{math:exteriorderivative_product} generalizes in the obvious manner 
to the weak exterior derivative, provided all expressions are well-defined.
\\

Next we introduce a notion of Sobolev differential forms.
For $p, q \in [1,\infty]$, 
we let $L^{p,q}\Lambda^{k}(U)$ be the space of differential $k$-forms in $L^{p}\Lambda^{k}(U)$
whose members have a weak exterior derivative in $L^{q}\Lambda^{k+1}(U)$.
We equip $L^{p,q}\Lambda^{k}(U)$ with the norm
\begin{align}
 \| \omega \|_{L^{p,q}\Lambda^{k}(U)}
 =
 \| \omega \|_{L^{p}\Lambda^{k}(U)}
 +
 \| \cartan \omega \|_{L^{q}\Lambda^{k+1}(U)}.
\end{align}
% It is obvious that ${L^{p,q}\Lambda^{k}(U)}$ is a Banach space.
% The following two approximation results,
% which are stated in \cite[Lemma 1.3]{gol1982differential}, 
% are of interest to us in the sequel:
Moreover, we have the following density result \cite[Lemma 1.3]{gol1982differential}.

\begin{lemma}
 \label{prop:approximation:Lpq}
 If $p,q \in [1,\infty)$,
 then $C^{\infty}\Lambda^{k}(\overline U)$ is dense in $L^{p,q}\Lambda^{k}(U)$.
\end{lemma}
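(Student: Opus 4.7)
The plan is to adapt the classical Meyers-Serrin theorem to the $L^{p,q}$ setting, combining a smooth partition of unity on $U$ with local mollification. First, exhaust $U$ by relatively compact open subsets $U_0 \Subset U_1 \Subset \cdots$ with $\bigcup_j U_j = U$, for example $U_j := \{x \in U : \dist(x,\partial U) > (j+1)^{\inv}\} \cap B_j(0)$. Set $V_j := U_{j+2} \setminus \overline{U_j}$ (with the convention $U_{-2} = U_{-1} = \emptyset$), and choose a smooth partition of unity $\{\phi_j\}_{j \geq 0}$ on $U$ subordinate to the cover $\{V_j\}$, so that $\sum_j \phi_j \equiv 1$ on $U$ as a locally finite sum.

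Next, for each $j$ the form $\phi_j \omega$ has compact support in $V_j$. Fix a standard mollifier $\eta$ and choose the mollification radius $\eps_j > 0$ smaller than $\dist(\supp \phi_j, \partial U)$ and small enough that $\| (\phi_j \omega) * \eta_{\eps_j} - \phi_j \omega \|_{L^p(U)} \leq 2^{-j} \eps$ and $\| \cartan(\phi_j \omega) * \eta_{\eps_j} - \cartan(\phi_j \omega) \|_{L^q(U)} \leq 2^{-j} \eps$. This is possible because $\phi_j \omega$ and $\cartan(\phi_j \omega) = \cartan\phi_j \wedge \omega + \phi_j \, \cartan\omega$ both lie in $L^{r}$ on $\supp \phi_j$ for some finite $r$, mollification commutes with $\cartan$ on compactly supported forms, and convolutions with $\eta_{\eps_j}$ converge to the identity in $L^r$ for every $r < \infty$ (this is where the finiteness of both exponents enters). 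Set $\omega_\eps := \sum_j (\phi_j \omega) * \eta_{\eps_j}$; this locally finite sum is smooth on $U$, and each individual summand extends by zero to a member of $C^\infty_c(\bbR^n) \subset C^\infty\Lambda^k(\overline U)$.

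The triangle inequality combined with $\omega = \sum_j \phi_j \omega$ yields $\|\omega - \omega_\eps\|_{L^p(U)} \leq 2\eps$. For the exterior-derivative part, one uses the cancellation $\sum_j \cartan\phi_j \equiv 0$, obtained by differentiating $\sum_j \phi_j \equiv 1$, to telescope the errors and derive $\|\cartan\omega - \cartan\omega_\eps\|_{L^q(U)} \leq 2\eps$. The main technical obstacle is twofold. First, to guarantee that $\omega_\eps$ itself lies in $C^\infty\Lambda^k(\overline U)$—rather than merely being smooth on $U$—one must arrange the $\eps_j$ to decay sufficiently rapidly relative to $\dist(V_j, \partial U)$ so that the locally finite sum, extended by zero outside $U$, is smooth across $\partial U$. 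Second, when $p < q$ the cross term $\cartan\phi_j \wedge \omega$ from the Leibniz rule is a priori only in $L^p$ on $\supp\phi_j$, so the $L^q$ control for its mollification requires exploiting the bounded measure of $\supp\phi_j$ together with the cancellation identity above, which prevents a loss of integrability in passing from $\phi_j \omega$ to $\cartan(\phi_j \omega)$.
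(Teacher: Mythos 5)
The paper does not prove this lemma itself; it cites it verbatim from Gol'dshtein--Kuz'minov--Shvedov \cite[Lemma 1.3]{gol1982differential}, so there is no internal proof to compare against. Your proposal is the Meyers--Serrin partition-of-unity argument, which is the right tool for a weaker statement, namely that forms smooth \emph{on the open set} $U$ (and lying in $L^{p,q}$) are dense in $L^{p,q}\Lambda^k(U)$. But the lemma asserts density of $C^\infty\Lambda^k(\overline U)$, i.e.\ forms that extend smoothly to all of $\bbR^n$, and the Meyers--Serrin construction simply does not produce such forms.

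You flag this gap yourself, but the proposed remedy --- taking $\eps_j$ to decay rapidly relative to $\dist(V_j,\partial U)$ so that $\sum_j (\phi_j\omega)*\eta_{\eps_j}$, extended by zero, becomes smooth across $\partial U$ --- is backwards and cannot work. A derivative bound of the form
\begin{align*}
 \| \partial^\alpha \bigl[ (\phi_j\omega)*\eta_{\eps_j} \bigr] \|_{L^\infty}
 \;\lesssim\;
 \eps_j^{-|\alpha|-n} \, \| \phi_j\omega \|_{L^1}
\end{align*}
shows that shrinking $\eps_j$ \emph{degrades} control of the higher derivatives, and there is no mechanism forcing the locally finite sum (whose summands have supports accumulating on $\partial U$) to converge in $C^\infty$ near $\partial U$. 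More fundamentally, the stronger conclusion is genuinely false for arbitrary open $U$: in the slit disk $U=B_1\setminus\{(x,0):x\geq 0\}\subset\bbR^2$ the $0$-form $\omega=\theta$ belongs to $L^{1,1}\Lambda^0(U)$ but cannot be $L^{1,1}$-approximated by restrictions of smooth functions on $\bbR^2$, since those have matching traces on both sides of the slit. Thus any correct proof must use some regularity of $\partial U$, which your argument never exploits.

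In the paper's actual uses (e.g.\ the half-cube $(-1,1)^{n-1}\times(-1,0)$ in the proof of Lemma~\ref{prop:extensionderivativecommute}) the open set is a bounded Lipschitz domain, and there the standard technique is different: localize by a partition of unity subordinate to the boundary charts, translate $\omega$ a small distance \emph{into} the domain along a direction transversal to the (flattened) boundary, then mollify with a radius small compared to the translation distance. The mollified form is then smooth on a neighborhood of $\overline U$, not merely on $U$, and the $L^{p,q}$ error is controlled because translation and mollification each converge to the identity in $L^p$ and $L^q$ for $p,q<\infty$ and commute with $\cartan$. That is the step your proposal is missing.
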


% % % % \begin{lemma}
% % % %  \label{prop:approximation:flat}
% % % %  For $\omega \in L^{\infty,\infty}\Lambda^{k}(U)$
% % % %  there exists a sequence $\omega_i \in C^{\infty}\Lambda^{k}(\overline U)$
% % % %  such that for compact subsets $K \subset U$ we have
% % % %  $\omega_i \rightarrow \omega$ and $\cartan\omega_i \rightarrow \cartan\omega$
% % % %  almost everywhere in $K$.
% % % % %  $\omega_i \rightarrow \omega$ in $L^{\infty}\Lambda^{k}(K)$
% % % % %  and $\cartan\omega_i \rightarrow \cartan\omega$ in $L^{\infty}\Lambda^{k+1}(K)$.
% % % % \end{lemma}

Note that, by definition, $\cartan L^{p,q}\Lambda^{k}(U) \subset L^{q,r}\Lambda^{k+1}(U)$
for $p,q,r \in [1,\infty]$. 
Hence one may study de~Rham complexes of the form
\begin{align*}
 \begin{CD}
  \cdots 
  @>\cartan>>
  L^{p,q}\Lambda^{k  }(U)
  @>\cartan>>
  L^{q,r}\Lambda^{k+1}(U)
  @>\cartan>>
  \cdots 
 \end{CD}
\end{align*}
The choice of the Lebesgue exponents determines
analytical and algebraic properties of these de~Rham complexes.
This is not a subject of the present article,
but we refer to \cite{troyanov2006sobolev} for corresponding results over smooth manifolds.
De~Rham complexes of the above form with a Lebesgue exponent $p$ fixed 
are known as $L^{p}$ de~Rham complexes (e.g.\ \cite{mitrea2002traces}).
% where the integrability coefficients may chosen arbitrarily.
Two examples of such de~Rham complexes
% , which have received much attention in literature,
are of specific relevance to us.

\begin{example}
 The space $H\Lambda^{k}(U) = L^{2,2}\Lambda^{k}(U)$
 is a Hilbert space, consisting of those $L^{2}$ differential $k$-forms
 whose exterior derivative has $L^{2}$ integrable coefficients.
%  The scalar product
%  on $H\Lambda^{k}(U)$ is given by  
 \begin{align*}
  \langle \omega, \eta \rangle_{H\Lambda^{k}(U)}
  =
  \langle \omega, \eta \rangle_{L^{2}\Lambda^{k}(U)}
  +
  \langle \cartan \omega, \cartan \eta \rangle_{L^{2}\Lambda^{k+1}(U)},
  \quad
  \omega, \eta \in H\Lambda^{k}(U).
 \end{align*}
 induces the topology of $H\Lambda^{k}(U)$. 
 In particular, the norms $\| \cdot \|_{L^{2,2}\Lambda^{k}(U)}$
 and $\| \cdot \|_{H\Lambda^{k}(U)}$ are equivalent.
 These spaces constitute the $L^{2}$ de~Rham complex 
 \begin{align*}
  \begin{CD}
   \cdots 
   @>\cartan>>
   H\Lambda^{k  }(U)
   @>\cartan>>
   H\Lambda^{k+1}(U)
   @>\cartan>>
   \cdots 
  \end{CD}
 \end{align*}
 which has received considerable attention in global and numerical analysis.
\end{example}

\begin{example}
 The space $L^{\infty,\infty}\Lambda^{k}(U)$ of \emph{flat differential forms} 
 is spanned by those differential forms with essentially bounded coefficients whose exterior derivative 
 has essentially bounded coefficients.
 This coincides with the notion of flat differential form in geometric integration theory \cite{whitney2012geometric}; 
 see also Theorem 1.5 of \cite{gol1982differential}.
 These spaces constitute the flat de~Rham complex 
 \begin{align*}
  \begin{CD}
   \cdots 
   @>\cartan>>
   L^{\infty,\infty}\Lambda^{k  }(U)
   @>\cartan>>
   L^{\infty,\infty}\Lambda^{k+1}(U)
   @>\cartan>>
   \cdots 
  \end{CD}
 \end{align*}
 which has been studied extensively in geometric integration theory.
\end{example}

We conclude this section with some basic results on the behavior of differential forms
and their integrals under pullback by bi-Lipschitz mappings.
For the remainder of this section, we let $U, V \subseteq \bbR^{n}$ be open sets, 
and let $\Phi  : U \rightarrow V$ be a bi-Lipschitz mapping.
% So $\Phi ^{\inv} : V \rightarrow U$ is Lipschitz.

% Let $\Phi  : U \rightarrow V$ be a bi-Lipschitz mapping between open subsets $U, V \subseteq \bbR^{n}$.
We first gather some facts on the Jacobians of bi-Lipschitz mappings. 
It follows from Rademacher's theorem \cite[Theorem 3.1.6]{federer2014geometric} that the Jacobians
\begin{align*}
 \Jacobian \Phi  : U \rightarrow \bbR^{n\times n},
 \quad
 \Jacobian \Phi ^{\inv} : V \rightarrow \bbR^{n\times n}
\end{align*}
exist almost everywhere. One can show that 
\begin{align}
 \label{math:lipschitzdifferential:bounds}
 \| \Jacobian\Phi  \|_{L^{\infty}(U)} \leq \Lip(\Phi ,U),
 \quad
 \| \Jacobian\Phi ^{\inv} \|_{L^{\infty}(V)} \leq \Lip(\Phi ^{\inv},V).
\end{align}
According to \cite[Lemma 3.2.8]{federer2014geometric}, the identities
\begin{align}
 \label{math:lipschitzdifferential:inversejacobians}
 \Jacobian \Phi ^{\inv}_{\Phi (x)} \cdot \Jacobian \Phi _{x}
 =
 \Id_{U},
 \quad
 \Jacobian \Phi _{\Phi ^{\inv}(y)} \cdot \Jacobian \Phi ^{\inv}_{y}
 =
 \Id_{V}
\end{align}
hold true almost everywhere over $U$ and $V$, respectively.
In particular, the Jacobians have full rank almost everywhere. 
Moreover, by \cite[Corollary 4.1.26]{federer2014geometric} 
the signs of the Jacobians are essentially locally constant:
under the condition that $U$ and $V$ are simply connected, 
there exists $o(\Phi ) \in \{-1,1\}$ such that
\begin{align}
 \label{math:lipschitzdifferential:signum}
 o(\Phi ) = \sgn \det \Jacobian\Phi ,
 \quad
 o(\Phi )
 = o(\Phi ^{\inv})
\end{align}
almost everywhere over $U$, respectively. 
It follows from \cite[Theorem 3.2.3]{federer2014geometric} that
\begin{align}
 \label{math:transformationssatz}
 \int_{U} \omega \left( \Phi (x) \right) | \det\Jacobian\Phi_{x}  | \;\dif x
 =
 \int_{V} \omega (y) \;\dif y 
\end{align}
for $\omega  \in M(V)$ if at least one of the integrals exists.

The pull-back $\Phi ^{\ast} \omega \in M\Lambda^{k}(U)$ of $\omega \in M\Lambda^{k}(V)$ under $\Phi$ is defined as  
\begin{align*}
 \Phi ^{\ast} \omega_x( v_1, \dots, v_k )
 :=
 \omega_{\Phi (x)} ( \Jacobian\Phi _x \cdot v_1, \dots, \Jacobian\Phi _x \cdot v_k ).
\end{align*}
By the discussion at the beginning of Section 2 of \cite{gol1982differential},
the algebraic identity
\begin{align*}
 \Phi ^{\ast} ( \omega \wedge \eta ) = \Phi ^{\ast} \omega \wedge \eta + (-1)^{k} \omega \wedge \Phi ^{\ast} \eta 
\end{align*}
holds for $\omega \in M\Lambda^{k}(V)$ and $\eta \in M\Lambda^{l}(V)$. 
Next we show how the integral of $n$-forms transforms under pullback by bi-Lipschitz mappings:

\begin{lemma}
 \label{prop:transformationssatz}
 If $\Phi  : U \rightarrow V$ is a bi-Lipschitz mapping between simply connected open subsets of $\bbR^{n}$,
 then
 \begin{align}
  \label{math:volumeform:transformationssatz}
  \int_{U} \Phi ^{\ast} \left( \omega \vol^{n}_V \right)  
  =
  o( \Phi ) \int_{V} \omega \vol^{n}_V,
  \quad
  \omega \in M(V).
 \end{align}
\end{lemma}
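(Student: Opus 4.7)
The plan is to reduce the identity to the scalar change-of-variables formula \eqref{math:transformationssatz} by computing the pullback of the volume form explicitly. The only non-routine step is handling the sign, which is where the simply connectedness of $U$ and $V$ enters through \eqref{math:lipschitzdifferential:signum}.

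First I would compute $\Phi^{\ast}\vol^{n}_V$ pointwise. By Rademacher's theorem the Jacobian $\Jacobian\Phi_x$ exists for almost every $x \in U$, and at such $x$ the definition of pullback gives
\begin{align*}
 (\Phi^{\ast}\vol^{n}_V)_x(e_1,\dots,e_n)
 = \vol^{n}_V(\Jacobian\Phi_x \cdot e_1, \dots, \Jacobian\Phi_x \cdot e_n)
 = \det \Jacobian\Phi_x,
\end{align*}
so that $\Phi^{\ast}\vol^{n}_V = (\det \Jacobian\Phi) \vol^{n}_U$ almost everywhere on $U$. Combined with $\Phi^{\ast}\omega = \omega \circ \Phi$ for the $0$-form $\omega \in M(V)$ and the product rule for the pullback (here trivially since $\omega$ is a $0$-form), this yields
\begin{align*}
 \Phi^{\ast}\left( \omega \vol^{n}_V \right) = (\omega \circ \Phi)\, \det \Jacobian\Phi \cdot \vol^{n}_U
\end{align*}
almost everywhere on $U$.

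Next I would invoke \eqref{math:lipschitzdifferential:signum}: since $U$ is simply connected there is an essentially constant sign $o(\Phi) \in \{-1,1\}$ with $\det \Jacobian\Phi = o(\Phi)|\det \Jacobian\Phi|$ almost everywhere. Using the definition \eqref{math:integralofNform} of the integral of an $n$-form and the change-of-variables formula \eqref{math:transformationssatz} applied to the scalar function $\omega \in M(V)$, I compute
\begin{align*}
 \int_U \Phi^{\ast}( \omega \vol^{n}_V )
 &= \int_U \omega(\Phi(x)) \det \Jacobian\Phi_x \; \dif x
 \\
 &= o(\Phi) \int_U \omega(\Phi(x)) |\det \Jacobian\Phi_x| \; \dif x
 = o(\Phi) \int_V \omega(y) \; \dif y
 = o(\Phi) \int_V \omega \vol^{n}_V,
\end{align*}
whenever either side exists. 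This is the claimed identity.

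The main (mild) obstacle is the bookkeeping around measurability and the almost-everywhere validity of the pointwise formula $\Phi^{\ast}\vol^{n}_V = (\det \Jacobian\Phi)\vol^{n}_U$; once Rademacher's theorem and \eqref{math:lipschitzdifferential:signum} are in hand this reduces to the standard area formula \eqref{math:transformationssatz}. No smoothness of $\Phi$ is required beyond what Rademacher provides.
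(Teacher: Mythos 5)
Your proposal is correct and takes essentially the same route as the paper: both write $\Phi^{\ast}(\omega\,\vol^{n}_V)$ as $(\omega\circ\Phi)\det\Jacobian\Phi\cdot\vol^{n}_U$, pull the essentially constant sign $o(\Phi)$ out via \eqref{math:lipschitzdifferential:signum}, and then apply the scalar change-of-variables formula \eqref{math:transformationssatz}. You spell out the pointwise computation of $\Phi^{\ast}\vol^{n}_V$ a bit more explicitly than the paper does, but the substance is identical.
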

\begin{proof}
 Using \eqref{math:lipschitzdifferential:signum} and \eqref{math:transformationssatz},
 we find 
 \begin{align*}
  \int_{U} \Phi ^{\ast} \left( \omega \vol^{n}_V \right)  
  &=
  \int_{U} \omega \circ \Phi (x) \det\left( \Jacobian\Phi _x \right) \dif x
%   \\&=
%   \int_{U}
%   \omega \circ \Phi (x) \cdot
%   \det\Jacobian\Phi _{\Phi ^{\inv}\Phi (x)} \cdot
%   |\det\Jacobian\Phi _{\Phi ^{\inv}\Phi (x)}|^{\inv}
%   |\det\Jacobian\Phi _{x}|
%   \dif x
  \\&=
  \int_{U}
  \omega \circ \Phi (x) \cdot
  \signum\det\Jacobian\Phi _{x} \cdot
  |\det\Jacobian\Phi _{x}|
  \; \dif x
% %  \end{align*}
% %  Applying \eqref{math:lipschitzdifferential:signum} and \eqref{math:transformationssatz},
% %  we observe 
% %  \begin{align*}
%   &\int_{U}
%   \omega \circ \Phi (x) \cdot
%   \signum\det\Jacobian\Phi _{\Phi ^{\inv}\Phi (x)} \cdot
%   |\det\Jacobian\Phi _{x}|
%   \dif x
  =
%   \int_{V}
%   \omega \cdot
%   \signum\det\Jacobian\Phi _{\Phi ^{\inv}(x)}
%   \dif x
%   =
  o( \Phi ) \int_{V} \omega \vol^{n}_V
  .
 \end{align*}
 This shows the desired identity.
\end{proof}

It can be shown that the pullback under bi-Lipschitz mappings
commutes with the exterior derivative 
and preserves the $L^{p}$ and $L^{p,q}$ classes of differential forms.
% 
% Besides the integration of $n$-forms, we are also interested in integral estimates that pertain to the $L^{p}$ norm
% of differential forms. Here and in the sequel, we set $n/\infty := 0$ for $n \in \bbN$.
% The pullback by bi-Lipschitz mappings preserves $L^{p}$ regularity, and commutes with the exterior derivative.

\begin{lemma}[Theorem 2.2 of \cite{gol1982differential}]
 \label{prop:pullback:Lpq}
 Let $\Phi  : U \rightarrow V$ be a bi-Lipschitz mapping 
 between simply-connected open subsets of $\bbR^{n}$,
 and let $p,q \in [1,\infty]$.
 If $\omega \in L^{p}\Lambda^{k}(V)$, then $\Phi ^{\ast} \omega \in L^{p}\Lambda^{k}(U)$,
 and if moreover $\omega \in L^{p,q}\Lambda^{k}(V)$, then $\Phi ^{\ast} \omega \in L^{p,q}\Lambda^{k}(U)$
 and $\Phi ^{\ast} \cartan \omega = \cartan \Phi ^{\ast} \omega$.
\end{lemma}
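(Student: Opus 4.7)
The plan is to first establish that pullback by a bi-Lipschitz mapping is $L^{p}$-bounded, then to verify the commutation identity $\cartan\Phi^{\ast}\omega = \Phi^{\ast}\cartan\omega$ for smooth $\omega$ by mollifying $\Phi$, and finally to extend both statements to $L^{p,q}\Lambda^{k}(V)$ by a density argument.

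First I would derive a pointwise estimate of the form $|\Phi^{\ast}\omega(x)| \leq c_{n,k}\Lip(\Phi,U)^{k}|\omega(\Phi(x))|$ almost everywhere. This follows by expanding $\Phi^{\ast}\omega$ in basic alternators: each coefficient is a sum of $k\times k$ minors of $\Jacobian\Phi_{x}$ times a coefficient of $\omega$ evaluated at $\Phi(x)$, and each minor is controlled by \eqref{math:lipschitzdifferential:bounds}. Combining this with the change of variables formula \eqref{math:transformationssatz} (applied via $\Phi^{\inv}$), and using \eqref{math:lipschitzdifferential:inversejacobians}--\eqref{math:lipschitzdifferential:bounds} to bound $|\det\Jacobian\Phi_x|^{-1}=|\det\Jacobian\Phi^{\inv}_{\Phi(x)}|$ in $L^{\infty}$, I obtain
\begin{align*}
\|\Phi^{\ast}\omega\|_{L^{p}\Lambda^{k}(U)}
\leq
c_{n,k}\Lip(\Phi,U)^{k}\Lip(\Phi^{\inv},V)^{n/p}\|\omega\|_{L^{p}\Lambda^{k}(V)}
\end{align*}
for every $p \in [1,\infty]$.

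Next, for smooth $\omega \in C^{\infty}\Lambda^{k}(\overline V)$, I would verify the identity $\cartan\Phi^{\ast}\omega = \Phi^{\ast}\cartan\omega$ by mollifying $\Phi$. Extend $\Phi$ to a Lipschitz mapping on a neighborhood of $\overline U$ and let $\Phi_{\eps}$ denote standard mollifications. On any relatively compact subdomain with closure contained in $U$, the maps $\Phi_{\eps}$ are smooth, converge uniformly to $\Phi$, and their Jacobians $\Jacobian\Phi_{\eps}$ are uniformly bounded in $L^{\infty}$ and converge to $\Jacobian\Phi$ almost everywhere by Rademacher's theorem, hence in every $L^{r}$ with $r<\infty$ by dominated convergence. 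The classical chain rule for smooth maps gives $\cartan\Phi_{\eps}^{\ast}\omega = \Phi_{\eps}^{\ast}\cartan\omega$ pointwise. Since both sides are polynomials in entries of $\Jacobian\Phi_{\eps}$ whose coefficients depend continuously on $\Phi_{\eps}$, they converge in $L^{1}_{\text{loc}}(U)$ to $\Phi^{\ast}\cartan\omega$ and to the distributional exterior derivative of $\Phi^{\ast}\omega$, respectively. Exhausting $U$ then yields the identity almost everywhere on $U$.

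For general $\omega \in L^{p,q}\Lambda^{k}(V)$ with $p,q \in [1,\infty)$, I would invoke Lemma~\ref{prop:approximation:Lpq} to obtain $\omega_{j} \in C^{\infty}\Lambda^{k}(\overline V)$ with $\omega_{j} \to \omega$ in $L^{p,q}\Lambda^{k}(V)$. The $L^{p}$ boundedness of the first step then gives $\Phi^{\ast}\omega_{j} \to \Phi^{\ast}\omega$ in $L^{p}\Lambda^{k}(U)$ and $\Phi^{\ast}\cartan\omega_{j} \to \Phi^{\ast}\cartan\omega$ in $L^{q}\Lambda^{k+1}(U)$. Since $\cartan\Phi^{\ast}\omega_{j} = \Phi^{\ast}\cartan\omega_{j}$ by the previous step, and the weak exterior derivative is closed under $L^{p}\times L^{q}$ convergence (as one sees by passing to the limit in \eqref{math:distributionalexteriorderivative}), the limit $\Phi^{\ast}\omega$ lies in $L^{p,q}\Lambda^{k}(U)$ with $\cartan\Phi^{\ast}\omega = \Phi^{\ast}\cartan\omega$.

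The principal obstacle will be the mollification argument for smooth $\omega$, since it requires the nonlinear expression $\cartan\Phi_{\eps}^{\ast}\omega$ (a polynomial in $\Jacobian\Phi_{\eps}$ composed with smooth functions of $\Phi_{\eps}$) to converge distributionally to $\cartan\Phi^{\ast}\omega$; this rests on the bounded a.e.\ convergence of the Jacobians together with Rademacher's theorem. When $p$ or $q$ equals $\infty$, Lemma~\ref{prop:approximation:Lpq} does not apply directly, so the density step must be replaced by a weak-formulation argument: test against $\eta \in C_{c}^{\infty}\Lambda^{n-k-1}(U)$, transport the integral to $V$ via Lemma~\ref{prop:transformationssatz} using $(\Phi^{\inv})^{\ast}\eta$, and combine the already established identity on a dense subclass with the $L^{\infty}$ bound from the first step.
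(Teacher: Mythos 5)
The paper does not prove this lemma; it is quoted as an external result (Theorem 2.2 of Gol'dshtein, Kuz'minov, and Shvedov), so there is no in-text argument to compare with. Your self-contained proof is correct in substance and follows the natural route: the pointwise pullback estimate (which the paper itself proves a few lines later as Theorem~\ref{prop:pullbackestimate}) yields $L^{p}$ boundedness of $\Phi^{\ast}$; mollifying $\Phi$, using the uniform $L^{\infty}$ bound on $\Jacobian\Phi_{\eps}$ together with its a.e.\ convergence to $\Jacobian\Phi$, reduces the commutation identity for smooth $\omega$ to the classical chain rule plus a dominated-convergence passage to the limit; and Lemma~\ref{prop:approximation:Lpq} carries the identity to $L^{p,q}$ for finite exponents via the $L^{p}$ bound and the distributional closedness of the weak exterior derivative. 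One small attribution slip: the a.e.\ convergence $\Jacobian\Phi_{\eps}\to\Jacobian\Phi$ is the Lebesgue differentiation theorem applied to the $L^{\infty}_{\mathrm{loc}}$ function $\Jacobian\Phi$; Rademacher only supplies the existence of $\Jacobian\Phi$ a.e.

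The one genuinely weak spot is your final paragraph on $p=\infty$ or $q=\infty$. As written, ``combine the already established identity on a dense subclass with the $L^{\infty}$ bound'' is not quite an argument: smooth forms are not dense in $L^{\infty,\infty}$, and transporting the test form produces $(\Phi^{\inv})^{\ast}\eta$, which is only Lipschitz and therefore not an admissible test form in \eqref{math:distributionalexteriorderivative}; you would have to mollify it and invoke the already-proved commutation for $\Phi^{\inv}$ on smooth compactly supported forms, which is another layer of argument you have not spelled out. A cleaner repair: the memberships $\Phi^{\ast}\omega\in L^{p}\Lambda^{k}(U)$ and $\Phi^{\ast}\cartan\omega\in L^{q}\Lambda^{k+1}(U)$ already follow from your pointwise bound for \emph{every} $p,q\in[1,\infty]$; the remaining assertion $\cartan\Phi^{\ast}\omega=\Phi^{\ast}\cartan\omega$ is purely local (an $L^{1}_{\mathrm{loc}}$ identity); and since every $\omega\in L^{\infty,\infty}\Lambda^{k}(V)$ lies in $L^{1,1}\Lambda^{k}(V')$ for every bounded open $V'\subseteq V$, the identity is inherited from the finite-exponent case by restricting $\Phi$ to an exhaustion of $U$ by relatively compact open subsets. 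With that substitution the proof is complete and does not need the transport-to-$V$ device at all.
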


We refine the preceding statement and give an explicit estimate for the norm of the pullback operation. 
Here and in the sequel, $n/\infty = 0$ for $n \in \bbN$.

\begin{theorem}
 \label{prop:pullbackestimate}
 Let $\Phi : U \rightarrow V$ be a bi-Lipschitz mapping
 between open sets $U, V \subseteq \bbR^{n}$,
 and let $p \in [1,\infty]$. 
 Then 
 \begin{align}
  \label{math:pullbackestimate}
  \begin{split}
  \| \Phi^{\ast} \omega \|_{L^{p}\Lambda^{k}(U)}
  &\leq 
  \| \Jacobian\Phi \|^{k}_{L^{\infty}(U)}
  \| \det\Jacobian\Phi^{\inv} \|^{\frac{1}{p}}_{L^{\infty}(V)}
  \| \omega \|_{L^{p}\Lambda^{k}(V)}
  \\&\leq 
  \| \Jacobian\Phi \|^{k}_{L^{\infty}(U)}
  \| \Jacobian\Phi^{\inv} \|^{\frac{n}{p}}_{L^{\infty}(V)}
  \| \omega \|_{L^{p}\Lambda^{k}(V)}
  \end{split}
 \end{align}
 for $\omega \in L^{p}\Lambda^{k}(U)$.
\end{theorem}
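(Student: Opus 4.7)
The plan is to derive the bound pointwise and then integrate, using the change-of-variables formula from Lemma~\ref{prop:transformationssatz} (or more precisely \eqref{math:transformationssatz}) to move the integration from $U$ to $V$.

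First, I would establish a pointwise algebraic estimate. Since $\Phi^{\ast}\omega_x$ is obtained by precomposing the alternating $k$-multilinear form $\omega_{\Phi(x)}$ with $k$ copies of the linear map $\Jacobian\Phi_x$, a standard multilinear-algebra calculation (or a direct bound on each coefficient in the representation \eqref{math:standardrepresentation_fields}) yields
\begin{align*}
 |\Phi^{\ast}\omega_x| \leq \|\Jacobian\Phi_x\|^{k} \, |\omega_{\Phi(x)}|
\end{align*}
for almost every $x \in U$, where $\|\cdot\|$ is the operator norm. Combined with \eqref{math:lipschitzdifferential:bounds}, this shows the pointwise bound $|\Phi^{\ast}\omega_x| \leq \|\Jacobian\Phi\|_{L^{\infty}(U)}^{k} \, |\omega_{\Phi(x)}|$.

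Next I would handle the case $p \in [1,\infty)$. Raising the pointwise bound to the $p$-th power and integrating over $U$, then applying the transformation formula \eqref{math:transformationssatz} to the nonnegative integrand $|\omega|^{p}$ via the substitution $y = \Phi(x)$ (using \eqref{math:lipschitzdifferential:inversejacobians} to rewrite the Jacobian factor), I obtain
\begin{align*}
 \int_{U} |\Phi^{\ast}\omega_x|^{p} \, \dif x
 \leq
 \|\Jacobian\Phi\|_{L^{\infty}(U)}^{kp}
 \int_{V} |\omega_y|^{p} \, |\det\Jacobian\Phi^{\inv}_y| \, \dif y.
\end{align*}
Pulling the essential supremum of $|\det\Jacobian\Phi^{\inv}|$ out of the integral and taking the $p$-th root yields the first inequality of \eqref{math:pullbackestimate}. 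The case $p = \infty$ follows directly from the pointwise bound together with the convention $n/\infty = 0$, since the determinant factor is absent.

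The second inequality of \eqref{math:pullbackestimate} is then obtained from the elementary linear-algebra estimate $|\det A| \leq \|A\|^{n}$ for $A \in \bbR^{n \times n}$ (e.g.\ Hadamard's inequality), applied to $A = \Jacobian\Phi^{\inv}_y$ almost everywhere on $V$, giving $\|\det\Jacobian\Phi^{\inv}\|_{L^{\infty}(V)}^{1/p} \leq \|\Jacobian\Phi^{\inv}\|_{L^{\infty}(V)}^{n/p}$. The only real subtlety is verifying the pointwise alternator bound in step one, but this is an immediate consequence of how pullback acts on each basic $k$-alternator $\cartanx^{\sigma}$; the rest of the argument is the standard change-of-variables pattern.
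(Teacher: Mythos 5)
Your proof is correct and follows essentially the same route as the paper: establish the pointwise algebraic bound $|\Phi^{\ast}\omega_{|x}| \leq \|\Jacobian\Phi_{|x}\|^{k}\,|\omega_{|\Phi(x)}|$, integrate the $p$-th power over $U$, change variables using \eqref{math:transformationssatz} and pull out the essential supremum of $|\det\Jacobian\Phi^{\inv}|$, and finish with Hadamard's inequality for the second estimate. You justify the pointwise bound via the operator norm of the $k$-th exterior power of $\Jacobian\Phi_x$ while the paper expands in the basic $k$-alternators, but these are two phrasings of the same estimate and do not amount to a genuinely different argument.
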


\begin{proof}
 Let $\Phi : U \rightarrow V$ and $p \in [1,\infty]$
 be as in the statement of the theorem,
 and let $\omega \in L^{p}\Lambda^{k}(U)$.
 If $x \in U$ such that $\Jacobian\Phi_{|x}$ exists, then we observe
 \begin{align*}
%   | \Phi^{\ast} \omega |^{2}_{|x}
%   &=
  \left\| \Phi^{\ast} \omega \right\|^{2}_{\ell^{2}|x}
  &=
  \sum_{ \rho \in \Sigma(k,n) }
  \left\langle \Phi^{\ast} \omega_{|x}, \cartanx^{\rho} \right\rangle^{2} 
%   \\&=
%   \sum_{ \sigma, \rho \in \Sigma(k,n) } 
%   \left( \Phi^{\ast} \omega_{\sigma|x} \right)^{2} 
%   \cdot \left\langle \Phi^{\ast} \cartanx^{\sigma}_{|x}, \cartanx^{\rho} \right\rangle^{2} 
  \\&=
  \sum_{ \sigma \in \Sigma(k,n) } 
  \left( \omega_{\sigma|\Phi(x)} \right)^{2} 
  \sum_{ \rho \in \Sigma(k,n) } 
  \left\langle \Phi^{\ast} \cartanx^{\sigma}_{|x}, \cartanx^{\rho} \right\rangle^{2} 
  \\&=
  \sum_{ \sigma \in \Sigma(k,n) } 
  \left( \omega_{\sigma|\Phi(x)} \right)^{2} 
  \left\| \Phi^{\ast} \cartanx^{\sigma}_{|x} \right\|^{2} 
  \\&\leq
  \left\| \Jacobian\Phi \right\|^{2k} 
  \sum_{ \sigma \in \Sigma(k,n) } 
  \left( \omega_{\sigma|\Phi(x)} \right)^{2} 
  =
  \left\| \Jacobian\Phi_{|x} \right\|^{2k} 
  \left\| \omega_{|\Phi(x)} \right\|^{2} 
  . 
 \end{align*}
 From this we easily infer that 
 \begin{align*}
  \| \Phi^{\ast} \omega \|_{L^{p}\Lambda^{k}(U)}
  \leq
  \| \Jacobian\Phi \|^{k}_{L^{\infty}(U)}
  \big\| \|\omega\|_{\ell^{2}} \circ \Phi \big\|_{L^{p}\Lambda^{k}(U)}
  . 
 \end{align*}
 If $p = \infty$, then the desired statement follows trivially,
 and if $p \in [1,\infty)$, then Lemma~\ref{prop:transformationssatz} provides 
 \begin{align*}
  \int_{U} \left\| \omega \right\|^{p}_{|\Phi(x)} \dif x
  &\leq 
  \| \det \Jacobian\Phi_{|x}^{\inv} \|_{L^{\infty}(V)}
  \int_{U}
   \left\| \omega \right\|^{p}_{|\Phi(x)} 
   |\det \Jacobian\Phi_{|x}|
  \dif x
  \\&\leq 
  \| \det \Jacobian\Phi_{|x}^{\inv} \|_{L^{\infty}(V)}
  \int_{\Phi(U)}
   \left\| \omega \right\|^{p}_{x} 
  \dif x
 \end{align*}
 We note that $\Phi(U) = V$. This shows the first inequality,
 and the second inequality follows by Hadamard's inequality.
\end{proof}

% IV. Triangulations

\section{Triangulations}
\label{sec:triangulations}

In this section we review simplicial triangulations of domains and related notions,
of which most is standard in literature. 
% We consider only simplicial discretizations of domains in this article. 

%%%%%
%%%%% Introduce triangulation
%%%%%

Let us assume that $\Omega \subseteq \bbR^{n}$ is a bounded open set
such that $\overline\Omega$ is a topological manifold with boundary
and $\Omega$ is its interior.
A \textit{finite triangulation} of $\overline\Omega$ is a finite set $\calT$ of closed simplices
such that the union of the elements of $\calT$ equals $\overline \Omega$,
such that for any $T \in \calT$ and any subsimplex $S \subseteq T$ we have $S \in \calT$,
and such that for all $T, T' \in \calT$ the set $T\cap T'$ is either empty
or a common subsimplex of both $T$ and $T'$.
We write
\begin{align*}
 \Delta(T) := \left\{ S \in \calT \st S \subseteq T \right\}
 ,
 \quad
 \calT(T) := \left\{ S \in \calT \st S \cap T \neq \emptyset \right\}
 .
\end{align*}
With some abuse of notation, we let $\calT(T)$ also denote the closed set 
that is the union of the simplices of $\calT$ adjacent to $T$.
We write $\calT^m$ for the set of $m$-dimensional simplices in $\calT$.

% Having formally introduced triangulations, 
% we make precise and prove that polyhedral domains in $\bbR^{3}$,
% in the following sense, are weakly Lipschitz domains.
Having formally introduced triangulations,
we make precise and prove the introduction's claim 
that all polyhedral domains in $\bbR^{3}$
are weakly Lipschitz domains. 

\begin{theorem}
 Let $\Omega \subset \bbR^{3}$ be an open set
 such that $\overline\Omega$ is a topological manifold with interior $\Omega$.
 If there exists a finite triangulation $\calT$ of $\Omega$,
 then $\Omega$ is a weakly Lipschitz domain. 
\end{theorem}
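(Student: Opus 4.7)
The plan is as follows. Fix a boundary point $x \in \partial\Omega$ and let $\tau \in \calT$ be the unique simplex whose relative interior contains $x$. A short connectedness argument shows that the relative interior of any simplex of $\calT$ lies either entirely in $\Omega$ or entirely in $\partial\Omega$; in particular, since $3$-simplices have relative interior open in $\bbR^{3}$ and hence in $\Omega$, we must have $\dim\tau \in \{0,1,2\}$ and the relative interior of $\tau$ is contained in $\partial\Omega$. My aim is to construct a local bi-Lipschitz flattening chart by analyzing the simplicial star of $\tau$, invoking the topological manifold-with-boundary hypothesis on $\overline\Omega$, and exploiting the fact that any piecewise-linear homeomorphism between compact finite simplicial complexes is automatically bi-Lipschitz, since it is affine with uniformly bounded Jacobian on each of finitely many simplices.

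Denote by $\sigma(\tau) := \bigcup\{ T \in \calT \st \tau \subseteq T\}$ the closed star of $\tau$; it is a closed neighborhood of $x$ in $\overline\Omega$. The topological type of the link $\calL := \mathrm{Lk}(\tau,\calT)$ is dictated by the manifold-with-boundary hypothesis as follows. If $\dim\tau = 2$, then exactly one $3$-simplex of $\calT$ contains $\tau$ (otherwise $x$ would lie in the interior of $\overline\Omega$), so $\sigma(\tau)$ is a single tetrahedron with $\tau$ as a boundary face. If $\dim\tau = 1$, then $\calL$ is a $1$-dimensional complex; a small transverse slice of $\overline\Omega$ at $x$ is a topological $2$-manifold with boundary, forcing $\calL$ to be a PL closed arc whose two endpoints correspond to the two $2$-faces of $\calT$ incident to $\tau$ that lie in $\partial\Omega$. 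If $\dim\tau = 0$ and $\tau = \{v\}$, then $\calL$ is a $2$-dimensional complex; a small topological sphere around $v$ intersected with $\overline\Omega$ is a $2$-manifold with boundary homeomorphic to a closed disk, so $\calL$ is a triangulated topological $2$-disk.

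In each case, classical low-dimensional PL topology yields a PL homeomorphism from $\calL$ to a standard simplicially subdivided model (an interval or a disk). The nontrivial ingredient is the vertex case, where one invokes the classical fact---due essentially to Rad\'o and refined through the two-dimensional Hauptvermutung---that any triangulated topological $2$-disk is PL-homeomorphic, after a common simplicial subdivision, to a standard triangulated $2$-disk. Taking the join with $\tau$ yields a PL homeomorphism of $\sigma(\tau)$ onto the standard closed half-cube $[-1,1]^{2} \times [-1,0]$ that carries $\partial\Omega \cap \sigma(\tau)$ onto $[-1,1]^{2} \times \{0\}$, and this map is bi-Lipschitz by the general fact noted above.

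To upgrade this to a bi-Lipschitz chart $\phi_{x} : U_{x} \to [-1,1]^{3}$ on a full three-dimensional neighborhood $U_{x}$ of $x$ in $\bbR^{3}$, I would extend $\calT$ locally by gluing finitely many new simplices onto the boundary $2$-faces of $\sigma(\tau)$ lying in $\partial\Omega$, producing a local triangulation of an open $\bbR^{3}$-neighborhood of $x$. Applied to this enlarged complex the case analysis above runs with $\calL$ replaced by a topological $2$-sphere split along a topological circle representing $\partial\Omega$; a PL flattening of this sphere-circle pair to the standard $(S^{2},S^{1})$ model followed by coning produces the desired chart, satisfying \eqref{math:weaklylipschitzdomain:condition1}--\eqref{math:weaklylipschitzdomain:condition3} by construction. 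The principal obstacle is the reliance on two-dimensional PL topology---the PL equivalence of triangulated $2$-disks, and of triangulated sphere-circle pairs, to their standard models---which is classical but must be cited with care, together with the verification that the combinatorial extension of $\calT$ across $\partial\Omega$ can be performed locally without destroying the simplicial structure already present on $\sigma(\tau)$.
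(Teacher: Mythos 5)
Your proposal is correct in outline but takes a genuinely different route from the paper's. Where you argue combinatorially through the simplicial link of the vertex, the paper argues geometrically: it fixes a small round ball $B_r(x)$ meeting only simplices that contain $x$, observes that $\partial B_r(x)\cap\partial\Omega$ is a simple closed curve made of finitely many spherical arcs, applies a bi-Lipschitz Schoenflies-type flattening theorem for such curve--sphere pairs (Theorem 7.8 in \cite{luukkainen1977elements}), and then cones radially from $x$. The paper's route avoids what is the principal weakness in yours: it never needs to enlarge $\calT$ across $\partial\Omega$, because $B_r(x)$ already supplies a full three-dimensional neighborhood of $x$. In your version, you must first produce a compatible local triangulation of $\overline{B_r(x)\setminus\overline\Omega}$ so that the link of $x$ in the enlarged complex becomes the $2$-sphere you wish to flatten; this is achievable by standard PL extension results (extend the triangulation induced by $\calT$ on $\partial\Omega$ near $x$ to a triangulation of the polyhedron $\overline{B_r(x)\setminus\overline\Omega}$), but you acknowledge it without carrying it out, and it is the one step that requires real work. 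Once that is in place, your remaining tools --- two-dimensional PL Schoenflies / Rad\'o, the automatic bi-Lipschitzness of PL homeomorphisms between compact rectilinear complexes, and the join with $x$ --- do produce a chart satisfying \eqref{math:weaklylipschitzdomain:condition1}--\eqref{math:weaklylipschitzdomain:condition3}, and they are arguably more elementary than the Lipschitz flattening theorem the paper cites. A dividend of your argument: your explicit identification of the link of a boundary vertex as a triangulated $2$-disk makes visible a point the paper leaves implicit, namely that $\partial B_r(x)\cap\partial\Omega$ is indeed a single simple closed curve.
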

% FIXME: Decompress this proof.
\begin{proof}
 Let $x \in \partial\Omega$. 
 We need to find a compact neighborhood $U_x \subseteq \bbR^{3}$ of $x$
 and a bi-Lipschitz mapping $\phi_x : U_x \rightarrow [-1,1]^3$
 such that $\phi_x(x) = 0$ and \eqref{math:weaklylipschitzdomain}.
 It is easy to find such $U_x$ and $\phi_x$ 
 if $x \notin \calT^{0}$.
 
 It remains to consider the case $x \in \calT^{0}$.
 Let $r > 0$ be so small that $B_r(x)$
 intersects $T \in \calT^{3}$ if and only if $x \in T$.
 We observe that $\partial B_r(x) \cap \partial\Omega$
 is a simple closed curve in $\partial B_r(x)$
 composed of finitely many spherical arcs.
 By Theorem 7.8 of \cite{luukkainen1977elements},
 there exists a bi-Lipschitz mapping
 \begin{align*}
  \phi^{0}_x : \partial B_r(x) \rightarrow \partial B_1(0) \subset \bbR^{3}
 \end{align*}
%  $\phi^{0}_x : \partial B_r(x) \rightarrow \partial B_1(0) \subset \bbR^{3}$
 which maps $\partial B_r(x) \cap \partial\Omega$ onto $\partial B_1(0) \cap \{x \in \bbR^{3} \st x_3 = 0\}$.
 By radial continuation, we obtain a bi-Lipschitz mapping 
 \begin{align*}
  \phi^{1}_x : B_r(x) \rightarrow B_1(0) \subset \bbR^{3}
 \end{align*}
% $\phi^{1}_x : B_r(x) \rightarrow B_1(0) \subset \bbR^{3}$
 which maps $B_r(x) \cap \partial\Omega$ onto $B_1(0) \cap \{x \in \bbR^{3} \st x_3 = 0\}$
 and with $\phi^{1}_x(x) = 0$.
 Moreover, there exists a bi-Lipschitz mapping 
 \begin{align*}
  \phi^{2}_x : B_1(0) \rightarrow [-1,1]^{3}
 \end{align*}
% $\phi^{2}_x : B_1(0) \rightarrow [-1,1]^{3}$ be a bi-Lipschitz mapping 
 which maps $B_1(0) \cap \{x \in \bbR^{3} \st x_3 = 0\}$ onto $[-1,1]^{2} \times \{0\}$
 with $\phi^{2}_x(0) = 0$.
 The theorem follows with $U_x := B_r(x)$ and $\phi_x := \phi_x^{2} \phi_x^{1}$.
\end{proof}

%%%%%
%%%%% shape stuff 
%%%%%

The remainder of this section is devoted to notions of regularity of triangulations.
Let us fix a finite triangulation $\calT$ of $\overline\Omega$.
When $T \in \calT^{m}$ is any simplex of the triangulation, 
then we write $h_T = \diam(T)$ for the diameter of $T$,
and $|T| = \vol^{m}(T)$ for the $m$-dimensional volume of $T$.
If $V \in \calT^{0}$, then $|V| = 1$,
and $h_V$ is defined, by convention, as the average length of all $n$-simplices of $\calT$
that are adjacent to $V$.
% Moreover, we write $h_{\Omega} = \diam(\Omega)$.

% Let us revisit some notions of regularity of triangulations 
% without going too much into theoretical foundations.
We define the \emph{shape constant} of $\calT$ as the minimal $C_{mesh} > 0$ 
that satisfies 
% \footnote{Decide whether this should restrict to $n$-simplices.}
% be the \emph{shape constant} of $\calT$, 
% which is the minimal number that satisfies 
\begin{gather}
 \label{math:shapeconditions:flatness}
 \forall T \in \calT^{n}
 : h_T^n \leq C_{mesh} |T|,
 \\
 \label{math:shapeconditions:comparison}
 \forall T \in \calT, S \in \calT(T)
 : 
 h_T \leq C_{mesh} h_S.
\end{gather}
Intuitively, \eqref{math:shapeconditions:flatness} describes a bound on the flatness of the simplices,
while \eqref{math:shapeconditions:comparison} describes that the diameter of adjacent simplices are comparable.
In applications, we consider families of triangulations,
such as generated by successive uniform refinement \cite{Bey}
or newest vertex bisection \cite{maubach1995local},
whose shape constants are uniformly bounded.
% satisfy \eqref{math:shapeconditions:flatness} and \eqref{math:shapeconditions:comparison}
% with uniformly bounded shape constants. 
% 
% with uniformly bounded shape constants.
% For example, sequences of triangulations generated by successive uniform refinement \cite{Bey}
% or newest vertex bisection \cite{maubach1995local}
% % have uniformly bounded shape constants.
% satisfy \eqref{math:shapeconditions:flatness} and \eqref{math:shapeconditions:comparison}
% with uniformly bounded constants. 

% We henceforth call a real number \emph{shape-uniform} or \emph{shape-uniformly bounded}
% if it can be bounded in terms of $C_{mesh}$ and the dimension $n$.
We can bound some important quantities in terms of $C_{mesh}$ and the geometric ambient. 
There exists a constant $C_N > 0$, depending only on $C_{mesh}$ and the ambient dimension $n$, such that 
\begin{align}
 \forall T \in \calT 
 :
 |\calT(T)| \leq C_{N}.
\end{align}
This bounds the numbers of neighbors of any simplex. 
There exists a constant $\eps_{h} > 0$, depending only on $C_{mesh}$ and $\Omega$, such that
\begin{subequations}
\label{math:neighborcontainment}
\begin{gather}
 \label{math:neighborcontainment:uno}
 \forall T \in \calT 
 :
 B_{\eps_h h_T}(T) \subseteq \Omega^{e},
 \\
 \label{math:neighborcontainment:duo}
 \forall T \in \calT 
 :
 B_{\eps_h h_T}(T) \cap \overline\Omega \subseteq \calT(T).
\end{gather}
\end{subequations}
In the sequel, we use affine transformations to a reference simplex. Let 
\begin{align*}
 \Delta^{n} = \convex\{0,e_1,\dots,e_n\} \subseteq \bbR^{n}
\end{align*}
be the $n$-dimensional reference simplex.
For each $n$-simplex $T \in \calT^{n}$ of the triangulation,
we fix an affine transformation
$A_T(x) = M_T x + b_T$
where $b_T \in \bbR^{n}$ and $M_T \in \bbR^{n \times n}$ 
are such that $A_T( \Delta^{n} ) = T$.
Each matrix $M_T$ is invertible, and 
% we have 
% for each $T \in \calT^{n}$,
\begin{align}
 \label{math:referencetransformation}
 \| M_T \|
 \leq c_{M} h_T, 
 \quad 
 \| M_T^{\inv} \|
 \leq C_{M} h_T^{\inv}
\end{align}
for constants $c_{M}, C_{M} > 0$ that depend only on $C_{mesh}$ and $n$.

% V. Chains 

\section{Elements of Geometric Measure Theory}
\label{sec:gmt}

This section gives an outline of relevant ideas from geometric measure theory,
for which we use Whitney's monograph \cite{whitney2012geometric} as the main reference. 
Our motivation to consider geometric measure theory
lies in proving Theorem~\ref{prop:interpolation_error} later in this paper.
A key observation for this purpose is that the degrees of freedom 
of finite element exterior calculus are \emph{flat chains} (Lemma~\ref{prop:dof_are_chains}).
Analogously, we identify finite element differential forms as \emph{flat forms}.
This allows us to estimate Lipschitz deformations of flat chains (Lemma~\ref{prop:deformationestimate})
in a finite element setting. 
\\

We begin with basic notions of chains and cochains,
The space $P^{k}(\bbR^{n})$ of \emph{polyhedral $k$-chains}
is the vector space of functions 
which can be written as finite sums $\sum_{i=1}^{l} a_i \chi_{S_i}$,
where $a_i \in \bbR$ and $\chi_{S_i}$ is the indicator function of a closed simplex $S_i$.
We consider two such functions as equivalent if they are identical almost everywhere
with respect to the $k$-dimensional Hausdorff measure. 

We fix an arbitrary orientation for each $k$-simplex $S \subseteq \bbR^{n}$,
henceforth called positive orientation. We identify $S$ in positive orientation with $\chi_S$,
and $S$ in negative orientation with $-\chi_S$.
The \emph{boundary operator} $\partial : P^{k}(\bbR^{n}) \rightarrow P^{k-1}(\bbR^{n})$
is now defined as follows. If $S$ is an oriented $k$-simplex,
then $\partial S$ is the formal sum $F_0 + \dots + F_{k}$
of its faces equipped with the outward orientation induced by $S$. 
This defines $\partial$ over $P^{k}(\bbR^{n})$ by linear extension.

The \emph{mass} $|S|_{k}$ of a polyhedral $k$-chain is its $L^{1}$-norm 
with respect to the $k$-dimensional Hausdorff measure. 
We write $\overline{P^{k}({\bbR^n})}$ for the completion of $P^{k}({\bbR^n})$ by the mass norm.
We define the \emph{flat norm} of $S \in P^{k}({\bbR^n})$ as
\begin{align}
 \label{math:definition_flat_norm}
 \| S \|_{k,\flat} 
 := \inf_{ Q \in P^{k+1}({\bbR^n}) }
%  \left\{ 
  |S - \partial Q|_{k} + |Q|_{k+1} 
%   )
%  \right\}.
 .
\end{align}
One can show that $\|\cdot\|_{k,\flat}$ is a norm on $P^{k}({\bbR^n})$.
We define the Banach space $\calC_{k}^{\flat}({\bbR^n})$,
the space of \emph{flat $k$-chains in ${\bbR^n}$}
as the completion of $P^{k}({\bbR^n})$ with respect to the flat norm.
We have $\| S \|_{k,\flat} \leq | S |_{k}$ for $S \in P^{k}({\bbR^n})$,
so $\overline{P^{k}(\bbR^{n})}$ embeds densely into $\calC_{k}^{\flat}({\bbR^n})$.
We moreover have $\| \partial S \|_{k-1,\flat} \leq \| S \|_{k,\flat}$ for $S \in P^{k}({\bbR^n})$,
so the boundary operator extends to a bounded linear operator
$\partial : \calC_{k}^{\flat}({\bbR^n}) \rightarrow \calC_{k-1}^{\flat}({\bbR^n})$.
Note, however, that $\partial$ is densely-defined but unbounded over $\overline{P^{k}({\bbR^n})}$.
\\

We now study the duality of flat chains and flat differential forms. 
Flat forms were studied in \cite{whitney2012geometric},
there mainly as representations of flat cochains,
and in \cite{gol1982differential}.
For the following facts, we refer to 
Section 2 of \cite{gol1982differential} and
Chapters IX and X of \cite{whitney2012geometric}.

Flat differential forms have well-defined traces on simplices.
More precisely, for each $m$-simplex $S \subset \bbR^{n}$
there exists a bounded linear mapping 
$\trace : L^{\infty,\infty}\Lambda^{k}(\bbR^{n}) \rightarrow L^{\infty,\infty}\Lambda^{k}(S)$,
which extends the trace of smooth forms.
In particular, $\trace_{S} \omega$ does only depend on the values of $\omega$ near $S$.
We write $\int_S \omega$ for the integral of $\omega \in L^{\infty,\infty}\Lambda^{k}(\bbR^{n})$
over a $k$-simplex $S$. 
This pairing extends by linearity to $S \in \overline{P^{k}(\bbR^{n})}$. 
We have 
\begin{align}
 \label{math:dualityestimate:polyhedral}
 \left| \int_S \omega \right|
 \leq 
 | S |_{k} \| \omega \|_{L^{\infty,\infty}\Lambda^{k}({\bbR^n})},
 \quad 
 \omega \in L^{\infty,\infty}\Lambda^{k}({\bbR^n}),
 \quad 
 S \in \overline{P^{k}(\bbR^{n})}.
\end{align}
This pairing furthermore extends to flat chains. We have 
\begin{align}
 \label{math:dualityestimate:flat}
 \left| \int_\alpha \omega \right|
 \leq 
 \| \alpha \|_{k,\flat} \| \omega \|_{L^{\infty,\infty}\Lambda^{k}({\bbR^n})},
 \quad 
 \alpha \in \calC^{\flat}_{k}(\bbR^{n}),
 \quad 
 \omega \in L^{\infty,\infty}\Lambda^{k}({\bbR^n}).
\end{align}
The exterior derivative between spaces of flat forms is dual
to the boundary operator between spaces of flat chains. We have 
\begin{align}
 \label{math:stokes}
 \int_{\partial\alpha} \omega
 =
 \int_{\alpha} \cartan\omega,
 \quad 
 \alpha \in \calC^{\flat}_{k}(\bbR^{n}),
 \quad 
 \omega \in L^{\infty,\infty}\Lambda^{k}({\bbR^n}),
\end{align}
as a generalized Stokes' theorem.

We consider pushforwards of chains and pullbacks of differential forms along Lipschitz mappings.
% Here we follow Paragraph 7 in Chapter X of \cite{whitney2012geometric}.
Assume that $\phi : {\bbR^m} \rightarrow {\bbR^n}$ is a Lipschitz mapping.
Then there exists a mapping $\phi_{\ast} : \calC^{\flat}_{k}({\bbR^m}) \rightarrow \calC^{\flat}_{k}({\bbR^n})$,
the \emph{pushforward} along $\phi$, such that
\begin{gather}
 \label{math:pushforward_commutes}
 \partial \phi_{\ast} \alpha
 =
 \phi_{\ast} \partial \alpha,
 \quad
 \alpha \in \calC^{\flat}_{k}({\bbR^m}),
 \\
 \label{math:pushforward_flatestimate}
 \| \phi_{\ast} \alpha \|_{k,\flat} 
 \leq
 \sup\{\Lip(\phi,\bbR^{m})^{k},\Lip(\phi,\bbR^{m})^{k+1}\} \| \alpha \|_{k,\flat},
 \quad
 \alpha \in \calC^{\flat}_{k}({\bbR^m}),
 \\
 \label{math:pushforward_massestimate}
 | \phi_{\ast} S |_{k} 
 \leq
 \Lip(\phi,\bbR^{m})^{k} | S |_{k},
 \quad
 S \in \overline{P^{k}({\bbR^m})}.
\end{gather}
% Dually, there exists a mapping $\phi^{\ast} : \calC_{\flat}^{k}({\bbR^m}) \rightarrow \calC_{\flat}^{k}({\bbR^n})$,
Dually, there exists a mapping $\phi^{\ast} : L^{\infty,\infty}\Lambda^{k}({\bbR^n}) \rightarrow L^{\infty,\infty}\Lambda^{k}({\bbR^m})$,
called the \emph{pullback} along $\phi$, which satisfies
\begin{gather}
 \label{math:pullback_commutes}
 \cartan \phi^{\ast} \omega
 =
 \phi^{\ast} \cartan \omega,
%  \quad
%  \omega \in L^{\infty,\infty}\Lambda^{k}({\bbR^m}),
 \\
 \label{math:pullback_flatestimate:itself}
 \| \phi_{\ast} \omega \|_{L^{\infty}\Lambda^{k}(\bbR^{n})}
 \leq
 \Lip(\phi,\bbR^{n})^{k} 
 \| \omega \|_{L^{\infty}\Lambda^{k}(\bbR^{n})},
 \\
 \label{math:pullback_flatestimate:derivative}
 \| \phi_{\ast} \cartan \omega \|_{L^{\infty}\Lambda^{k+1}(\bbR^{m})}
 \leq
 \Lip(\phi,\bbR^{n})^{k+1} 
 \| \cartan \omega \|_{L^{\infty}\Lambda^{k+1}(\bbR^{n})},
 % 
% \| \phi_{\ast} \omega \|_{L^{\infty,\infty}\Lambda^{k}(\bbR^{n})}
%  \leq
%  \binom{n}{k} \left( \Lip(\phi,\bbR^{n})^{k} + \Lip(\phi,\bbR^{n})^{k+1} \right)
%  \| \omega \|_{L^{\infty,\infty}\Lambda^{k}(\bbR^{m})},
%  \quad
%  \omega \in L^{\infty,\infty}\Lambda^{k}({\bbR^m})
%  .
 % 
\end{gather}
for $\omega \in L^{\infty,\infty}\Lambda^{k}({\bbR^n})$.
The pushforward and the pullback are related by the identity
\begin{align}
 \label{math:pushforward_pullback_duality}
 \int_{ \phi_\ast \alpha } \omega = \int_{\alpha} \phi^{\ast} \omega,
 \quad
 \omega \in L^{\infty,\infty}\Lambda^{k}({\bbR^n}),
 \quad
 \alpha \in \calC^{\flat}_{k}({\bbR^m})
 .
\end{align}

Having outlined basic concepts of geometric measure theory,
we provide a new result which makes these notions productive for finite element theory:
the degrees of freedom in finite element exterior calculus
are flat chains.

\begin{lemma}
 \label{prop:dof_are_chains}
 Let $F \subset {\bbR^n}$ be a closed oriented $m$-simplex and let $\eta \in C^{\infty}\Lambda^{m-k}(F)$.
 Then there exists a flat chain $\alpha_{\eta} \in C^{\flat}_{k}({\bbR^n})$ such that
 \begin{align}
  \label{prop:dof_are_chains:equality}
  \int_F \trace_F \omega \wedge \eta = \int_{\alpha_{\eta}} \omega,
  \quad
  \omega \in L^{\infty,\infty}\Lambda^{k}({\bbR^n}).
 \end{align}
 Moreover, $\alpha_{\eta} \in \overline{P^{k}({\bbR^n})}$
 and $\partial\alpha_{\eta} \in \overline{P^{k-1}({\bbR^n})}$.
\end{lemma}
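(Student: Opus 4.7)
The plan is to construct $\alpha_{\eta}$ explicitly by a slicing procedure, approximate it by polyhedral chains in the mass norm, and then verify the duality identity and the regularity of the boundary. The geometric picture is that pairing the $m$-simplex $F$ with an $(m-k)$-form $\eta$ should produce a weighted $k$-chain supported on $F$, obtained by integrating $F$ along the $m-k$ directions that are ``complementary'' to the directions dual to $\eta$.

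First I would choose an affine orthonormal coordinate system $y_{1},\dots,y_{m}$ on the $m$-plane spanned by $F$, extended to $\bbR^{n}$ by normal coordinates, and expand
\begin{align*}
 \eta = \sum_{\tau \in \Sigma(m-k,m)} f_{\tau}\, \cartan y^{\tau}, \qquad f_{\tau} \in C^{\infty}(F).
\end{align*}
By linearity, it suffices to produce $\alpha_{\eta}$ when $\eta = f \cartan y^{\tau}$ for a single $\tau$ and a smooth scalar $f$. Writing $\tau^{c} \in \Sigma(k,m)$ for the ordered complement and $\pi_{\tau}\colon \bbR^{n} \to \bbR^{m-k}$ for the projection onto the $y^{\tau}$ coordinates, a direct computation in adapted coordinates together with Fubini gives
\begin{align*}
 \int_{F} \trace_{F}\omega \wedge f \cartan y^{\tau}
 \;=\; \pm \int_{\pi_{\tau}(F)} \left( \int_{F \cap \pi_{\tau}^{\inv}(z)} \trace_{F\cap\pi_{\tau}^{\inv}(z)}\omega \cdot f(\cdot,z) \right) \cartan z,
\end{align*}
where the inner integral is the integration of $\omega$ over the $k$-dimensional polyhedral slice $S(z) := F \cap \pi_{\tau}^{\inv}(z)$ with the orientation induced from $F$ and the sign of the permutation $(\tau^{c},\tau)$.

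Next I would define, for constant $f\equiv c$, the polyhedral-chain-valued Bochner integral
\begin{align*}
 \alpha_{c\,\cartan y^{\tau}} := \pm c \int_{\pi_{\tau}(F)} S(z) \, \cartan z \;\in\; \overline{P^{k}(\bbR^{n})},
\end{align*}
which is well defined because $z \mapsto S(z)$ is piecewise affine with masses uniformly bounded by $\diam(F)^{k}$. For general smooth $f$, approximate $f$ uniformly by piecewise constant functions $f_{N}$ on refining simplicial subdivisions of $F$; the corresponding chains $\alpha_{f_{N}\cartan y^{\tau}}$, built by summing Step 2 over subpieces, form a Cauchy sequence in the mass norm whose limit I call $\alpha_{\eta}$. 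The duality relation \eqref{prop:dof_are_chains:equality} follows by passing to the limit using the bound \eqref{math:dualityestimate:polyhedral} on the flat-form side and uniform convergence of $f_{N}$ to $f$ on the integrand side.

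Finally, for the boundary statement, the Leibniz rule and classical Stokes on the simplex $F$ yield
\begin{align*}
 \int_{F} \trace_{F}(\cartan\omega) \wedge \eta
 \;=\; \int_{\partial F} \trace_{\partial F}\bigl(\trace_{F}\omega \wedge \eta\bigr)
 \;-\; (-1)^{k}\int_{F} \trace_{F}\omega \wedge \cartan\eta.
\end{align*}
The right-hand side is a finite sum of pairings of the same type as in the statement, but now either against the $(m-1)$-faces of $F$ paired with an $(m-k)$-form, or against $F$ itself paired with the $(m-k+1)$-form $\cartan\eta$. Applying the construction of Steps 1--3 to each such pairing produces chains in $\overline{P^{k-1}(\bbR^{n})}$; combined with the generalized Stokes identity \eqref{math:stokes}, this identifies $\partial\alpha_{\eta}$ with their signed sum, establishing $\partial\alpha_{\eta} \in \overline{P^{k-1}(\bbR^{n})}$.

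The principal obstacle is the rigorous treatment of Step~2: one must verify that the slice map $z \mapsto S(z)$ is measurable into $\overline{P^{k}(\bbR^{n})}$, that orientations transport correctly so that the sign conventions in the Fubini formula match those in \eqref{math:integralofNform}, and that the mass-Cauchy property of the subdivision approximation is uniform in $N$. Everything else is a routine application of the continuity bounds \eqref{math:dualityestimate:polyhedral}--\eqref{math:dualityestimate:flat} and the Stokes identity \eqref{math:stokes}.
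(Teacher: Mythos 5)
Your approach is genuinely different from the paper's. The paper reduces to the case $\dim F = n$, replaces $\eta$ by $\star\eta$ so that the pairing becomes $\int_F\langle\omega,\star\eta\rangle$, and then invokes Theorem 15A of Chapter~IX of Whitney as a black box to produce $\alpha_\eta$ with $|\alpha_\eta|_k=\|\star\eta\|_{L^1\Lambda^k(F)}$; the case $m<n$ is handled by pushforward along an isometric embedding, and the boundary claim by the same Stokes/Leibniz decomposition you use. You instead try to reconstruct the chain explicitly by Fubini and slicing, which is an attractive idea because it makes the object concrete rather than inherited from a cited theorem.

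The gap is exactly where you flag it, in Step~2, and it is more serious than a book-keeping issue about orientations. You want to define $\alpha_{c\,\cartan y^{\tau}}$ as a Bochner integral $\pm c\int_{\pi_\tau(F)} S(z)\,\dif z$ valued in $\overline{P^{k}(\bbR^{n})}$, the mass-norm completion of polyhedral chains, and you justify this by the uniform bound $|S(z)|_k\leq\diam(F)^{k}$. That bound gives integrability of the norm, but it does not give Bochner measurability of $z\mapsto S(z)$ into $\overline{P^{k}(\bbR^{n})}$, and in fact that map is essentially nowhere continuous in the mass norm: for $z\neq z'$ the slices $S(z)$ and $S(z')$ are carried by disjoint $k$-planes, so they are mutually singular with respect to $\calH^{k}$ and $|S(z)-S(z')|_{k}=|S(z)|_{k}+|S(z')|_{k}$, which does not shrink as $z'\to z$. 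For the same reason the Riemann sums $\sum_j|\Delta_j|\,S(z_j)$ are not mass-Cauchy under refinement (consecutive partitions produce slices that are disjoint from each other). So the integral as written does not converge in $\overline{P^{k}(\bbR^{n})}$. It does converge in the flat norm, since neighbouring slices can be connected by a thin $(k+1)$-polyhedron of small mass, so your construction produces an element of $\calC^{\flat}_{k}(\bbR^{n})$ satisfying \eqref{prop:dof_are_chains:equality}, and lower semicontinuity of mass under flat convergence then yields finite mass. But the ``moreover'' part of the lemma, namely $\alpha_\eta\in\overline{P^{k}(\bbR^{n})}$ and $\partial\alpha_\eta\in\overline{P^{k-1}(\bbR^{n})}$, does not follow from this argument; it is precisely the content that the paper imports from Whitney. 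To close this gap along your lines you would either have to reprove the relevant Whitney result (approximation of a ``continuous'' chain by polyhedral chains in the mass norm, which requires a genuinely different approximation scheme than Riemann sums of disjoint slices) or weaken your conclusion to ``flat chain of finite mass.'' The rest of your argument, in particular the reduction to a single monomial $\eta=f\,\cartan y^\tau$, the passage from constant to smooth $f$ via \eqref{math:dualityestimate:polyhedral}, and the boundary identity via the Leibniz rule, Stokes on $F$, and \eqref{math:stokes}, matches the paper's strategy and is sound once the basic construction is fixed.
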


\begin{proof}
 %%% If F is full dimensional
 We first assume that $\dim F = n$, and that $F$ is positively oriented.
 As is well-known, there exists $\star\eta \in C^{\infty}\Lambda^{k}(F)$
 such that 
 \begin{align*}
  \int_{F} \omega \wedge \eta
  =
  \int_{F} \langle \omega, \star \eta \rangle,
  \quad 
  \omega \in L^{\infty,\infty}\Lambda^{k}({\bbR^n}),
 \end{align*}
 and $\| \eta \|_{L^{1}\Lambda^{n-k}(F)} = \| \star\eta \|_{L^{1}\Lambda^{k}(F)}$.
%  Further, there exists a smooth $k$-vector field $\sharp\star\eta^{}$
%  in the sense of \cite[Chapter I]{whitney2012geometric} such that,
%  \begin{align*}
%   \int_{F} \langle \omega, \star \eta \rangle 
%   = 
%   \int_{F} \langle \omega, \sharp\star\eta \rangle, 
%   \quad 
%   \omega \in L^{\infty,\infty}\Lambda^{k}({\bbR^n})
%  \end{align*}
%  and $\| \star\eta \|_{L^{1}\Lambda^{k}(F)} = \| \sharp\star\eta \|_{L^{1}\Lambda^{k}(F)}$.
 We use Theorem 15A of \cite[Chapter IX]{whitney2012geometric} 
 to deduce the existence of $\alpha_{\eta} \in C^{\flat}_{k}({\bbR^n})$
 such that 
 \begin{align*}
  \int_{\alpha_{\eta}} \omega  
  = 
  \int_{F} \langle \omega, \iffalse \sharp \fi \star\eta \rangle, 
  \quad 
  \omega \in L^{\infty,\infty}\Lambda^{k}({\bbR^n}),
 \end{align*}
 and $|\alpha_{\eta}|_{k} = \| \iffalse \sharp \fi\star\eta \|_{L^{1}\Lambda^{k}(F)}$.
 In particular, $\alpha_{\eta} \in \overline{P^{k}({\bbR^n})}$.
 
 Now assume that $\dim F = m < n$. There exists a simplex $F_0 \subseteq \bbR^{m}$
 and an isometric inclusion $\phi : \bbR^{m} \rightarrow \bbR^{n}$
 which maps $F_0$ onto $F$. 
 Recall that the pullback of a flat form along a Lipschitz mapping is well-defined. 
%  By Theorem 2.2 of \cite{gol1982differential}, 
 We have 
 \begin{align*}
  \int_{F} \trace_{F} \omega \wedge \eta
  =
  \int_{\phi F_0} \trace_{F} \omega \wedge \eta
  =
  \int_{F_0} \phi^{\ast}\trace_{F} \omega \wedge \phi^{\ast}\eta
  =
  \int_{\alpha_{\phi^{\ast}\eta}} \phi^{\ast}\trace_{F} \omega
  =
  \int_{\phi_{\ast}\alpha_{\phi^{\ast}\eta}} \omega
 \end{align*}
 for $\omega \in L^{\infty,\infty}\Lambda^{k}({\bbR^n})$.
 Thus we may choose $\alpha_{\eta} = \phi_{\ast} \alpha_{\phi^{\ast}\eta} \in \overline{P^{k}({\bbR^n})}$.
 
 It remains to show that $\partial\alpha_{\eta} \in \overline{P^{k-1}({\bbR^n})}$.
 For $\omega \in L^{\infty,\infty}\Lambda^{k-1}({\bbR^n})$, we have
 \begin{align*}
  \int_{\partial\alpha_{\eta}} \omega 
  =
  \int_{\alpha_{\eta}} \cartan\omega 
  &=
  \int_{F} \eta \wedge \trace_F\cartan\omega 
  \\&=
  (-1)^{k+1}\int_{F} \cartan\eta \wedge \trace_F\omega
  +
  \sum_{ \substack{ G \in \calT^{m-1} \\ G \subset \partial F } }
  \int_{G} \trace_{G} \eta \wedge \trace_{G} \omega
  . 
 \end{align*}
 Here, the sum is taken over all faces of $F$ of dimension $m-1$,
 equipped with the outward orientation. 
 This is a sum of functionals as in the statement of the theorem.
 Since $\overline{P^{k-1}({\bbR^n})}$ is a Banach space,
 we have $\partial\alpha \in \overline{P^{k-1}({\bbR^n})}$.
 The proof is complete. 
\end{proof}

We finish this section with an estimate on the deformation of flat chains by Lipschitz mappings.
The following result is implied by Theorem 13A in Chapter~X % of \cite{whitney2012geometric}
and the discussions in Paragraphs 1 and~2 of Chapter~VIII in \cite{whitney2012geometric}.
% Our application is the proof of Theorem~\ref{prop:interpolation_error}.

\begin{lemma}
 \label{prop:deformationestimate}
 Let $F \subseteq \bbR^{n}$ be an $m$-simplex and let $\eta \in C^{\infty}\Lambda^{m-k}(F)$.
 Let $\alpha \in C^{\flat}_{k}(\bbR^{n})$ be the associated flat chain
 in the manner of Lemma~\ref{prop:dof_are_chains}.
 Let $U \subseteq \bbR^{n}$ be open and convex with $F \subset U$,
 and let $\phi : U \rightarrow \bbR^{n}$ be Lipschitz.
 Then
 \begin{align}
  \| \phi_{\ast} \alpha - \alpha \|_{k,\flat} 
  \leq 
  \| \phi - \Id \|_{L^{\infty}(U,\bbR^{n})}
  \left( l^{k} |\alpha|_{k} + l^{k-1} |\partial \alpha|_{k-1} \right),
 \end{align}
 where $l := \sup\{ \Lip(\phi,U), 1 \}$.
\end{lemma}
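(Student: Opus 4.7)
The plan is to use a prism homotopy construction at the chain level. Define the straight-line homotopy
\[
 H : U \times [0,1] \to \bbR^n, \qquad H(x,t) = (1-t)x + t\phi(x),
\]
which is well-defined because $\phi$ is defined on all of $U$. Since $\alpha \in \overline{P^{k}(\bbR^n)}$ and $\partial\alpha \in \overline{P^{k-1}(\bbR^n)}$ by Lemma~\ref{prop:dof_are_chains}, and both are supported in $F \subset U$, the product chains $\alpha \times [0,1]$ and $(\partial\alpha)\times[0,1]$ are polyhedral chains in $U \times \bbR$ and may be pushed forward by the Lipschitz map $H$ into $\bbR^n$.

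Applying the product boundary formula $\partial(\alpha\times[0,1]) = \alpha\times\{1\} - \alpha\times\{0\} - (-1)^{k}(\partial\alpha)\times[0,1]$ together with the commutativity of $\partial$ with the pushforward (equation \eqref{math:pushforward_commutes}) yields the homotopy identity
\[
 \phi_{\ast}\alpha - \alpha
 \;=\; \partial\bigl( H_{\ast}(\alpha \times [0,1]) \bigr)
 \pm H_{\ast}\bigl((\partial\alpha) \times [0,1]\bigr).
\]
Substituting $Q := H_{\ast}(\alpha\times[0,1])$ into the infimum \eqref{math:definition_flat_norm} defining the flat norm gives
\[
 \| \phi_{\ast}\alpha - \alpha \|_{k,\flat}
 \;\leq\;
 \bigl|H_{\ast}((\partial\alpha)\times[0,1])\bigr|_{k}
 + \bigl|H_{\ast}(\alpha\times[0,1])\bigr|_{k+1}.
\]

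The task thus reduces to the anisotropic mass estimate that, for any polyhedral $j$-chain $\beta \in \overline{P^{j}(\bbR^n)}$ supported in $U$,
\[
 \bigl|H_{\ast}(\beta\times[0,1])\bigr|_{j+1}
 \;\leq\;
 l^{\,j}\, \|\phi - \Id\|_{L^{\infty}(U,\bbR^n)}\, |\beta|_{j},
\]
applied with $j=k$ and $j=k-1$ to obtain the two summands in the claim. The intuition is that $H$ sweeps each $x$ along the segment from $x$ to $\phi(x)$, whose length is bounded by $\|\phi-\Id\|_{L^\infty}$, while each of the $j$ tangent directions of $\beta$ is stretched by at most $l$; the swept image is a thin $(j{+}1)$-dimensional prism of volume at most $l^{j}\|\phi-\Id\|_{L^\infty}|\beta|_{j}$.

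The principal obstacle is this sharp anisotropic bound. The naive Lipschitz estimate \eqref{math:pushforward_massestimate} applied to $H$ on $U\times[0,1]$ would contribute a factor $\Lip(H)^{j+1}$, yielding $l^{\,j+1}$ rather than the desired $l^{\,j}$. One must instead exploit that $\partial_t H(x,t) = \phi(x)-x$ has norm bounded by $\|\phi - \Id\|_{L^\infty}$ independently of $l$, while $\partial_x H(x,t) = (1-t)\Id + t\,\Jacobian\phi_x$ has norm at most $l$; a Hadamard-type bound on the $(j{+}1)$-parallelepipeds spanned by the images of the product tangent frame then yields the correct factorization on each polyhedral cell of $\beta$, extending to $\overline{P^{j}}$ by continuity of mass under pushforward. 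This is precisely the deformation estimate packaged in Theorem 13A of Chapter X of \cite{whitney2012geometric}, which we would invoke to conclude.
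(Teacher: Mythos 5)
The paper gives no proof of this lemma — it simply attributes the result to Theorem~13A of Chapter~X (and the preliminary discussion in Chapter~VIII) of \cite{whitney2012geometric}. Your proposal unfolds exactly the argument behind that citation: the straight-line homotopy $H(x,t)=(1-t)x+t\phi(x)$, the chain-level prism identity $\phi_{\ast}\alpha-\alpha=\partial\bigl(H_{\ast}(\alpha\times[0,1])\bigr)\pm H_{\ast}\bigl((\partial\alpha)\times[0,1]\bigr)$, the flat-norm bound obtained by taking $Q=H_{\ast}(\alpha\times[0,1])$, and the reduction to the anisotropic prism-mass estimate $|H_{\ast}(\beta\times[0,1])|_{j+1}\le l^{j}\,\|\phi-\Id\|_{L^{\infty}}\,|\beta|_{j}$. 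You correctly identify that the naive estimate \eqref{math:pushforward_massestimate} would only produce $\Lip(H)^{j+1}$ and lose the small factor $\|\phi-\Id\|_{L^{\infty}}$, and that one must separate the $t$-derivative $\partial_t H=\phi(x)-x$ (bounded by $\|\phi-\Id\|_{L^{\infty}}$) from the spatial derivative $\partial_x H=(1-t)\Id+t\Jacobian\phi$ (bounded by $l$) and use a Hadamard bound on the spanning frame; this is precisely the content of Whitney's deformation theorem, which you then invoke. So the proposal is correct and takes the same route as the paper, differing only in that it makes explicit the mechanism the paper leaves to the reference.
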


% V. Interpolator

\section{Finite Element Spaces, Degrees of Freedom, and Interpolation}
\label{sec:interpolator}

In this section we outline the discretization theory of finite element exterior calculus.
We summarize basic facts on the finite element spaces and their spaces of degrees of freedom.
The most important construction is the canonical finite element interpolator $I^{k}$.
Moreover we consider several inverse inequalities. 
The reader is assumed to be familiar with the background in \cite{afwgeodecomp} and \cite[Section 3--5]{AFW1}.
We outline this background and additionally apply geometric measure theory
in the perspective of the preceding section. 

For the duration of this section, 
we fix a bounded weakly Lipschitz domain $\Omega \subset \bbR^{n}$
and a finite triangulation $\calT$ of $\overline\Omega$. 
\\

The essential idea is to consider a differential complex of finite element spaces 
that mimics the de~Rham complex on a discrete level. 
The finite element spaces are finite-dimensional spaces of piecewise polynomial differential forms.

Let $T \in \calT^{n}$ be an $n$-simplex, and let $r,k \in \bbZ$.
We define $\calP_{r}\Lambda^{k}(T)$ as the space of differential $k$-forms
whose coefficients are polynomials over $T$ of degree at most $r$.
We define $\calP_{r}^{-}\Lambda^{k}(T) := \calP_{r-1}\Lambda^{k}(T) + \vecX \lrcorner \calP_{r-1}\Lambda^{k+1}(T)$,
where $\vecX\lrcorner$ denotes contraction with the source vector field $\vecX( x ) = x$. 
One can show that $\calP_{r}\Lambda^{k}(T)$ and $\calP_{r}^{-}\Lambda^{k}(T)$
are invariant under pullback by affine automorphisms of $T$. 
For any subsimplex $F \in \Delta(T)$ of $T$ we set 
\begin{align*}
 \calP_r\Lambda^{k}(F) = \trace_{T,F} \calP_r\Lambda^{k}(T),
 \quad 
 \calP^{-}_r\Lambda^{k}(F) = \trace_{T,F} \calP^{-}_r\Lambda^{k}(T),
\end{align*}
which do not depend on $T$. 
Some basic properties of these spaces are 
\begin{gather*}
 \calP_r\Lambda^{k}(T) \subseteq \calP_{r+1}^{-}\Lambda^{k}(T),
 \quad 
 \calP_r^{-}\Lambda^{k}(T) \subseteq \calP_{r}\Lambda^{k}(T),
 \\
 \cartan\calP_r\Lambda^{k}(T) \subseteq \calP_{r-1}\Lambda^{k+1}(T),
%  \quad
%  \cartan\calP_r^{-}\Lambda^{k}(T) \subseteq \calP_{r-1}\Lambda^{k+1}(T),
 \quad
 \cartan\calP_r\Lambda^{k}(T) = \cartan\calP_{r}^{-}\Lambda^{k}(T),
 \\ 
 \calP_r\Lambda^{0}(T) = \calP_{r}^{-}\Lambda^{0}(T),
 \quad 
 \calP_r\Lambda^{n}(T) = \calP_{r+1}^{-}\Lambda^{n}(T).
\end{gather*}
We define the finite element spaces 
\begin{align*}
 \calP_{r}\Lambda^{k}(\calT)
 &:=
 \left\{
 \omega \in L^{\infty,\infty}\Lambda^{k}(\Omega)
 \st 
 \forall T \in \calT^{n} : \omega_{|T} \in \calP_{r}\Lambda^{k}(T)
 \right\}
 ,
 \\
 \calP_{r}^{-}\Lambda^{k}(\calT)
 &:=
 \left\{
 \omega \in L^{\infty,\infty}\Lambda^{k}(\Omega)
 \st 
 \forall T \in \calT^{n} : \omega_{|T} \in \calP_{r}^{-}\Lambda^{k}(T)
 \right\}
 .
\end{align*}
These are spaces of piecewise polynomial differential forms.
The regularity $L^{\infty,\infty}\Lambda^{k}(\Omega)$
enforces that their members feature tangential continuity along simplex boundaries.
In particular, if $\omega \in \calP_r\Lambda^{k}(\calT)$
and $T,T' \in \calT$ are neighbouring simplices,
then the restrictions of $\omega$ to $T$ and $T'$
have the same trace on their common subsimplex $T \cap T'$.
We have recovered precisely the finite element spaces of \cite{AFW1}.

From $\calP_{r}\Lambda^{k}(\calT)$ and $\calP_{r}^{-}\Lambda^{k}(\calT)$
we can construct finite element de Rham complexes, 
but the combination of spaces is not arbitrary.
We single out a class of differential complexes 
that we call \emph{FEEC-complexes} in this article.
A \emph{FEEC-complex} is a differential complex 
\begin{align}
 \label{math:finiteelementcomplex}
 \begin{CD}
  0 \to
  \Lambda^{0}(\calT)
  @>\cartan>>
  \Lambda^{1}(\calT)
  @>\cartan>>
  \cdots
  @>\cartan>>
  \Lambda^{n}(\calT)
  \to 0
 \end{CD}
\end{align}
such that for all $k \in \bbZ$ there exists $r \in \bbZ$ with 
% provided that for all $k \in \bbZ$ there exists $r \in \bbZ$ such that 
\begin{align}
 \Lambda^{k  }(\calT) \in \left\{ \calP_{r  }\Lambda^{k  }(\calT), \calP_{r}^{-}\Lambda^{k  }(\calT) \right\}
\end{align}
and that for all $k \in \bbZ$ we have 
\begin{align}
 \begin{split}&
 \Lambda^{k  }(\calT) \in \left\{ \calP_{r  }\Lambda^{k  }(\calT), \calP_{r}^{-}\Lambda^{k  }(\calT) \right\}
 \\&\qquad
 \implies 
 \Lambda^{k+1}(\calT) \in \left\{ \calP_{r-1}\Lambda^{k+1}(\calT), \calP_{r}^{-}\Lambda^{k+1}(\calT) \right\}.
 \end{split}
\end{align}
These are the finite element de~Rham complexes discussed in \cite{AFW1}.

We next introduce the degrees of freedom of finite element exterior calculus.
They are represented by taking the trace of a differential form
onto a simplex of $\calT$ and then integrating against a smooth differential form.
By virtue of Lemma~\ref{prop:dof_are_chains},
we introduce the degrees of freedom as chains of finite mass.
% but take into account that these can interpreted as flat chains.
% 
% Indeed, the canonical degrees of freedom in finite element exterior calculus
% can be identified with polyhedral chains
% by virtue of Lemma \ref{prop:dof_are_chains}.
% 
Specifically, when $F \in \calT$ and $m = \dim(F)$, then we define 
\begin{align*}
 \calP_r\calC_k^{F}
 &:=
 \left\{
  S \in \calC_k^{\flat}(\bbR^{n}) 
  \;\bigg|\;
  \exists \eta_S \in \calP_{r+k-m}^{-}\Lambda^{m-k}(F) : 
  \int_S \cdot = \int_F \eta_S \wedge \cdot
 \right\},
 \\
 \calP_r^{-}\calC_k^{F}
 &:=
 \left\{
  S \in \calC_k^{\flat}(\bbR^{n}) 
  \;\bigg|\;
  \exists \eta_S \in \calP_{r+k-m-1}\Lambda^{m-k}(F) : 
  \int_S \cdot = \int_F \eta_S \wedge \cdot
 \right\}.
\end{align*}
We furthermore obtain by Lemma~\ref{prop:dof_are_chains}
that the degrees of freedom are flat chains
of finite mass with boundaries of finite mass.
One can show that we have direct sums  
\begin{align*}
 \calP_r\calC_k(\calT)
 :=
 \sum_{ F \in \calT } \calP_r\calC_k^{F},
 \quad 
 \calP_r^{-}\calC_k(\calT)
 :=
 \sum_{ F \in \calT } \calP_r^{-}\calC_k^{F}
\end{align*}
and that we have the inclusions 
% Moreover, these flat chains have boundaries of finite mass.
% One can show that  
\begin{align*}
 \partial \calP_r\calC_k(\calT)
 \subseteq 
 \calP_{r+1}^{-}\calC_{k-1}(\calT),
  \quad 
 \calP_r\calC_k(\calT)
 \subseteq 
 \calP^{-}_{r+1}\calC_k(\calT),
  \quad 
 \calP^{-}_{r}\calC_k(\calT)
 \subseteq 
 \calP_{r}\calC_k(\calT)
 . 
\end{align*}
With respect to a given FEEC-complex \eqref{math:finiteelementcomplex},
we then define 
% for some choice of finite element spaces. 
% With respect to this fixed complex, we define $\calC_{k}(\calT)$ by 
\begin{align}
  \calC_k(\calT) = \left\{
 \begin{array}{rl}
  \calP_r\calC_k(\calT) & \text{ if } \Lambda^{k}(\calT) = \calP_r\Lambda^{k}(\calT),
  \\
  \calP_r^{-}\calC_k(\calT) & \text{ if } \Lambda^{k}(\calT) = \calP_r^{-}\Lambda^{k}(\calT).
 \end{array}
 \right.
\end{align}
% \begin{align*}
%  \calC_k(\calT)
%  :=
%  \sum_{ F \in \calT } \calC_k^{F},
%  \quad 
%  \calC_k^{F}
%   := \left\{
%  \begin{array}{rl}
%   \calP_r\calC_k^{F} & \text{ if } \Lambda^{k}(\calT) = \calP_r(\calT),
%   \\
%   \calP_r^{-}\calC_k^{F} & \text{ if } \Lambda^{k}(\calT) = \calP_r^{-}(\calT),
%  \end{array}
%  \right.
% \end{align*}
for $k \in \bbZ$. 
Note that $\partial \calC_{k+1}(\calT) \subseteq \calC_{k}(\calT)$ by construction.
% , and furthermore 
% \begin{align}
%   \calC_k(\calT) = \left\{
%  \begin{array}{rl}
%   \calP_r\calC_k(\calT) & \text{ if } \Lambda^{k}(\calT) = \calP_r(\calT),
%   \\
%   \calP_r^{-}\calC_k(\calT) & \text{ if } \Lambda^{k}(\calT) = \calP_r^{-}(\calT).
%  \end{array}
%  \right.
% \end{align}
We have a well-defined complex of degrees of freedom
\begin{align}
 \label{math:degreesoffreedomcomplex}
 \begin{CD}
  0 \gets
  \calC_{0}(\calT)
  @<\partial<<
  \calC_{1}(\calT)
  @<\partial<<
  \cdots
  @<\partial<<
  \calC_{n}(\calT)
  \gets 0.
 \end{CD}
\end{align}
We can prove a duality between the finite element complex \eqref{math:finiteelementcomplex}
and the complex of degrees of freedom \eqref{math:degreesoffreedomcomplex},
following \cite[Section 5]{afwgeodecomp}.
One can show that
\begin{subequations}
\label{math:discreteduality}
\begin{align}
 \label{math:discreteduality:feelfemspace}
 \forall S \in \calC_k(\calT) : 
 S \neq 0 \implies 
 \exists \omega \in \Lambda^k(\calT) : 
 \int_S \omega \neq 0,
 \\
 \label{math:discreteduality:feeldofspace}
 \forall \omega \in \Lambda^k(\calT) : 
 \omega \neq 0 \implies 
 \exists S \in \calC_k(\calT) : 
 \int_S \omega \neq 0.
\end{align}
\end{subequations}
We conclude that $\calC_k(\calT)$,
restricted to $\Lambda^{k}(\calT)$, spans the dual space of $\Lambda^{k}(\calT)$.
Notably, the last implication can be strengthened to the following ``local'' result.
When $T \in \calT^{n}$ and $\omega \in \Lambda^{k}(\calT)$, then 
\begin{align}
 \label{math:dof:dualspace}
%  \forall T \in \calT^{n} : 
%  \forall \omega \in \Lambda^k(\calT) :
%  \left(
 \omega_{|T} = 0
 \quad\equivalent\quad 
 \forall F \in \Delta(T) : 
 \forall S \in \calC_k^{F} : 
 \int_S \omega = 0
 .
%  \right),
%  \\
%  \forall T \in \calT^{n} : 
%  \forall \omega \in \calP_r\Lambda^k(\calT) :
%  \left(
%  \omega_{|T} = 0
%  \quad\equivalent\quad 
%  \forall F \in \Delta(T) : 
%  \forall S \in \calP_r\calC_k^{F} : 
%  \int_S \omega = 0
%  \right),
%  \\
%  \forall T \in \calT^{n} : 
%  \forall \omega \in \calP_r\Lambda^k(\calT) :
%  \left(
%  \omega_{|T} = 0
%  \quad\equivalent\quad 
%  \forall F \in \Delta(T) : 
%  \forall S \in \calP_r\calC_k^{F} : 
%  \int_S \omega = 0
%  \right)
 %
\end{align}
So the value of $\omega \in \Lambda^{k}(\calT)$ is determined uniquely
by the values of the degrees of freedom associated with that simplex.
% TODO: Umgekehrt?

\begin{remark}
 At this point it is helpful to recall the role of degrees of freedom in finite element theory.
 On the one hand, 
 they are functionals which span the dual space of a finite element space,
 and on the other hand, 
 the degrees of freedom are used in the construction of the canonical finite element interpolator.
 % Hence the degrees of freedom are defined on differential forms with less than full continuity
 % along interelement boundaries, and on continuous differential forms.
 Corresponding to these two applications, we treat the degrees of freedom
 as functionals both over $L^{\infty,\infty}\Lambda^{k}(\Omega^{e})$
 and $C\Lambda^{k}(\Omega^{e})$,
 and we define the canonical finite element interpolator over both spaces.
 % As a guideline we propose that the degrees of freedom should be defined both on $C\Lambda^{k}(\Omega^{e})$
 % and $L^{\infty,\infty}\Lambda^{k}(\Omega^{e})$.
 This is possible because the degrees of freedom are flat chains of finite mass.
%  This is reflected below in our treatment of the canonical finite element interpolator.
\end{remark}

We introduce the \emph{canonical finite element interpolator}.
This linear mapping is well-defined and bounded both over $C\Lambda^{k}(\Omega^{e})$
and $L^{\infty,\infty}\Lambda^{k}(\Omega^{e})$.
We define  
\begin{align}
 \label{math:feinterpolator}
 I^{k} : 
 C\Lambda^{k}(\Omega^{e}) + L^{\infty,\infty}\Lambda^{k}(\Omega^{e})
 \rightarrow 
 \Lambda^{k}(\calT)
\end{align}
by requiring that 
\begin{align}
 \label{math:feinterpolator:definingcondition}
%  \forall  : 
 \int_S \omega = \int_S I^{k}\omega,
 \quad 
 S \in \calC_{k}(\calT),
 \quad 
 \omega \in C\Lambda^{k}(\Omega^{e}) + L^{\infty,\infty}\Lambda^{k}(\Omega^{e})
 .
\end{align}
The finite element interpolator commutes with the exterior derivative,
which follows easily from \eqref{math:feinterpolator:definingcondition} and \eqref{math:stokes}. 
We have 
\begin{align}
 \int_S I^{k+1} \cartan \omega 
 =
 \int_S \cartan \omega 
 =
 \int_{\partial S} \omega 
 =
 \int_{\partial S} I^{k} \omega 
 =
 \int_{S} \cartan I^{k} \omega 
\end{align}
% \begin{align}
%  S\left( I^{k+1} \cartan \omega \right)
%  =
%  S\left( \cartan \omega \right)
%  =
%  (\partial S)\left( \omega \right)
%  =
%  (\partial S)\left( I^{k}\omega \right)
%  =
%  S\left( \cartan I^{k+1} \omega \right)
% \end{align}
for all $\omega \in L^{\infty,\infty}\Lambda^{k}(\Omega^{e})$ and $S \in \calC_{k+1}(\calT)$.
In particular, the following diagram commutes:
\begin{align}
 \label{math:interpolationoperatorcommutingdiagram}
 \begin{CD}
  \cdots
  @>>>
  L^{\infty,\infty}\Lambda^{k}(\Omega^{e})
  @>{\cartan}>>
  L^{\infty,\infty}\Lambda^{k+1}(\Omega^{e})
  @>>>
  \cdots
  \\
  @.
  @V{I^{k}}VV
  @VI^{k+1}VV
  @.
  \\
  \cdots
  @>>>
  \Lambda^{k}(\calT) 
  @>{\cartan}>> 
  \Lambda^{k+1}(\calT) 
  @>>> 
  \cdots
 \end{CD}
\end{align}
Furthermore $I^{k}$ is idempotent, i.e.\ 
\begin{align}
 \label{math:feinterpolator:idempotent}
 I^{k} \omega = \omega,
 \quad 
 \omega \in \Lambda^{k}(\calT)
 ,
\end{align}
as follows directly from \eqref{math:dof:dualspace}.
% 
% Recall that finite element space is a subspace of the flat differential forms.
% By \eqref{math:dof:dualspace} we have 
% \begin{align}
%  \label{math:feinterpolator:idempotent}
%  I^{k} \omega = \omega,
%  \quad 
%  \omega \in \Lambda^{k}(\calT).
% \end{align}
% So $I^{k}$ is idempotent. 
\\

In the remainder of this section, we introduce a number of \emph{inverse inequalities}.
These rely on the equivalence of norms over finite-dimensional vector spaces.

We note that, by construction, the pullbacks $A_T^{\ast} \omega_{|T}$
lie in a common finite-dimensional vector space as $\omega \in \Lambda^{k}(T)$ and $T \in \calT^{n}$ vary.
For example, this can be a fixed space of differential forms with polynomial coefficients 
of sufficiently high degree.
Hence for each $p \in [1,\infty]$ there exists a constant $C_{\flat,p,n} > 0$ such that
\begin{align}
 \label{math:inverseinequality:fe}
 \| A_T^{\ast} \omega \|_{L^{\infty,\infty}\Lambda^{k}(\Delta^{n})}  
 \leq C_{\flat,p,n}
 \| A_T^{\ast} \omega \|_{L^{p}\Lambda^{k}(\Delta^{n})},  
 \quad 
 \omega \in \Lambda^{k}(\calT),
 \quad 
 T \in \calT^{n}.
\end{align}
The constant $C_{\flat,p,n}$ depends only on $n$ and the maximal polynomial degree
in the finite element de Rham complex.

Another inverse inequality applies to the degrees of freedom.
By Lemma~\ref{prop:dof_are_chains}, each degree of freedom can be identified with a flat chain of finite mass
whose boundary is again a flat chain of finite mass. 
In general, the boundary operator is an unbounded operator
as a mapping between spaces of polyhedral chains with respect to the mass norm.
But in the present setting, the pushforward of the degrees of freedom 
onto the reference simplex takes values in a finite-dimensional vector space. 
We conclude that there exists $C_{\partial} > 0$ such that 
\begin{align}
 \label{math:inverseinequality:dof}
 | A_{T\ast}^{\inv} \partial S |_{k-1}  
 \leq 
 C_{\partial} 
 | A_{T\ast}^{\inv} S |_{k},
 \quad 
 S \in \calC_k^{F},
 \quad
 F \in \Delta(T),
 \quad
 T \in \calT^{n}.
\end{align}
Again, the constant $C_{\partial}$ depends only on $n$ and the maximal polynomial degree
in the finite element de Rham complex.

Finally, we observe that there exists $C_I > 0$ such that 
\begin{align}
 \label{math:feminterpolator:micro}
 \| A_{T}^{\ast} I^{k} \omega \|_{L^{\infty}\Lambda^{k}(\Delta^{n})}
 \leq 
 C_I 
 \sup_{ \substack{ F \in \Delta(T) \\ S \in \calC^{F}_{k} } }
 |A_{T\ast} S|^{\inv}_{k}
 \int_{ A_{T\ast} S} A_{T}^{\ast} \omega
%  ,
%  \quad 
%  T \in \calT^{n},
%  \quad 
%  \omega \in C\Lambda^{k}(\Omega).
\end{align}
for all $T \in \calT^{n}$ and $\omega \in C\Lambda^{k}(\Omega)$.
Similar as before, the constant $C_{\partial}$ depends only on $n$ and the maximal polynomial degree
in the finite element de Rham complex.
Note that this inequality immediately implies 
\begin{align}
 \label{math:feminterpolator:simplexbound}
 \| A_{T}^{\ast} I^{k} \omega \|_{L^{\infty}\Lambda^{k}(\Delta^{n})}
 &\leq C_I 
 \| A_{T}^{\ast} \omega \|_{C\Lambda^{k}(\Delta^{n})},
 \quad 
 \omega \in C\Lambda^{k}(T).
\end{align}
% So the finite element interpolator satisfies a local bound on the reference simplex.
To see why \eqref{math:feminterpolator:micro} holds true, recall that 
$A_T^{\ast} I^{k} A_T^{-\ast}$ defines a linear mapping from $C\Lambda^{k}(A_T^{\inv}\Omega)$
onto a space of polynomial differential forms over the reference simplex $\Delta^{n}$.
By construction, 
\begin{align}
 \int_{ A_{T \ast}^{\inv} S } A_T^{\ast} I^{k} A_T^{-\ast} \omega 
 =
 \int_{ S } I^{k} A_T^{-\ast} \omega 
 =
 \int_{ S } A_T^{-\ast} \omega 
 =
 \int_{ A_T^{-\ast} S } \omega 
\end{align}
when $S \in \calC_{k}(\calT)$ and $A_T^{-\ast}\omega \in C\Lambda^{k}(\Omega)$.
Since the pushforwards of degrees of freedom and the pullbacks of finite element differential forms
to the reference simplex vary within finite dimensional vector spaces,
the existence of $C_I > 0$ follows. 

\begin{remark}
 The existence of constants $C_{\flat,p,n}$, $C_{\partial}$, and $C_I$ as above follows trivially
 if the triangulation $\calT$ and the sequences \eqref{math:finiteelementcomplex} and \eqref{math:degreesoffreedomcomplex}
 are fixed.
 But in applications we consider families of triangulations with associated   
 sequences \eqref{math:finiteelementcomplex} and \eqref{math:degreesoffreedomcomplex},
 and demand uniform bounds for those constants.
 Such uniform bounds hold if the triangulations have uniformly bounded shape constants
 and the finite element spaces have uniformly bounded polynomial degree. 
 The results of this article do not attend to estimates that are uniform
 in the polynomial degree, as would be relevant for $p$- and $hp$-methods.
\end{remark}

% If the triangulation $\calT$ and the sequences \eqref{math:finiteelementcomplex} and \eqref{math:degreesoffreedomcomplex}
% are fixed, then the existence of constants $C_{\flat,p,n}$, $C_{\partial}$, and $C_I$ 
% in the above inequalities follows trivially.
% But in applications, we are interested in uniform bounds for those constants
% when considering families of triangulations or 
% families of sequences of the form \eqref{math:finiteelementcomplex} and \eqref{math:degreesoffreedomcomplex}.
% Such uniform bounds hold true 
% if the triangulations have uniformly bounded shape constants
% and the finite element spaces have uniformly bounded polynomial degree. 

% % % VI. Regularizer
% % \input{regularizer_wlpub}
% VI. Smoothed Projection

\section{Smoothed Projection}
\label{sec:projection}

In this section,
we construct the smoothed projection in several stages.
First, we devise an extension operator $E^{k}$,
applying the two-sided Lipschitz collar discussed in Section~\ref{sec:geometry}.
We then formulate a mollification operator $R^{k}_{\eps \mathtt h}$,
where we use a smooth mesh size function $\mathtt h$ as an auxiliary construction.
Successive composition with the canonical finite element interpolator $I^{k}$
from Section~\ref{sec:interpolator}
yields an uniformly bounded commuting mapping $Q^{k}_{\eps}$,
the \emph{smoothed interpolator},
from differential forms with coefficients in $L^{p}$
onto finite element differential forms.
$Q^{k}_{\eps}$ is generally not idempotent on the finite element space,
but the interpolation error can be controlled. 
% In order to make this operation a projection,
% which acts as the identity on the finite element space,
% we require an estimate of the interpolation error
After a small modification, 
we obtain the desired smoothed projection $\pi^{k}_{\eps}$.
\\

Throughout this section we assume that $\Omega \subseteq \bbR^{n}$ 
is a bounded simply-connected weakly Lipschitz domain
and that $\calT$ is a finite triangulation of $\overline\Omega$.
We additionally assume that we have fixed 
a FEEC-complex \eqref{math:finiteelementcomplex}
% FIXME: Determine what we assume additionally 
and a corresponding complex of degrees of freedom \eqref{math:degreesoffreedomcomplex}.
In the sequel, we adhere to the convention of stating each result 
accompanied by explicit estimates of the various constants and parameter ranges.
We call a quantity \emph{uniformly bounded}
if it can be bounded in terms of the shape-constant, 
the geometric ambient, and the polynomial degree of the finite element space.

\subsection{Extension}
\label{subsec:extension}

Since $\Omega$ is a bounded weakly Lipschitz domain,
we may apply Theorem~\ref{prop:closedtwosidedcollar} 
to fix a compact neighborhood $\calC\Omega$ of $\partial\Omega$ in $\bbR^n$
and a bi-Lipschitz mapping
\begin{align*}
 \Psi : \partial\Omega \times [-1,1] \rightarrow \calC\Omega
\end{align*}
such that $\Psi(x,0) = x$ for $x \in \partial\Omega$,
and such that 
\begin{gather*}
 \Psi\left( \partial\Omega \times [-1,0) \right) = \calC\Omega \cap \Omega,
 \quad
 \Psi\left( \partial\Omega \times (0,1] \right) = \calC\Omega \cap \overline\Omega^c.
\end{gather*}
Additionally we write
\begin{align}
 \label{math:definitionofcollarparts}
 \calC^-\Omega := \calC\Omega \cap \Omega,
 \quad
 \calC^+\Omega := \calC\Omega \cap \overline\Omega^c,
 \quad
 \Omega^{e} := \overline\Omega \cup \calC^+\Omega 
\end{align}
for the \emph{interior collar part} $\calC^-\Omega$,
the \emph{exterior collar part} $\calC^+\Omega$,
and the \emph{extended domain} $\Omega^{e}$, respectively.
Eventually, we have a well-defined bi-Lipschitz mapping
\begin{align}
 \label{math:definitioncoordinatereflection}
 \calA : \calC^{+}\Omega \rightarrow \calC^{-}\Omega,
 \quad
 \Psi(x,t) \mapsto \Psi(x,-t)
\end{align}
from the outer collar part into the inner collar part,
called \emph{collar reflection}.
\\

We define the extension operator using the pullback along 
the collar reflection. % $\calA : \calC^{+}\Omega \rightarrow \calC^{-}\Omega$.
% which we have defined previously in Section~\ref{sec:geometry}.
If $\omega \in M\Lambda^{k}(\Omega)$ is a locally integrable $k$-form over $\Omega$,
then 
\begin{align}
 \label{math:definitionextensionoperator}
 E^{k} \omega 
 := \left\{
 \begin{array}{rl}
%   \omega(x) 
%   & \text{ if } x \in \Omega,
%   \\
%   \calA^{\ast} \omega(x)
%   & \text{ if } x \in \calC^{+}\Omega,
  \omega  
  & \text{ over } \Omega,
  \\
  \calA^{\ast} \omega
  & \text{ over } \calC^{+}\Omega,
 \end{array}\right.
\end{align}
is the locally integrable differential $k$-form constructed by extending $\omega$
onto $\calC^{+}\Omega$ using the pullback along $\calA$.
We first show that the linear mapping $E^{k}$
satisfies local estimates:

\begin{lemma}
 \label{prop:extensionestimate}
 Let $p \in [1,\infty]$. 
 We have a bounded linear operator
 \begin{align*}
  E^{k} : L^{p}\Lambda^{k}(\Omega) \rightarrow L^{p}\Lambda^{k}(\Omega^{e}),
  \quad
  \omega \mapsto E^{k} \omega
  .
 \end{align*}
 Moreover, there exists $C_{\calA,p} > 0$,
 depending only on $\calA$ and $p$, such that
 for $0 \leq s \leq t \leq 1$ and ${G} \subseteq \partial\Omega$ closed 
 we have 
 \begin{align}
  \label{math:extensionoperator:localbound}
  \| E^{k} \omega \|_{ L^{p}\Lambda^{k}\Psi({G}, [s,t] ) }
  \leq
  C_{\calA,p}
  \| \omega \|_{ L^{p}\Lambda^{k}\Psi({G}, [-t,-s] ) },
  \quad 
  \omega \in L^{p}\Lambda^{k}(\Omega).
 \end{align}
%  for $0 \leq s \leq t \leq 1$ and ${G} \subseteq \partial\Omega$ closed.
%  
 \textit{Constants: we may assume that $C_{\calA,q} \leq C_{\calA,p}$ for $1 \leq p \leq q \leq \infty$.}
\end{lemma}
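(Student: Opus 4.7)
The plan is to reduce both assertions to the pullback estimate of Theorem~\ref{prop:pullbackestimate} applied to the bi-Lipschitz collar reflection $\calA$. Over the interior piece $\Omega \subseteq \Omega^{e}$, the extension acts as the identity, so the only nontrivial bound is on the exterior collar $\calC^{+}\Omega$, where $E^{k}\omega = \calA^{\ast}\omega$. Since $\calA$ is bi-Lipschitz and both $\calC^{+}\Omega$ and $\calC^{-}\Omega$ can be taken simply connected (the latter by shrinking the collar if necessary), Theorem~\ref{prop:pullbackestimate} applies and yields
\begin{align*}
 \| E^{k}\omega \|_{L^{p}\Lambda^{k}(\calC^{+}\Omega)}
 \;\leq\;
 \| \Jacobian\calA \|^{k}_{L^{\infty}(\calC^{+}\Omega)}
 \| \Jacobian\calA^{\inv} \|^{n/p}_{L^{\infty}(\calC^{-}\Omega)}
 \| \omega \|_{L^{p}\Lambda^{k}(\calC^{-}\Omega)}.
\end{align*}
Combining this with the trivial identity $\| E^{k}\omega \|_{L^{p}\Lambda^{k}(\Omega)} = \|\omega\|_{L^{p}\Lambda^{k}(\Omega)}$ produces the global bound on $\Omega^{e}$ and establishes the first assertion.

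For the localized estimate \eqref{math:extensionoperator:localbound}, observe that by construction of the reflection \eqref{math:definitioncoordinatereflection} we have the set identity
\begin{align*}
 \calA\bigl( \Psi({G},[s,t]) \bigr) \;=\; \Psi({G},[-t,-s])
\end{align*}
for any closed ${G}\subseteq\partial\Omega$ and $0\le s\le t\le 1$. Since these slabs (with the endpoint $0$ discarded, which has zero Lebesgue measure) are open in $\calC^{+}\Omega$ respectively $\calC^{-}\Omega$ and $\calA$ restricts to a bi-Lipschitz map between them, Theorem~\ref{prop:pullbackestimate} applies to this restriction. One obtains
\begin{align*}
 \| E^{k}\omega \|_{L^{p}\Lambda^{k}\Psi({G},[s,t])}
 \;\leq\;
 \| \Jacobian\calA \|^{k}_{L^{\infty}(\calC^{+}\Omega)}
 \| \Jacobian\calA^{\inv} \|^{n/p}_{L^{\infty}(\calC^{-}\Omega)}
 \| \omega \|_{L^{p}\Lambda^{k}\Psi({G},[-t,-s])},
\end{align*}
which is exactly \eqref{math:extensionoperator:localbound} upon defining the constant. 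The bounds on the Jacobians are finite by \eqref{math:lipschitzdifferential:bounds} because $\calA$ is bi-Lipschitz.

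To secure the monotonicity claim that $C_{\calA,q}\le C_{\calA,p}$ whenever $1\le p\le q\le\infty$, set
\begin{align*}
 C_{\calA,p}
 \;:=\;
 \| \Jacobian\calA \|^{k}_{L^{\infty}(\calC^{+}\Omega)}
 \cdot
 \max\bigl( 1,\; \| \Jacobian\calA^{\inv} \|_{L^{\infty}(\calC^{-}\Omega)} \bigr)^{n/p}.
\end{align*}
Replacing $\| \Jacobian\calA^{\inv}\|_{L^{\infty}}$ by its maximum with $1$ can only enlarge the bound above, so this $C_{\calA,p}$ still dominates the pullback constant. Because the base is $\ge 1$, the map $p \mapsto (\text{base})^{n/p}$ is nonincreasing (with the convention $n/\infty = 0$), which gives $C_{\calA,q}\le C_{\calA,p}$ for $p\le q$. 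No real obstacle is expected; the only subtlety is ensuring simple-connectedness of the collar pieces so that Theorem~\ref{prop:pullbackestimate} is applicable, which can be arranged by passing to a possibly thinner collar neighborhood in the construction of $\Psi$.
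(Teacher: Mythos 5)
Your proof is correct and follows essentially the same route as the paper: both reduce everything to Theorem~\ref{prop:pullbackestimate} applied to the bi-Lipschitz collar reflection $\calA$, using the uniform $L^{\infty}$ bounds on $\Jacobian\calA$ and $\Jacobian\calA^{\inv}$ over the collar parts $\calC^{\pm}\Omega$, and noting that $\calA$ carries the slab $\Psi(G,[s,t])$ onto $\Psi(G,[-t,-s])$. One small caution: your worry about simple connectedness is based on a misreading of the hypotheses. Theorem~\ref{prop:pullbackestimate} only requires $U,V$ to be open; it is Lemmas~\ref{prop:transformationssatz} and~\ref{prop:pullback:Lpq} that carry the simply-connectedness assumption. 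Moreover, your proposed remedy of passing to a thinner collar would not help, since $\calC^{\pm}\Omega$ is lipeomorphic to $\partial\Omega\times(0,1]$ and so has the same fundamental group as $\partial\Omega$, which is typically nontrivial for a bounded domain. Fortunately nothing in the argument hinges on this point. On the positive side, your explicit choice $C_{\calA,p}=\|\Jacobian\calA\|^{k}_{L^{\infty}}\cdot\max\bigl(1,\|\Jacobian\calA^{\inv}\|_{L^{\infty}}\bigr)^{n/p}$ is a clean way to secure the monotonicity claim $C_{\calA,q}\le C_{\calA,p}$ for $p\le q$, which the paper asserts but does not spell out.
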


\begin{proof}
 Let $p \in [1,\infty]$,
 let ${G} \subseteq \partial\Omega$ be closed,
 let $0 \leq s \leq t \leq 1$,
 and let $\omega \in L^{p}\Lambda^{k}(\Omega)$.
 We apply Lemma~\ref{prop:pullbackestimate} to find 
 \begin{align*}
  &\| E^{k} \omega \|_{ L^{p}\Lambda^{k}\Psi({G}, [s,t] ) }
  =
  \| \calA^{\ast} \omega \|_{ L^{p}\Lambda^{k}\Psi({G}, [s,t] ) }
  \\&\quad\quad\leq
%   \binom{n}{k}^{\frac{1}{p}} (n!)^{\frac{1}{p}}
  \| \Jacobian \calA \|^{k}_{L^{\infty}(\calC^{+}\Omega)}
  \| \Jacobian \calA^{\inv} \|^{\frac{n}{p}}_{L^{\infty}(\calC^{-}\Omega)}
  \| \omega \|_{ L^{p}\Lambda^{k}\Psi({G}, [-t,-s] ) }
  .
 \end{align*}
 Hence \eqref{math:extensionoperator:localbound} 
 holds for some $C_{\calA,p} > 0$.
 For ${G} \times [s,t] = \partial\Omega \times [0,1]$
 we find  
 \begin{align*}
  \| E^{k} \omega \|_{L^{p}\Lambda^{k}(\Omega^{e})}
  &\leq 
  \| \omega \|_{L^{p}\Lambda^{k}(\Omega)}
  +
  \| E^{k} \omega \|_{L^{p}\Lambda^{k}(\calC^{+}\Omega)}
  \\&\leq 
  \| \omega \|_{L^{p}\Lambda^{k}(\Omega)}
  +
  C_{\calA,p} 
  \| \omega \|_{L^{p}\Lambda^{k}(\calC^{-}\Omega)}
  \\&\leq 
  \left( 1 + C_{\calA,p} \right) 
  \| \omega \|_{L^{p}\Lambda^{k}(\Omega)}
  , 
 \end{align*}
 so $E^{k}$ is bounded 
 from $L^{p}\Lambda^{k}(\Omega)$ to $L^{p}\Lambda^{k}(\Omega^{e})$.
\end{proof}

The local bound in the preceding lemma can be refined:

\begin{lemma}
 \label{prop:extensionestimate:refined}
 There exist 
 $\delta_0 > 0$
 and
 $L_{\Psi} \geq 1$, depending only on $\Psi$,
 such that 
 for all $\delta \in [0, \delta_0 )$, $p \in [1,\infty]$,
 and all closed sets $A \subset \overline\Omega$
 we have 
 \begin{align}
  \| E^{k} \omega \|
  _{L^p\Lambda^{k}\left( B_{\delta}(A) \cap \Omega^{e} \right) }
  \leq 
  \left( 1 + C_{\calA,p} \right)
  \| \omega \|
  _{L^p\Lambda^{k}\left( B_{L_{\Psi} \delta}(A) \cap \overline\Omega \right) },
  \quad 
  \omega \in L^{p}\Lambda^{k}(\Omega)
  .
 \end{align}
%  for $A \subseteq \overline\Omega$ closed and $p \in [1,\infty]$.
\end{lemma}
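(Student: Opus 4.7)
My plan is to decompose $B_{\delta}(A) \cap \Omega^{e}$ into its parts in $\overline\Omega$ and $\calC^{+}\Omega$ and treat each piece separately. On the inner part the extension $E^{k}$ acts as the identity, so one immediately gets $\| E^{k}\omega \|_{L^{p}\Lambda^{k}(B_{\delta}(A) \cap \overline\Omega)} = \| \omega \|_{L^{p}\Lambda^{k}(B_{\delta}(A) \cap \overline\Omega)}$, which is already dominated by the target right-hand side for any $L_{\Psi} \geq 1$. On the outer part the extension equals $\calA^{\ast}\omega$, so Theorem~\ref{prop:pullbackestimate} (whose Jacobian factors supply exactly the constant $C_{\calA,p}$ produced in Lemma~\ref{prop:extensionestimate}) reduces matters to bounding $\| \omega \|_{L^{p}\Lambda^{k}(\calA(B_{\delta}(A) \cap \calC^{+}\Omega))}$. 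The entire game is then to establish the geometric inclusion $\calA(B_{\delta}(A) \cap \calC^{+}\Omega) \subseteq B_{L_{\Psi}\delta}(A) \cap \overline\Omega$ for a suitable $L_{\Psi} \geq 1$ depending only on $\Psi$.

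The crucial geometric step runs as follows. First extend $\calA$ continuously to $\overline{\calC^{+}\Omega}$ by declaring $\calA(x) = x$ for $x \in \partial\Omega$; this is consistent with the formula $\calA = \Psi \circ R \circ \Psi^{-1}$ with $R(x,t) = (x,-t)$ (since $R$ fixes $t=0$), and on the compact set $\overline{\calC^{+}\Omega}$ it is bi-Lipschitz with some constant $L_{\calA}$ depending only on $\Psi$. Set $L_{\Psi} := L_{\calA} + 1$. Given $y \in B_{\delta}(A) \cap \calC^{+}\Omega$, pick $a \in A$ with $\| y - a \| \leq \delta$; since $a \in \overline\Omega$ and $y \in \overline\Omega^{c}$, the straight segment from $y$ to $a$ must meet $\partial\Omega$ at some point $p$, with $\| y - p \| \leq \delta$ and $\| p - a \| \leq \delta$. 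Using $\calA(p) = p$ gives $\| \calA(y) - a \| \leq \| \calA(y) - \calA(p) \| + \| p - a \| \leq (L_{\calA} + 1)\delta = L_{\Psi}\delta$, and $\calA(y) \in \calC^{-}\Omega \subset \overline\Omega$ completes the inclusion.

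Assembling the pieces yields $\| E^{k}\omega \|^{p}_{L^{p}\Lambda^{k}(B_{\delta}(A) \cap \Omega^{e})} \leq (1 + C_{\calA,p}^{p}) \, \| \omega \|^{p}_{L^{p}\Lambda^{k}(B_{L_{\Psi}\delta}(A) \cap \overline\Omega)}$ for finite $p$, and the elementary inequality $(1 + C^{p})^{1/p} \leq 1 + C$ produces the stated constant $1 + C_{\calA,p}$; the case $p = \infty$ reduces to a maximum and is only easier. The threshold $\delta_{0}$ plays an essentially cosmetic role: any positive number below the thickness of the exterior collar suffices, keeping $B_{\delta}(A) \cap \calC^{+}\Omega$ inside the collar where $\calA$ is defined. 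The main obstacle I foresee is the Lipschitz extension of $\calA$ across $\partial\Omega$; this should fall out of the two-sided Lipschitz bounds already worked out in the proof of Theorem~\ref{prop:closedtwosidedcollar}, where exactly the junction between inner and outer collar is analyzed.
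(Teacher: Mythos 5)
Your proof is correct and follows essentially the same route as the paper: split $B_\delta(A)\cap\Omega^e$ into its intersections with $\overline\Omega$ and $\calC^{+}\Omega$, control the outer piece by the pullback estimate of Lemma~\ref{prop:pullbackestimate}, and reduce everything to the geometric inclusion $\calA\bigl(B_\delta(A)\cap\calC^{+}\Omega\bigr)\subseteq B_{L_\Psi\delta}(A)\cap\overline\Omega$. The only cosmetic difference is in that geometric step: the paper parametrizes $x=\Psi(x_0,t)$ and bounds $\|\calA(x)-x\|\le C_1 t$ and $t\lesssim\|x-z\|$ in two moves, whereas you extend $\calA$ across $\partial\Omega$ by the identity (which is exactly $\calA=\Psi\circ R\circ\Psi^{-1}$ at $t=0$), note it is Lipschitz on the compact closure $\overline{\calC^{+}\Omega}$, and estimate $\|\calA(y)-\calA(p)\|\le L_\calA\|y-p\|$ directly; both arguments exploit the same bi-Lipschitz structure of the collar and produce comparable constants.
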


\begin{proof}
 Let $\delta \geq 0$, $p \in [1,\infty]$, and let $A \subset \overline\Omega$ be closed.
 Then 
 \begin{align*}
  \| E^{k} \omega \|_{L^p\Lambda^{k}\left( B_{\delta}(A) \cap \Omega^{e} \right) }
  &\leq 
  \| \omega \|_{L^p\Lambda^{k}\left( B_{\delta}(A) \cap \Omega \right) }
  +
  \| E^{k} \omega \|_{L^p\Lambda^{k}\left( B_{\delta}(A) \cap \calC^{+}\Omega \right) }
  .
 \end{align*}
 We set $H^{+} := B_{\delta}(A) \cap \calC^{+}\Omega$.
 There exists $H^{-} \subseteq \overline\Omega$ such that $H^{-} = \calA( H^{+} )$.
 By the definition of $E^{k}$ and Lemma~\ref{prop:pullbackestimate} we have  
 \begin{gather*}
  \| E^{k} \omega \|_{L^p\Lambda^{k}\left( H^{+} \right) }
  \leq  
  \| \Jacobian \calA \|^{k}_{L^{\infty}(\calC^{+}\Omega)}
  \| \Jacobian \calA^{\inv} \|^{\frac{n}{p}}_{L^{\infty}(\calC^{-}\Omega)}
  \| \omega \|_{ L^{p}\Lambda^{k}(H^{-}) }
  .  
 \end{gather*}
 Let $x \in B_{\delta}(A) \cap \calC^{+}\Omega$.
 There exists $z \in B_{\delta}(A) \cap \partial\Omega$
 with $\| x - z \| \leq \delta$,
 since every $x \in A$ and $z \in \calC^{+}\Omega$
 with $\| x - z \| \leq \delta$ are connected by a straight line segment of length at most $\delta$
 which intersects $\partial\Omega$ at least once.
 Furthermore
 there exist $x_0 \in \partial\Omega$ and $t \in [0,1]$
 with $x = \Phi(x_0,t)$.
 It is easily seen that $\| \calA(x) - x \| \leq C_1 t$
 for some constant $C_1$ that depends only on $\Psi$.
 Since $\Psi$ is a LIP embedding, we also know that 
 $\| x_0 - z \|^{2} + | t |^{2} \leq C_2 \| x - z \|$
 for some constant $C_2$ that depends only on $\Psi$.
 In combination we have 
%  \begin{gather*}
 $H^{-} \subseteq B_{(1 + C_1 C_2 )\delta}(A)$.
%  \end{gather*}
 This completes the proof.
\end{proof}

\begin{corollary}
 \label{prop:extensionestimate:furtherrefined}
 Let $p \in [1,\infty]$ and let $\eps > 0$ be small enough.
 There exists $C_{E,p} > 0$ such that
 for $\omega \in L^{p}\Lambda^{k}(\Omega)$
 and $F \in \calT$ we have 
 \begin{align*}
  \| E^{k} \omega \|
  _{L^p\Lambda^{k}\left( B_{\eps h_F}(F) \cap \Omega^{e} \right) }
  \leq 
  C_{E,p}
  \| \omega \|
  _{L^p\Lambda^{k}\left( B_{L_{\Psi} \eps h_F}(F) \cap \overline\Omega \right) }
  .
 \end{align*}
 \textit{Constants: we may assume $C_{E,p} := \left( 1 + C_{\calA,p} \right)$.
 It suffices that $\eps h_F < \delta_0$ for all $F \in \calT$.}
\end{corollary}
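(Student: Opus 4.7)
The corollary is essentially a direct specialization of Lemma~\ref{prop:extensionestimate:refined}, so the proof plan is very short. The plan is to apply Lemma~\ref{prop:extensionestimate:refined} with the closed set $A$ taken to be the simplex $F \in \calT$ (note $F \subset \overline\Omega$ is compact, hence closed) and with the radius $\delta := \eps h_F$. The lemma then immediately yields
\begin{equation*}
 \| E^{k} \omega \|_{L^p\Lambda^{k}\left( B_{\eps h_F}(F) \cap \Omega^{e} \right) }
 \leq
 \left( 1 + C_{\calA,p} \right)
 \| \omega \|_{L^p\Lambda^{k}\left( B_{L_{\Psi} \eps h_F}(F) \cap \overline\Omega \right) },
\end{equation*}
which is the desired inequality with $C_{E,p} := 1 + C_{\calA,p}$.

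The only hypothesis of the lemma that needs to be verified is $\delta = \eps h_F \in [0,\delta_0)$. This is precisely the smallness condition on $\eps$ stated in the corollary: since $\calT$ is a finite triangulation of $\overline\Omega$, the maximal diameter $h_{\max} := \max_{F \in \calT} h_F$ is finite, so any $\eps < \delta_0 / h_{\max}$ will do. There is no real obstacle here, since all the work has been carried out in Lemma~\ref{prop:extensionestimate:refined}; this corollary merely rephrases the result in the form that will be convenient in later sections, where neighborhoods of mesh simplices with radius proportional to the local mesh size appear in the mollification step.
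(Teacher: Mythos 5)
Your proposal is correct and is precisely the argument the paper intends: the corollary follows immediately from Lemma~\ref{prop:extensionestimate:refined} with $A := F$ (a closed simplex in $\overline\Omega$) and $\delta := \eps h_F$, and the smallness condition $\eps h_F < \delta_0$ is exactly the hypothesis $\delta \in [0,\delta_0)$ of that lemma. The paper gives no separate proof for this corollary, so there is nothing further to compare.
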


Next, we show that $E^{k}$ commutes with the exterior derivative.
% Thus, if a differential form has a well-defined exterior derivative,
% then so does its extension, and the extension of the exterior derivative
% is the exterior derivative of the extension.

\begin{lemma}
 \label{prop:extensionderivativecommute}
 Let $p, q \in [1,\infty]$. 
 If $\omega \in L^{p,q}\Lambda^{k}(\Omega)$,
 then $E^{k} \omega \in L^{p,q}\Lambda^{k}(\Omega^{e})$
 and $E^{k} \cartan \omega = \cartan E^{k} \omega$.
\end{lemma}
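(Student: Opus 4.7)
The strategy is to verify the commutation on each of the two pieces $\Omega$ and $\calC^+\Omega$ separately, and then to show that the global weak exterior derivative of $E^k\omega$ on $\Omega^e$ coincides with the piecewise one by demonstrating that the tangential traces on the shared interface $\partial\Omega$ match.

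For the piecewise part, note that on $\Omega$ the identity $E^k\omega = \omega$ gives $\cartan E^k\omega = \cartan\omega = E^{k+1}\cartan\omega$ trivially, while on $\calC^+\Omega$ we have $E^k\omega = \calA^{\ast}\omega$, so Lemma~\ref{prop:pullback:Lpq} applied to the bi-Lipschitz map $\calA : \calC^+\Omega \to \calC^-\Omega$ yields $\cartan\calA^{\ast}\omega = \calA^{\ast}\cartan\omega = E^{k+1}\cartan\omega$ as a weak exterior derivative on $\calC^+\Omega$. Consequently $E^k\omega \in L^p\Lambda^k(\Omega^e)$ has piecewise weak exterior derivative in $L^q\Lambda^{k+1}(\Omega^e)$ equal to $E^{k+1}\cartan\omega$, with the norm bounds provided by Lemma~\ref{prop:extensionestimate}.

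To upgrade this piecewise identity to a global one, I would approximate $\omega$ by a sequence $\omega_j \in C^\infty\Lambda^k(\overline\Omega)$ converging in $L^{p,q}\Lambda^k(\Omega)$, as afforded by Lemma~\ref{prop:approximation:Lpq} when $p,q < \infty$. For each smooth $\omega_j$, the form $E^k\omega_j$ is smooth on $\Omega$ and Lipschitz on $\calC^+\Omega$. Testing against $\eta \in C^\infty_c\Lambda^{n-k-1}(\Omega^e)$, I would split $\int_{\Omega^e} E^k\omega_j \wedge \cartan\eta$ into integrals over $\Omega$ and over $\calC^+\Omega$ and apply Stokes' theorem on each piece (valid on weakly Lipschitz domains via the collar parametrization $\Psi$). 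The two resulting interfacial boundary integrals over $\partial\Omega$ cancel, because $\calA$ restricted to $\partial\Omega$ is the identity, so $\trace_{\partial\Omega}\calA^{\ast}\omega_j = \trace_{\partial\Omega}\omega_j$, and because the induced outward orientations of $\partial\Omega$ seen from $\Omega$ and from $\calC^+\Omega$ are opposite. Passing to the limit $j \to \infty$ using Lemma~\ref{prop:extensionestimate} applied to both $E^k$ and $E^{k+1}$ then yields the weak commutation identity for $\omega$. The boundary cases $p = \infty$ or $q = \infty$ reduce to the previous one by a standard localization argument, since the weak exterior derivative is a local notion and $\Omega^e$ is bounded.

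The main obstacle is the rigorous trace-matching step, which hinges on the low regularity of $\partial\Omega$. Although intuitively clear—$\calA$ being the identity on $\partial\Omega$ forces tangential traces to agree—the precise justification requires either invoking Stokes' theorem on weakly Lipschitz domains (which can be done by pulling back through $\Psi$ to the product $\partial\Omega \times [-1,1]$, where the flattened geometry reduces matters to a standard reflection argument in the last coordinate) or an intrinsic verification using Rademacher's theorem together with the change-of-variables formula of Lemma~\ref{prop:transformationssatz}.
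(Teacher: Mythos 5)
Your proposal is sound and identifies the right core idea: the obstruction lives on the interface $\partial\Omega$, the tangential traces of $\omega$ and $\calA^{\ast}\omega$ agree there because $\calA|_{\partial\Omega} = \mathrm{Id}$, and the rigorous way to exploit this is to flatten the interface by pulling back through $\Psi$. That remedy is precisely what the paper does, but the paper applies it from the outset rather than as a patch on a Stokes argument. Concretely, the paper first reduces to $p=q=1$, covers $\calC\Omega$ by LIP charts $\phi_i : (-1,1)^n \to \Psi(\partial\Omega,(-1,1))$ built from $\Psi$, and observes that $\phi_i^{\ast}E^{k}\omega$ is \emph{exactly} the reflection of $\phi_i^{\ast}\omega$ across $\{t_n = 0\}$; then it approximates (via Lemma~\ref{prop:approximation:Lpq}) only on the flat lower half-cube and extends by reflection, where the matching of tangential traces and hence the existence of the global weak exterior derivative is the classical flat-reflection computation. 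This ordering -- localize and flatten, then approximate -- avoids ever invoking Stokes' theorem or a trace theory on weakly Lipschitz boundaries, whereas your ordering (approximate globally, then split and apply Stokes on $\Omega$ and $\calC^{+}\Omega$) would require Stokes' theorem for flat forms over weakly Lipschitz domains, which is not among the tools the paper develops; note also that $\calA^{\ast}\omega_j$ is only a flat form, not a Lipschitz-coefficient form, even for smooth $\omega_j$. Since your proposed fix collapses the Stokes argument into the reflection argument anyway, the two proofs converge; the paper's presentation is simply the cleaner route to the same destination.
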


\begin{proof}
 It suffices to consider the case $p = q = 1$.
 Let $\omega \in L^{1,1}\Lambda^{k}(\Omega)$.
 Lemma~\ref{prop:extensionestimate} implies that 
 $E^{k} \omega \in L^{1}\Lambda^{k}(\Omega^{e})$
 and $E^{k} \cartan \omega \in L^{1}\Lambda^{k+1}(\Omega^{e})$.
 To show that $E^{k} \omega \in L^{1,1}\Lambda^{k}(\Omega^{e})$,
 it suffices to show that there exists a covering $(U_{i})_{i \in \bbN}$ of $\Omega^{e}$
 by relatively open subsets $U_i \subseteq \Omega^{e}$
 such that $E^{k} \omega_{|U_i} \in L^{1,1}\Lambda^{k}(U_i)$
 and $E^{k} \cartan\omega_{|U_i} = \cartan E^{k} \omega_{|U_i}$.
 
 From the definition of weakly Lipschitz domains 
 we easily see that there exists a family 
 $(\theta_i)_{i \in \bbN}$ of LIP embeddings 
 $\theta_{i} : (-1,1)^{n-1} \rightarrow \partial\Omega$
 whose images cover $\partial\Omega$.
 Consequently, the mappings 
 $\phi_{i} : (-1,1)^{n} \rightarrow \Psi(\partial\Omega,(-1,1))$
 defined by $\phi_{i}( \theta(x), t ) = \Psi(x,t)$
 are a finite family of LIP embeddings
 whose images $U_i := \phi_{i}\left( (-1,1)^{n} \right)$ cover $\calC\Omega$.
 Together with $\Omega$ we thus have a finite covering of $\Omega^{e}$.
 
 We recall that $E^{k} \omega_{|\Omega} \in L^{1,1}\Lambda^{k}(\Omega)$
 and $E^{k} \cartan\omega_{|\Omega} = \cartan E^{k} \omega_{|\Omega}$.
 Next we define 
 \begin{gather*}
  \omega_i := \phi_{i}^{\ast}\left( E^{k} \omega_{|U_i}\right),
  \quad 
  \xi_i := \phi_{i}^{\ast}\left( E^{k+1} \cartan \omega_{|U_i}\right).
 \end{gather*}
 It remains to show that $E^{k} \omega_{|U_i} \in L^{1,1}\Lambda^{k}(U_i)$.
 and $E^{k} \cartan\omega_{|U_i} = \cartan E^{k} \omega_{|U_i}$,
 which is equivalent to $\omega_{i} \in L^{1,1}\Lambda^{k}((-1,1)^{n})$
 and $\cartan \omega_i = \xi_i$.
 To see this, we let 
 \begin{gather*}
  \scrR : (-1,1)^{n-1} \times (0,1) \rightarrow (-1,1)^{n-1} \times (-1,0)
 \end{gather*}
 be the reflection by the $n$-th coordinate.
 It is evident that 
 \begin{gather*}
  \omega_{i|(-1,1)^{n-1} \times (0,1)} 
  =
  \scrR^{\ast} \omega_{i|(-1,1)^{n-1} \times (-1,0)} 
  \\ 
  \xi_{i|(-1,1)^{n-1} \times (0,1)}
  =
  \scrR^{\ast} \xi_{i|(-1,1)^{n-1} \times (-1,0)}
  =
  \scrR^{\ast} \cartan \omega_{i|(-1,1)^{n-1} \times (-1,0)} 
 \end{gather*}
 By Lemma~\eqref{prop:approximation:Lpq}
 there exists a sequence $(\omega_i^{\delta})_{\delta > 0}$ of smooth differential $k$-forms
 that converge to $\omega_{i}$ over $(-1,1)^{n-1} \times (-1,0)$ in the $L^{1,1}$ norm
 for $\delta \rightarrow 0$. We let each $\omega_i^{\delta}$ be extended to 
 $(-1,1)^{n-1} \times (0,1)$ by pullback along $\scrR$.
 With this extension,
 $\omega_{i}^{\delta}$ converges to $\omega_{i}^{\delta}$ in $L^{1}\Lambda^{k}\left( (-1,1)^{n} \right)$
 and 
 $\cartan\omega_{i}^{\delta}$ converges to $\xi_{i}$ in $L^{1}\Lambda^{k}\left( (-1,1)^{n} \right)$
 for $\delta \rightarrow 0$.
 Hence $\omega_{i} \in L^{1}\Lambda^{k}\left( (-1,1)^{n} \right)$
 with $\cartan \omega_i = \xi_i$.
 
 The proof is complete.
\end{proof}

\subsection{Mesh size functions and Mollification}

The next step is constructing a commuting mollification operator.
We let the mollification radius vary over the domain,
so the operator satisfies local estimates uniformly 
for shape-regular families of triangulations. 
% We achieve this by letting the mollification radius vary over the domain 
% proportionally to the local mesh size.
A key component is a smooth function that indicates the local mesh size.

Recall the standard mollifier. This is a smooth function
\begin{align}
 \label{math:standardmollifier}
 \mu : \bbR^{n} \rightarrow [0,1],
 \quad
 y
 \mapsto \left\{ \begin{array}{rcl}
  C_{\mu} \exp\left( - ( 1 - |y|^{2})^{\inv} \right)
  & \text{ if } & |y| \leq 1, 
  \\
  0
  & \text{ if } & |y| > 1,
 \end{array}\right.
\end{align}
with compact support,
where $C_{\mu} > 0$ is chosen such that $\mu$ has unit integral.
We set $\mu_{r}(y) := r^{-n} \mu(y/r)$ for $y \in \bbR^{n}$ and $r > 0$.

First we prove the existence of a mesh size function $\ttH$ with Lipschitz regularity,
and then the existence of a mesh size function $\mathtt h$ that is smooth.
% We first show the existence of a mesh size function with Lipschitz regularity:

\begin{lemma}
 \label{prop:meshsizefunction}
 There exists
 $L_{\Omega} > 0$, only depending on $\Omega$, 
 and
 a Lipschitz continuous function $\ttH : \overline\Omega \rightarrow \bbR_0^{+}$
 such that
 \begin{gather}
  \label{math:meshfunctionrough:localcomparison}
  \forall F \in \calT :
  C_{mesh}^{\inv} h_F \leq \ttH_{|F} \leq C_{mesh} h_F,
  \\
  \label{math:meshfunctionrough:lipschitz}
  \Lip( \ttH, \overline\Omega ) 
  \leq
  C_{mesh} L_{\Omega}.
 \end{gather}
\end{lemma}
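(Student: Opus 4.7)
The plan is to construct $\ttH$ as a continuous piecewise affine function. First I would declare $\ttH(V) := h_V$ for every vertex $V \in \calT^{0}$, using the averaged-diameter convention from Section~\ref{sec:triangulations}. Then I would extend $\ttH$ to each top-dimensional simplex $T \in \calT^{n}$ by affine interpolation of the values at its vertices (equivalently, barycentric coordinates). Since the interpolant on any face $F \in \calT$ depends only on the vertex values of $F$, the resulting function is well-defined, continuous on $\overline\Omega$, and globally piecewise affine.

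Next I would verify the two-sided bound \eqref{math:meshfunctionrough:localcomparison}. Fix $F \in \calT$; every value of $\ttH_{|F}$ is a convex combination of $\{h_V : V \text{ a vertex of } F\}$. Any vertex $V$ of $F$ is also a vertex of every $n$-simplex $T'$ containing $V$, and each such $T'$ shares $V$ with $F$, so $T' \in \calT(F)$. The symmetric form of \eqref{math:shapeconditions:comparison} then gives $C_{mesh}^{-1} h_F \leq h_{T'} \leq C_{mesh} h_F$, and averaging these bounds over the $n$-simplices containing $V$ yields $C_{mesh}^{-1} h_F \leq h_V \leq C_{mesh} h_F$. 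Convex combinations preserve this sandwich, so \eqref{math:meshfunctionrough:localcomparison} follows.

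Then I would bound the Lipschitz constant. On each $T \in \calT^{n}$ the function $\ttH$ is affine, so $\nabla \ttH_{|T}$ is constant, with magnitude
\[
|\nabla \ttH_{|T}| \;\leq\; \frac{\max_{V,W \text{ vertices of } T} |\ttH(V) - \ttH(W)|}{\rho_T},
\]
where $\rho_T$ denotes the minimum vertex-to-opposite-face height of $T$. The numerator is bounded by $2 C_{mesh} h_T$ by the previous step. For the denominator, the identity $|T| = \rho_T |F|/n$ with $F$ an opposite face of diameter at most $h_T$, combined with \eqref{math:shapeconditions:flatness}, gives $\rho_T \geq n h_T / C_{mesh}$. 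Hence $|\nabla \ttH_{|T}| \leq 2 C_{mesh}^{2} / n$. A continuous piecewise affine function has Lipschitz constant equal to the essential supremum of its gradient, which yields \eqref{math:meshfunctionrough:lipschitz} after absorbing the extra power of $C_{mesh}$ and the dimensional factor into a constant $L_\Omega$ depending only on $\Omega$ and the ambient dimension.

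The chief obstacle is bookkeeping the powers of $C_{mesh}$ through the averaging convention for $h_V$, the two-sided version of \eqref{math:shapeconditions:comparison}, and the flatness estimate, so as to match the stated form $C_{mesh}L_\Omega$ rather than a higher power of $C_{mesh}$. Beyond this, the argument is entirely elementary, needing only barycentric interpolation on simplices and the shape-regularity inequalities from Section~\ref{sec:triangulations}.
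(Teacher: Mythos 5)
Your construction of $\ttH$ (vertex values $h_V$, affine interpolation, local comparison via \eqref{math:shapeconditions:comparison}) matches the paper's exactly, and the gradient bound $\Lip(\ttH,T)\lesssim C_{mesh}$ on each individual $n$-simplex is likewise the right starting point. However, there is a genuine gap in the final step.

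The claim that ``a continuous piecewise affine function has Lipschitz constant equal to the essential supremum of its gradient'' is true over a convex domain, but fails over a general domain $\overline\Omega$. What the gradient bound actually controls is the \emph{path Lipschitz constant}: for $x,y$ joined by a rectifiable path $\gamma\subset\overline\Omega$, one gets $|\ttH(x)-\ttH(y)|\le C_{mesh}\,\mathrm{length}(\gamma)$. To convert this into a bound in terms of $\|x-y\|$, one must compare the inner path metric of $\overline\Omega$ with the restricted Euclidean metric, and this comparison is far from automatic: think of a thin C-shaped or slit domain, where two points at the ``tips'' are Euclidean-close yet every interior path is long, so a piecewise affine function with uniformly small gradient can still have an arbitrarily large global Lipschitz ratio. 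This is precisely where the constant $L_\Omega$ enters, and where the hypothesis that $\Omega$ is a bounded weakly Lipschitz domain is used. The paper covers $\overline\Omega$ by finitely many bi-Lipschitz chart neighborhoods $U_i$, invokes Lebesgue's number lemma to get a uniform scale $\gamma$, constructs piecewise-linear paths inside a single chart for points at distance $\le\gamma$ (so the chain-of-segments estimate applies), and handles far-apart points by the trivial bound $|\ttH(x)-\ttH(y)|\le\gamma^{-1}\diam(\Omega)\,C_{mesh}\|x-y\|$. This yields $L_\Omega=\sup\{\gamma^{-1}\diam(\Omega),\Lip(\phi_i^{-1})\Lip(\phi_i)\}$. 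The remark following the lemma in the paper makes this point explicit: $L_\Omega$ compares the inner path metric of $\overline\Omega$ to the Euclidean metric, and establishing this comparability is the non-trivial content of the lemma. Your proof sweeps this under the rug by asserting a ``sup of gradient'' principle that simply does not hold on non-convex domains, and the bookkeeping of $C_{mesh}$ powers that you flag as the chief obstacle is in fact the easy part; the path-metric comparison is the real one.
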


\begin{proof}
 Let the function $\ttH : \overline\Omega \rightarrow \bbR_0^{+}$ 
 be defined as follows. If $V \in \calT^{0}$, then we set $\ttH(V) = h_V$.
 We then extend $\ttH$ to each $T \in \calT$ by affine interpolation
 between the vertices of $T$.
 With this definition, $\ttH$ is continuous, and \eqref{math:meshfunctionrough:localcomparison}
 follows from \eqref{math:shapeconditions:comparison}.
 It remains to prove \eqref{math:meshfunctionrough:lipschitz}.
 Obviously, $\Lip( \ttH, T ) \leq C_{mesh}$ for $T \in \calT^{n}$.
 
 Since $\Omega$ is a bounded weakly Lipschitz domain, 
 there exists be a finite family $(U_i)_{1 \leq i \leq N}$
 of relatively open sets $U_i \subseteq \overline\Omega$
 such that 
 such that the union of all $U_i$ equals $\overline\Omega$,
 and such that there exist $\phi_i : \overline{U_i} \rightarrow [-1,1]^{n}$ bi-Lipschitz
 for each $1 \leq i \leq N$.
 By Lebesgue's number lemma\iffalse \cite[A13.4]{querenburg1973mengentheoretische} \fi,
 we may pick $\gamma > 0$ so small
 that for each $x \in \overline\Omega$
 there exists $1 \leq i \leq N$ such that 
 $B_{\gamma}(x) \cap \overline\Omega \subseteq U_i$.
 
 First assume that $x, y \in \Omega$ with $0 < \| x - y \| \leq \gamma$.
 Then there exists $1 \leq i \leq N$ with $x, y \in U_i$.
%  Let $S \subseteq [-1,1]^{n}$ be the line segment between $\phi(x)$ and $\phi(y)$.
 For $M \in \bbN$, consider a partition of
 the line segment in $[-1,1]^{n}$ from $\phi(x)$ to $\phi(y)$ 
 into $M$ subsegments
 of equal length with points $\phi_i(x)=z_0, z_1, \dots, z_M=\phi_i(x)$.
 Let $x_m := \phi_i^{\inv}(z_m) \in U_i$. 
 For $M$ large enough, the straight line segment between $x_{m-1}$ and $x_{m}$
 is contained in $U_i$ for all $1 \leq m \leq M$.
 After a further subpartitioning, not necessarily equidistant,
 we may assume to have a sequence $x = w_0, \dots, w_{M'} = y$
 for some $M' \in \bbN$ such that for all $1 \leq m \leq M'$
 the points $w_{m-1}$ and $w_{m}$ are connected by a straight line segment
 in $U_i$ and such that there exists $F_{m} \in \calT$ with $w_{m-1}, w_{m} \in F_{m}$.
 We observe  
 \begin{align*}
  | \ttH(y) - \ttH(x) |
  &\leq 
  \sum_{m=1}^{M'} | \ttH(w_{m}) - \ttH(w_{m-1}) |
  \\&\leq 
  C_{mesh} \sum_{m=1}^{M'} \| w_{m} - w_{m-1} \|
  \\&= 
  C_{mesh} \sum_{m=1}^{M} \| x_{m} - x_{m-1} \|
  \\&\leq 
  C_{mesh} \Lip(\phi_i^{\inv}) \sum_{m=1}^{M} \| \phi_i(x_{m}) - \phi_i(x_{m-1}) \|
  \\&\leq 
  C_{mesh} \Lip(\phi_i^{\inv}) \cdot \| \phi_i(y) - \phi_i(x) \|
  \\&\leq 
  C_{mesh} \Lip(\phi_i^{\inv}) \Lip(\phi_i) \cdot \| y - x \|
  . 
 \end{align*}
 If we instead assume that $x, y \in \Omega$ with $\| x - y \| \geq \gamma$, then 
 \begin{align*}
  | \ttH(y) - \ttH(x) |
  \leq
  \gamma^{\inv} \diam(\Omega) \cdot | \ttH(x) - \ttH(y) |
  \leq 
  \gamma^{\inv} \diam(\Omega) \cdot C_{mesh} \cdot \| y - x \|
  ,
 \end{align*}
 since $\gamma < \diam(\Omega)$.
 Hence $\Lip( \ttH, \Omega ) \leq C_{mesh} L_{\Omega}$ with 
 \begin{align*}
  L_{\Omega}
  :=
  \sup\left\{
   \gamma^{\inv} \diam(\Omega), 
   \; \Lip(\phi_1^{\inv}) \Lip(\phi_1),
   \dots, 
   \; \Lip(\phi_N^{\inv}) \Lip(\phi_N)
  \right\}.
 \end{align*}
 Thus $\Lip( \ttH, \overline\Omega ) \leq C_{mesh} L_{\Omega}$ 
 because any Lipschitz continuous function 
 is Lipschitz continuous over the closure of its domain with the same Lipschitz constant.
\end{proof}

\begin{remark}
 The preceding result was used before in literature,
 but estimating $\Lip(\ttH)$ did not receive much attention.
 An interesting observation is that $\Lip(\ttH)$
 is the product of $C_{mesh}$, which only depends on the shape of the simplices,
 and $L_{\Omega}$, which depends only the geometry. 
 Conceptually, $L_{\Omega}$ compares
 the \emph{inner path metric} of $\overline\Omega$
 to the Euclidean metric over $\overline\Omega$.
 The equivalence of these two metrics is non-trivial in general,
 but holds true for bounded weakly Lipschitz domains. 
\end{remark}

\begin{lemma}
 \label{prop:smoothmeshsizefunction}
 There exist a smooth function $\mathtt h : \Omega^{e} \rightarrow \bbR_0^{+}$
 and uniformly bounded constants $C_h > 0$ and $L_h > 0$
 such that
 \begin{gather}
  \label{math:meshfunction:localcomparison}
  \forall F \in \calT :
  \forall x \in F : 
  C_{h}^{\inv} h_F \leq \mathtt h(x) \leq C_{h} h_F,
  \\
  \label{math:meshfunction:lipschitz}
  \Lip( \mathtt h, \overline\Omega ) 
  \leq
  L_{h}.
 \end{gather}
%  for some geometry-dependent shape-uniform $L_{h} > 0$.
%  
 \textit{Constants: we may choose $C_h = C_{mesh}^{2}$
 and $L_{h} = \left( 1 + \Lip(\calA) \right) C_{mesh} L_{\Omega}$.
%  for some geometry-dependent $L_{\Omega} \geq 1$.
 }
\end{lemma}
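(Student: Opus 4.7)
The plan is to first extend the Lipschitz mesh size function $\ttH$ of Lemma~\ref{prop:meshsizefunction} from $\overline\Omega$ to the whole extended domain $\Omega^e$ using the collar reflection $\calA$, and then to smoothen the extension by a mollification whose radius is locally proportional to the value of the extended function itself; the self-similar scaling is necessary because a uniform-radius mollifier would violate the local comparison over arbitrarily refined parts of $\calT$.

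\textbf{Step 1 (extension).} Define
\begin{equation*}
 \tilde\ttH : \Omega^{e} \to \bbR_0^{+},
 \quad
 \tilde\ttH(x) := \begin{cases} \ttH(x) & \text{if } x \in \overline\Omega, \\ \ttH(\calA(x)) & \text{if } x \in \calC^{+}\Omega. \end{cases}
\end{equation*}
The two pieces agree on $\partial\Omega$ since $\calA$ is the identity there, so $\tilde\ttH$ is continuous. On $\overline\Omega$, $\Lip(\tilde\ttH) \leq C_{mesh} L_{\Omega}$ by Lemma~\ref{prop:meshsizefunction}, and on $\calC^{+}\Omega$, the chain rule for Lipschitz functions gives $\Lip(\tilde\ttH_{|\calC^+\Omega}) \leq \Lip(\calA) \cdot C_{mesh} L_{\Omega}$. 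A straight-line-segment argument through $\partial\Omega$, analogous to the one used in Theorem~\ref{prop:closedtwosidedcollar}, then yields the global bound $\Lip(\tilde\ttH, \Omega^{e}) \leq (1 + \Lip(\calA)) C_{mesh} L_{\Omega}$, which is the target value of $L_h$.

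\textbf{Step 2 (variable-radius mollification).} Pick $\alpha > 0$ so small that $\alpha \Lip(\tilde\ttH, \Omega^{e}) \leq \min\{\tfrac{1}{2}, C_{mesh}-1\}$ (using $C_{mesh} \geq 1$; if $C_{mesh} = 1$ one enlarges $C_h$ harmlessly), and set $r(x) := \alpha\tilde\ttH(x)$. After an ambient Lipschitz extension of $\tilde\ttH$ to all of $\bbR^{n}$ with the same Lipschitz constant, define
\begin{equation*}
 \mathtt h(x) := \int_{\bbR^{n}} \mu(y)\, \tilde\ttH\bigl(x - r(x)y\bigr) \,\dif y,
 \qquad x \in \Omega^{e},
\end{equation*}
with the standard mollifier $\mu$ of~\eqref{math:standardmollifier}. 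Smoothness of $\mathtt h$ follows by differentiating under the integral: the integrand is jointly smooth in $x$ away from $r(x) = 0$, and $r > 0$ throughout $\Omega^e$ (since $h_F > 0$ on every simplex).

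\textbf{Step 3 (bounds).} For $|y| \leq 1$ the Lipschitz bound on $\tilde\ttH$ yields $|\tilde\ttH(x - r(x)y) - \tilde\ttH(x)| \leq \alpha\Lip(\tilde\ttH)\, \tilde\ttH(x)$, whence
\begin{equation*}
 (1-\alpha L_h)\,\tilde\ttH(x) \leq \mathtt h(x) \leq (1+\alpha L_h)\,\tilde\ttH(x).
\end{equation*}
Combined with Lemma~\ref{prop:meshsizefunction}, this gives the local comparison with constant $C_h \leq (1+\alpha L_h) C_{mesh} \leq C_{mesh}^{2}$ by the choice of $\alpha$.

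\textbf{Step 4 (main obstacle: Lipschitz constant of $\mathtt h$).} The delicate point is to keep $\Lip(\mathtt h,\overline\Omega) \leq L_h$ after variable-radius mollification. Differentiating under the integral sign almost everywhere yields
\begin{equation*}
 \nabla\mathtt h(x) = \int_{\bbR^{n}} \mu(y)\,\bigl( I - \alpha\, y \otimes \nabla\tilde\ttH(x) \bigr)^{\!\top}\! \nabla\tilde\ttH\bigl(x-r(x)y\bigr) \,\dif y,
\end{equation*}
so $|\nabla\mathtt h(x)| \leq (1 + \alpha\Lip(\tilde\ttH))\Lip(\tilde\ttH) \leq L_h$ after absorbing the $(1+\alpha L_h)$ factor into the choice of $\alpha$ (or into the definition of $L_h$ by replacing the stated value by a suitable multiple, which remains uniformly bounded). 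The restriction to $\overline\Omega$ in the statement is then immediate.
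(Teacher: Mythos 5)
Your Step~2 does not produce a smooth function, and this is the fatal flaw. You write $\mathtt h(x) = \int \mu(y)\,\tilde\ttH(x - r(x)y)\,\dif y$ with $r(x)=\alpha\tilde\ttH(x)$, and claim ``the integrand is jointly smooth in $x$.'' It is not: for fixed $y$, the map $x\mapsto \tilde\ttH(x-r(x)y)$ is a composition of Lipschitz functions (the affine interpolant $\ttH$, extended by the bi-Lipschitz $\calA$), hence merely Lipschitz. Changing variables $z = x - r(x)y$ shows
\begin{align*}
 \mathtt h(x) = r(x)^{-n}\int \mu\!\left(\tfrac{x-z}{r(x)}\right)\tilde\ttH(z)\,\dif z,
\end{align*}
so the $x$-dependence enters through $r(x)$ inside a kernel that is only as regular as $r$; integrating against the smooth weight $\mu$ does not repair this. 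With a Lipschitz radius function you get a Lipschitz $\mathtt h$, not a $C^\infty$ one, so \eqref{math:meshfunction:localcomparison}--\eqref{math:meshfunction:lipschitz} might still hold but the very first claim of the lemma (smoothness of $\mathtt h$) fails. Notice this is exactly the defect the paper discusses in Remark~\ref{rem:mollifier}: a Lipschitz radius function, used as a mollification scale, has a discontinuous differential, and downstream (in Lemma~\ref{prop:mollifier}) this would spoil continuity of $R^k_{\eps\mathtt h}\omega$. The whole purpose of Lemma~\ref{prop:smoothmeshsizefunction} is to replace that Lipschitz object by a genuinely smooth one.

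Your stated motivation for variable-radius mollification --- that ``a uniform-radius mollifier would violate the local comparison over arbitrarily refined parts of $\calT$'' --- is also a misconception, and it is precisely this misconception that drives you into the trap. The lemma is stated for a single fixed finite triangulation, for which $h_{\min}:=\min_{T\in\calT} h_T>0$. The paper simply takes $\mathtt h:=\mu_r\ast E^0\ttH$ with a \emph{fixed} radius $r$ chosen so small that $(1+\Lip(\calA))r < h_{\min}\eps_h$; then every $\mathtt h(x)$ is a convex combination of values of $\ttH$ over $B_{(1+\Lip(\calA))r}(x)\cap\overline\Omega\subseteq\calT(T)$, which combined with \eqref{math:meshfunctionrough:localcomparison} and \eqref{math:shapeconditions:comparison} gives $C_h=C_{mesh}^2$ cleanly, while the Lipschitz bound $\Lip(\mathtt h)\le\Lip(E^0\ttH)\le(1+\Lip(\calA))C_{mesh}L_\Omega$ and the smoothness of $\mathtt h$ are both standard for fixed-radius convolution. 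The admissible range of $r$ depends on $h_{\min}$, but the constants $C_h$ and $L_h$ do not --- that is all the ``uniformly bounded'' clause requires. You also lose sharpness in Step~3: your Lipschitz-based comparison only gives $C_h \le (1+\alpha L_h)C_{mesh}$, which needs an extra argument (and a case distinction on whether $C_{mesh}$ is close to $1$) to match $C_{mesh}^2$, whereas the paper's convex-hull argument gets $C_{mesh}^2$ directly. Your Step~1 is essentially identical to the paper's observation that $E^0\ttH$ is Lipschitz, and is fine.
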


\begin{proof}
 Let $\ttH : \overline\Omega \rightarrow \bbR^{+}_{0}$ be as in Lemma~\ref{prop:meshsizefunction}.
 We observe that $E^{0} \ttH$ is Lipschitz with 
 \begin{align*}
  \Lip\left( E^{0} \ttH, \Omega^{e} \right)
  \leq 
  \left( 1 + \Lip(\calA) \right) \Lip\left( \ttH, \overline\Omega \right).
 \end{align*}
 % This follows easily by a case distinction.
 Let $r > 0$ and define $\mathtt h := \mu_{r} \ast E^{0} \ttH$ 
 be the convolution of $E^{0} \ttH$ with the scaled mollifier $\mu_r$.
 For $r$ small enough it is easily verified that 
 \begin{align*}
  \Lip\left( \mathtt h, \overline\Omega \right)
  \leq 
  \Lip\left( E^{0} \ttH, \Omega^{e} \right).
 \end{align*}
 By standard results, $\mathtt h$ is smooth. Moreover we have 
 \begin{align*}
  \mathtt h(x)
  &=
  \int_{B_r(x) \cap \Omega} \mu_{r}(y) E^{0} \ttH(x + y) \;\dif y
  +
  \int_{B_r(x) \cap \calC^{+}\Omega} \mu_{r}(y) E^{0} \ttH(x + y) \;\dif y
%   \\&=
%   \int_{B_r(x) \cap \Omega} \mu_{r}(y) E^{0} \ttH(x + y) \;\dif y
%   +
%   \int_{\calA(B_r(x) \cap \overline\Omega^{c})} 
%   \mu_{r}\left( \calA^{\inv}(y) \right) \ttH( x + \calA^{\inv}(y) ) \det\Jacobian\calA^{\inv}\;\dif y
  .
 \end{align*}
 If $z \in B_r(x) \cap \overline\Omega^{c}$, 
 then $\calA(z) \in \Omega$ has at most distance $r + \Lip(\calA) r$ from $x$.
 We conclude that $\mathtt h(x)$ lies in the convex hull of values of $\ttH$
 over $B_{ r + \Lip(\calA) r }(x) \cap \overline\Omega$.
 Let $r > 0$ so small that $r + \Lip(\calA) r < h_{min} \eps_h$,
 where $h_{min}$ is the minimal diameter of the simplices in $\calT$
 and $\eps_h$ is as in \eqref{math:neighborcontainment}.
 Then $B_{ r + \Lip(\calA) r }(x) \cap \overline\Omega \subseteq \calT(T)$,
 and the desired statement follows.
\end{proof}

We use the mesh size function to define a family of LIP embeddings of $\Omega$ into $\Omega^{e}$.
For $\eps > 0$ we introduce  
\begin{align}
 \label{math:mollification_transform}
 \Phi_{\eps{\mathtt h}} :
 \overline\Omega \times \bbR^{n}
 \rightarrow
 \bbR^n,
 \quad
 \Phi_{\eps{\mathtt h}}( x, y )
 =
 x + \eps {\mathtt h}(x) y
 .
\end{align}
We abbreviate $\Phi_{\eps {\mathtt h},y}(x) := \Phi_{\eps {\mathtt h}}( x, y )$. 
Note that $\Phi_{\eps {\mathtt h}}$ is smooth:
\begin{align}
 \Jacobian_x \Phi_{\eps{\mathtt h}}( x, y )
 =
 \mathrm{Id} + \epsilon \cdot y \otimes \cartan {\mathtt h}_{|x}. 
\end{align}
% From this we immediately see that 
% In particular, 
It is easy to see that for $\eps > 0$ small enough,
$\Phi_{\eps\mathtt h,y}$ is a LIP embedding
with $\Phi_{\eps {\mathtt h}}(\Omega,B_1) \subseteq \Omega^{e}$
for any $y \in B_1(0)$. 
We now define the mollification operator $R^{k}_{\eps {\mathtt h}}$.
For $\omega \in L^1\Lambda^{k}(\Omega^{e})$
we let 
% the mollified differential form $R^{k}_{\eps {\mathtt h}} \omega$ by 
\begin{align}
 \label{math:mollification_operator}
 R^{k}_{\eps {\mathtt h}} \omega_{|x}
 &:=
 \int_{\bbR^n} \mu( y ) (\Phi_{\eps {\mathtt h},y}^{\ast} \omega)_{|x} \dif y,
 \quad
 x \in \Omega
 .
\end{align}
% pointwise as an $\Lambda^{k}\bbR^{\ast}_n$-valued Lebesgue integral.
% \\
% 
We first observe that $R^{k}_{\eps{\mathtt h}}$ is a bounded linear operator from 
$L^{p}\Lambda^{k}(\Omega^{e})$ into $C\Lambda^{k}(\Omega)$. 
For the mollification operator to yield continuous differential forms,
it is crucial that $\Phi_{\eps{\mathtt h}}$ has continuous first derivatives.

\begin{lemma}
 \label{prop:mollifier}
 Let $\eps > 0$ be small enough.
 The operator 
 \begin{align*}
  R^{k}_{\eps {\mathtt h}} : L^{p}\Lambda^{k}(\Omega^{e}) \rightarrow C\Lambda^{k}(\Omega),
  \quad 
  p \in [1,\infty],%
 \end{align*}
 is well-defined and linear, and we have 
%  and for every $p \in [1,\infty]$ we have
 \begin{align}
  \label{math:mollifier_estimate_local}
  \| R^{k}_{\eps {\mathtt h}} \omega \|_{C\Lambda^{k}(T)}
  &\leq
%   2^{k}
  \left( 1 + \eps L_h \right)^{k}
  \vol^{n}(B_1(0))
  C_{h}^{ \frac{n}{p} }
  \eps^{ - \frac{n}{p} }
  h_T^{ - \frac{n}{p} }
  \| \omega \|_{L^p\Lambda^{k}\left( B_{C_{h} \eps h_T}(T) \right) }
 \end{align}
 for every $p \in [1,\infty]$, $T \in \calT^{n}$ and $\omega \in L^{p}\Lambda^{k}(\Omega^{e})$.
 Moreover, for $\omega \in L^{p,q}\Lambda^{k}(\Omega^{e})$
 with $p,q \in [1,\infty]$ we have 
 \begin{align*}
  R^{k+1}_{\eps {\mathtt h}} \cartan \omega \in C\Lambda^{k+1}(\Omega),
  \quad
  \cartan R^{k}_{\eps {\mathtt h}} \omega = R^{k+1}_{\eps {\mathtt h}} \cartan \omega
  .
 \end{align*}
 \textit{Constants: it suffices that $C_{h} \eps < \eps_h$.}
%  \textit{Constants: it suffices that $C_{h} \eps < \eps_h$.
%  We may choose $C_R = \left( 1 + \eps L_h \right)^{k} \vol^{n}(B_1(0)) C_{h}^{ \frac{n}{p} }$.}
\end{lemma}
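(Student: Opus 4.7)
The plan is to establish the three claims in turn: well-definedness of $\Phi_{\eps\mathtt h,y}$ as a LIP embedding into $\Omega^{e}$, the pointwise estimate \eqref{math:mollifier_estimate_local}, and the commutation with $\cartan$. Computing $\Jacobian_x \Phi_{\eps\mathtt h, y} = \mathrm{Id} + \eps\, y \otimes \cartan\mathtt h_{|x}$ gives the uniform bound $\|\Jacobian_x \Phi_{\eps\mathtt h, y}\| \leq 1 + \eps L_h$ via \eqref{math:meshfunction:lipschitz}. Choosing $\eps$ so that $C_h \eps < \eps_h$ ensures that $\Phi_{\eps\mathtt h, y}(x) = x + \eps\mathtt h(x) y$ lies in $B_{\eps\mathtt h(x)}(x) \subseteq B_{C_h \eps h_T}(T) \subseteq \Omega^{e}$ whenever $x \in T \in \calT^n$ and $|y| \leq 1$, by \eqref{math:neighborcontainment} and \eqref{math:meshfunction:localcomparison}; thus the pullback $\Phi^{\ast}_{\eps\mathtt h, y}\omega$ is a well-defined locally integrable $k$-form on $\Omega$ for every $\omega \in L^p\Lambda^k(\Omega^{e})$, and the defining integral \eqref{math:mollification_operator} makes sense pointwise.

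For the pointwise bound on $T \in \calT^n$, the algebraic estimate $|(\Phi^{\ast}\omega)_{|x}| \leq \|\Jacobian_x\Phi\|^k |\omega_{|\Phi(x)}|$ (as in the proof of Theorem~\ref{prop:pullbackestimate}) yields
\[
 |R^k_{\eps\mathtt h}\omega_{|x}| \leq (1+\eps L_h)^k \int_{B_1(0)} \mu(y) |\omega_{|x + \eps\mathtt h(x) y}| \dif y.
\]
Substituting $z = x + \eps\mathtt h(x)y$ for fixed $x$ converts the right-hand side into an integral over $B_{\eps\mathtt h(x)}(x)$ with the weight $(\eps\mathtt h(x))^{-n}\mu\bigl((z-x)/(\eps\mathtt h(x))\bigr)$. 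Applying H\"older's inequality with the trivial bound $\mu \leq \|\mu\|_{L^\infty}$ together with $C_h^{-1} h_T \leq \mathtt h(x) \leq C_h h_T$ and the inclusion $B_{\eps\mathtt h(x)}(x) \subseteq B_{C_h\eps h_T}(T)$ produces the claimed local estimate, after absorbing the $\mu$-constants into $\vol^n(B_1(0))$. Continuity of $x \mapsto R^k_{\eps\mathtt h}\omega_{|x}$ follows from the same change-of-variables representation: the resulting kernel $K(x,z)$ is smooth and compactly supported in $z$, locally uniformly in $x$, so a standard dominated-convergence argument using the local $L^p$-bound on $\omega$ delivers $R^k_{\eps\mathtt h}\omega \in C\Lambda^k(\Omega)$ even in the endpoint $p = \infty$.

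The commutation identity is first verified for smooth $\omega \in C^{\infty}\Lambda^k(\overline{\Omega^{e}})$ by differentiating under the integral sign (justified by smoothness of $\Phi_{\eps\mathtt h}$ and compact support of $\mu$) and invoking $\cartan \Phi^{\ast}_{\eps\mathtt h, y}\omega = \Phi^{\ast}_{\eps\mathtt h, y}\cartan\omega$ from Lemma~\ref{prop:pullback:Lpq}. For $\omega \in L^{p,q}\Lambda^k(\Omega^{e})$ with $p,q < \infty$, the identity extends by density of smooth forms (Lemma~\ref{prop:approximation:Lpq}) combined with the boundedness of $R^k_{\eps\mathtt h}$ and $R^{k+1}_{\eps\mathtt h}\cartan$ in the relevant local norms. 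The main obstacle is the endpoint cases $p = \infty$ or $q = \infty$, where Lemma~\ref{prop:approximation:Lpq} is unavailable. I would address these by localization: restricting $\omega$ to any relatively compact neighborhood $U$ of $\overline\Omega$ inside $\Omega^{e}$ places it in $L^{p',q'}\Lambda^k(U)$ for every finite $p', q'$, and the finite-exponent case applies since $R^k_{\eps\mathtt h}\omega$ only sees $\omega$ on such a neighborhood by the support property established in the first paragraph.
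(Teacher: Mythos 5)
Your proposal is correct in its conclusions and follows the paper's reasoning closely for the well-definedness of the operator, the pointwise estimate \eqref{math:mollifier_estimate_local}, and the continuity of $R^{k}_{\eps\mathtt h}\omega$. The continuity argument in particular (rewriting the mollification as an integral with a smooth, compactly supported kernel in the $z$-variable, then invoking dominated convergence against the $L^{1}$-integrability of $\omega$ over the bounded set $\Omega^{e}$) is essentially what the paper does in explicit coordinates.

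Where you diverge is the commutation step. You first establish $\cartan R^{k}_{\eps\mathtt h}\omega = R^{k+1}_{\eps\mathtt h}\cartan\omega$ for smooth $\omega$ by differentiating under the integral, then extend to $L^{p,q}$ with $p,q < \infty$ via the density of $C^{\infty}\Lambda^{k}(\overline{\Omega^{e}})$ (Lemma~\ref{prop:approximation:Lpq}), the local boundedness from \eqref{math:mollifier_estimate_local}, and closedness of the weak exterior derivative; the endpoint cases $p = \infty$ or $q = \infty$ are then reduced to the finite-exponent case because $\Omega^{e}$ is bounded (the extra localization you invoke is not actually needed for that reason). The paper instead proves the identity directly and uniformly in $p,q \in [1,\infty]$ by testing against $\eta \in C^{\infty}_{c}\Lambda^{n-k-1}(\Omega)$, applying Fubini to interchange the $y$-integral with the spatial integral, and then using Lemma~\ref{prop:pullback:Lpq} for each fixed $y \in B_{1}(0)$ (the LIP embedding $\Phi_{\eps\mathtt h,y}$ commutes with the weak exterior derivative). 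That route avoids any approximation argument and the endpoint special-casing entirely, which is why the paper does not need Lemma~\ref{prop:approximation:Lpq} at this stage at all. Both arguments are valid; yours is somewhat heavier, but it has the minor virtue of not requiring Fubini for the pairing with compactly supported test forms, relying instead on the already-established mapping properties of $R^{k}_{\eps\mathtt h}$.
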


\begin{proof}
%  Let $\eps < C_h^{\inv} \eps_h$ and $L_h \eps < \niceonehalf$,
 Let $p \in [1,\infty]$ and let $\omega \in L^{p}\Lambda^{k}(\Omega^{e})$.
 If $\eps < \eps_h C_h^{\inv}$ and $\eps < \niceonehalf L_h$,
 then $\Phi_{\eps \mathtt h, y}$ is a LIP embedding from $\Omega$ to $\Omega^{e}$
 for every $y \in B_1(0)$. 
 Hence $\mu(y) (\Phi_{\eps {\mathtt h},y}^{\ast} \omega)_{|x}$
 is measurable in $y$ for every $x \in \overline\Omega$,
 and the integral \eqref{math:mollification_operator} is well-defined. 
 
 We prove the estimate \eqref{math:mollifier_estimate_local} pointwise.
 Let 
%  $\omega$ be written as in \eqref{math:standardrepresentation_fields}, and let
 $x \in T$ for some $T \in \calT^{n}$.
 Similar as in the proof of Lemma~\ref{prop:pullbackestimate} we observe
 \begin{align*}
  \left| R^{k}_{\eps {\mathtt h}}\omega \right|_{|x}
  \leq 
  \int_{\bbR^{n}} 
  \mu( y ) 
  \left\| \Jacobian_{x} \Phi_{\eps \mathtt h,y} \right\|^{k}_{|x} 
  \|\omega\|_{\Phi_{\eps \mathtt h,y}(x)}
  .
 \end{align*}
 Since integrand vanishes for $y \notin B_1(0)$,
 we may use that 
 \begin{align*}
  \left\| \Jacobian_{x} \Phi_{\eps \mathtt h,y} \right\|^{k}_{|x} 
  \leq 
  \Lip( \Phi_{\eps \mathtt h,y}, \overline\Omega )
  \leq 
  \left( 1 + \eps L_h \right)
 \end{align*}
 and, via a substitution of variables and H\"older's inequality, use 
 \begin{align*}
  \int_{\bbR^n}
  |\mu( y )|
  \cdot 
  \|\omega\|_{ x + \eps {\mathtt h}(x) y }
  \dif y
%   &=
%   \int_{\bbR^n} 
%   \eps^{-n} {\mathtt h}(x)^{-n} \mu( \eps^{\inv} {\mathtt h}(x)^\inv y )
%   \big|{\omega_\sigma\left( x + y \right)}\big|
%   \dif y
%   \\&
  \leq \vol^{n}(B_1(0)) \cdot 
  \eps^{ - \frac{n}{p} }
  {\mathtt h}(x)^{ - \frac{n}{p} }
  \| \omega \|_{L^p\left( B_{\eps {\mathtt h}(x)}(x) \right) }
  .
 \end{align*}
 Both estimates in combination deliver \eqref{math:mollifier_estimate_local}.
 
 Now we show that $R^{k}_{\eps {\mathtt h}}\omega$ is continuous over $\overline\Omega$.
 Let $\omega$ be written as in \eqref{math:standardrepresentation_fields}, 
 and $x \in \overline\Omega$. Then 
 \begin{align*}
  %&\qquad
  R^{k}_{\eps {\mathtt h}} \omega_{|x}
  &=
  \int_{\bbR^{n}} \mu( y ) (\Phi_{\eps {\mathtt h},y}^{\ast} \omega)_{|x} \dif y
  \\&=
  \sum_{ \sigma \in \Sigma(k,n) }
  \int_{\bbR^{n}} 
  \mu( y )
  \omega_{\sigma}\left( x + \eps \mathtt h(x) y \right) 
  (\Phi_{\eps {\mathtt h},y}^{\ast} \cartanx^{\sigma} )_{|x}
  \dif y
  \\&=
  \eps^{-n} \mathtt h(x)^{-n}
  \sum_{ \sigma \in \Sigma(k,n) }
  \int_{\bbR^{n}} 
  \mu\left( \eps^{\inv} \mathtt h(x)^{\inv}( y - x ) \right)
  \omega_{\sigma}( y ) \cdot  W^{\sigma}_{x,y}
  \dif y
  ,%
 \end{align*}
 where we have written 
 \begin{align*}
  W^{\sigma}_{x,y}
  :=
  (\Phi_{\eps {\mathtt h}, \eps^{\inv} \mathtt h(x)^{\inv}( y - x )}^{\ast} \cartanx^{\sigma} )_{|x}
  .
 \end{align*}
 We recall that $\mathtt h$ and $\Phi$ are smooth,
 that $\omega_{\sigma} \in L^{1}(\Omega)$, and that $\overline\Omega$ is compact. 
 In particular, the derivatives of $\Phi$ are continuous. 
 The desired continuity is now a simple consequence 
 of the dominated convergence theorem. 
 
 It remains to show the commutativity property.
 Let $\eta \in C_c^{\infty}\Lambda^{n-k-1}(\Omega)$. 
 By Fubini's theorem we have 
 \begin{align*}
  \int_{\Omega} R^{k}_{\eps {\mathtt h}} \omega \wedge \cartan\eta
  =
  \int_{\Omega} 
  \int_{\bbR^n}
  \mu(y) \Phi_{\eps {\mathtt h},y}^{\ast} \omega \;\dif y \wedge \cartan\eta
  =
  \int_{\bbR^n}
  \mu(y) 
  \int_{\Omega} 
  \Phi_{\eps {\mathtt h},y}^{\ast} \omega \wedge \cartan\eta
  \;\dif y
  ,
  \\
  \int_{\Omega} R^{k}_{\eps {\mathtt h}} \cartan\omega \wedge \eta
  = 
  \int_{\Omega} 
  \int_{\bbR^n}
  \mu(y) 
  \Phi_{\eps {\mathtt h},y}^{\ast} \cartan \omega
  \;\dif y
  \wedge \eta
  = 
  \int_{\bbR^n}
  \mu(y) 
  \int_{\Omega} 
  \Phi_{\eps {\mathtt h},y}^{\ast} \cartan \omega \wedge \eta
  \;\dif y
  .
 \end{align*}
 When $\eps > 0$ is small enough,
 then $\Phi_{\eps {\mathtt h},y} : \Omega \rightarrow \Omega^{e}$
 is a LIP embedding for every $y \in B_1(0)$.
 Hence by Lemma~\ref{prop:pullback:Lpq} we find 
 \begin{align*}
  \int_{\Omega} 
  \Phi_{\eps {\mathtt h},y}^{\ast} \omega \wedge \cartan\eta
  =
  (-1)^{k+1}
  \int_{\Omega} 
  \cartan \Phi_{\eps {\mathtt h},y}^{\ast} \omega \wedge \eta
  =
  (-1)^{k+1}
  \int_{\Omega} 
  \Phi_{\eps {\mathtt h},y}^{\ast} \cartan \omega \wedge \eta
  .
 \end{align*}
 By definition, $\cartan R^{k}_{\eps {\mathtt h}} \omega = R^{k+1}_{\eps {\mathtt h}} \cartan \omega$.
 The proof is complete. 
\end{proof}

\begin{remark}
 \label{rem:mollifier}
 Our Lemma~\ref{prop:mollifier} is analogous to prior findings in literature.
 Let us briefly review the situation.
 The smoothed projection constructed in \cite{AFW1} 
 applies to \emph{quasi-uniform} families of triangulations.
 A family of triangulations is called quasi-uniform
 if for each triangulation $\calT$ in that family we have 
 \begin{gather}
  \forall T \in \calT^{n}
  : h_T^n \leq C_{mesh} |T|,
  \\
  \forall T, S \in \calT 
  : 
  h_T \leq C_{mesh} h_S,
 \end{gather}
 with a common constant $C_{mesh} > 0$.
 In that case, a classical mollification operator can be used 
 instead of our $R^{k}_{\eps\mathtt h}$.
 That result was expanded in \cite{christiansen2008smoothed} 
 to include \emph{shape-uniform} families of triangulations,
 which means that the conditions \eqref{math:shapeconditions:flatness}
 and \eqref{math:shapeconditions:comparison} are satisfied 
 for all triangulations $\calT$ in that family with a common constant $C_{mesh}$.
 The Lipschitz continuous mesh size function of Lemma~\ref{prop:meshsizefunction} 
 was introduced first in \cite{christiansen2008smoothed}.
 But simple examples show that, contrarily to the statement in \cite{christiansen2008smoothed},
 a regularization operator with that mesh size function does not yield 
 a continuous differential form.
 This is due to the differential of the mesh size function being discontinuous in general.
 As a remedy, we explicitly construct a mesh size function that is smooth.
%  This idea has also been used in \cite{StructPresDisc}.
 
 The Lipschitz continuous mesh size function in Lemma~\ref{prop:meshsizefunction}
 is the limit of the smoothed mesh size function in Lemma~\ref{prop:smoothmeshsizefunction}
 for decreasing mollification radius. It is natural to ask how this limit process
 is reflected in the regularization operator. 
 It is easily seen that the gradient of the original mesh size function
 features tangential continuity. Using this additional property,
 one can show that the regularization operator of \cite{christiansen2008smoothed}
 does yield differential forms that are piecewise continuous with respect
 to the triangulation and that are single-valued along simplex boundaries.
 Consequently, the regularized differential form, though not continuous, 
 still has well-defined degrees of freedom,
 and the finite element interpolator can be applied as intended.
 We emphasize that the main result of \cite{christiansen2008smoothed} remains unchanged.
\end{remark}

\subsection{Smoothed Interpolation and Smoothed Projection}

Combining the extension operator, the mollification operator, 
and the finite element interpolator,
we provide the \emph{smoothed interpolator}
\begin{align}
 Q^{k}_{\eps} 
 :
 L^{p}\Lambda^{k}(\Omega)
 \rightarrow
 L^{p}\Lambda^{k}(\calT),
 \quad 
 \omega
 \mapsto
 I^{k} R^{k}_{\eps {\mathtt h}} E^{k} \omega,
 \quad 
 p \in [1,\infty]. 
\end{align}
We show that $Q^{k}_{\eps}$ satisfies local bounds 
and commutes with the exterior derivative:

\begin{theorem}
 \label{prop:interpolationbound}
 Let $\eps > 0$ be small enough.
 For $p \in [1,\infty]$,
 the operator $Q^{k}_{\eps} : L^{p}\Lambda^{k}(\Omega) \rightarrow L^{p}\Lambda^{k}(\calT)$ is linear and bounded,
 and there exists uniformly bounded $C_{Q,p} > 0$ such that 
 \begin{align}
  \label{prop:interpolationbound:localestimate}
  \| Q^{k}_{\eps} \omega \|_{L^{p}\Lambda^{k}(T)}
  &\leq
  C_{Q,p} \eps^{ -\frac{n}{p} } \| \omega \|_{L^{p}\Lambda^{k}(\calT(T))},
  \quad 
  \omega \in L^{p}\Lambda^{k}(\Omega),
  \quad T \in \calT^{n}, 
  \\
  \label{prop:interpolationbound:globalestimate}
  \| Q^{k}_{\eps} \omega \|_{L^{p}\Lambda^{k}(\Omega)}
  &\leq
  C_N^{\frac{1}{p}} C_{Q,p} \eps^{ -\frac{n}{p} } \| \omega \|_{L^{p}\Lambda^{k}(\Omega)},
  \quad 
  \omega \in L^{p}\Lambda^{k}(\Omega)
  .
 \end{align}
 Moreover, we have 
 \begin{align}
  \label{prop:interpolationbound:commutativity}
  \cartan Q^{k}_{\eps} \omega = Q^{k}_{\eps} \cartan \omega,
  \quad 
  \omega \in L^{p,q}\Lambda^{k}(\Omega),
  \quad 
  p, q \in [1,\infty].
 \end{align}
 \textit{Constants:
 it suffices that $\eps > 0$ is small enough to apply Lemma~\ref{prop:mollifier}
 and that $L_{\Psi} C_h \eps < \eps_h$.
 We may choose 
 $C_{Q,p}
 = 
%  2^{k}
 (1 + \eps L_h)^{k}
 \vol^{n}(B_1(0)) 
%  \binom{n}{k} %FIXME: Modify proof to get rid of this here.
%  C_{mesh}^{ \frac{n}{p} } 
 C_{h}^{ \frac{n}{p} } c_{M}^{k} C_{M}^{k} C_I C_{E,p}$.
 }
\end{theorem}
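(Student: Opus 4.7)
The plan is to obtain the local estimate \eqref{prop:interpolationbound:localestimate} by unwinding the definition $Q^{k}_{\eps} = I^{k} R^{k}_{\eps\mathtt h} E^{k}$ from the outside in, passing to the reference simplex $\Delta^{n}$ so that the inverse inequality for the interpolator can be applied. First I pull back along $A_{T}$: by Theorem~\ref{prop:pullbackestimate} together with \eqref{math:referencetransformation},
\begin{align*}
 \| Q^{k}_{\eps}\omega \|_{L^{p}\Lambda^{k}(T)}
 \leq
 C_{M}^{k} c_{M}^{n/p} h_{T}^{n/p-k}
 \| A_{T}^{\ast} I^{k} R^{k}_{\eps\mathtt h} E^{k} \omega \|_{L^{p}\Lambda^{k}(\Delta^{n})}.
\end{align*}
Since $|\Delta^{n}| \leq 1$, the $L^{p}$-norm over $\Delta^{n}$ is dominated by the $L^{\infty}$-norm, and then \eqref{math:feminterpolator:simplexbound} bounds the latter by $C_{I}$ times the $C\Lambda^{k}(\Delta^{n})$-norm of $A_{T}^{\ast} R^{k}_{\eps\mathtt h} E^{k}\omega$. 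Pushing this back to $T$ (again using \eqref{math:referencetransformation}) produces the factor $c_{M}^{k} h_{T}^{k}$, which cancels the $h_{T}^{-k}$ above.

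Next I insert Lemma~\ref{prop:mollifier} to bound $\|R^{k}_{\eps\mathtt h} E^{k}\omega\|_{C\Lambda^{k}(T)}$ by $(1+\eps L_{h})^{k} \vol^{n}(B_{1}(0)) C_{h}^{n/p} \eps^{-n/p} h_{T}^{-n/p}$ times $\|E^{k}\omega\|_{L^{p}\Lambda^{k}(B_{C_{h}\eps h_{T}}(T))}$, which makes the remaining $h_{T}^{n/p}$ cancel. Finally, Corollary~\ref{prop:extensionestimate:furtherrefined} controls the extension via
\begin{align*}
 \| E^{k}\omega \|_{L^{p}\Lambda^{k}(B_{C_{h}\eps h_{T}}(T))}
 \leq
 C_{E,p} \| \omega \|_{L^{p}\Lambda^{k}( B_{L_{\Psi} C_{h}\eps h_{T}}(T) \cap \overline\Omega )},
\end{align*}
and, provided $L_{\Psi} C_{h} \eps < \eps_{h}$, the domain on the right-hand side is contained in $\calT(T)$ by \eqref{math:neighborcontainment:duo}. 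Assembling these four inequalities gives \eqref{prop:interpolationbound:localestimate} with a constant of the shape stated in the theorem, all powers of $h_{T}$ having cancelled.

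The global estimate \eqref{prop:interpolationbound:globalestimate} follows by raising \eqref{prop:interpolationbound:localestimate} to the $p$-th power, summing over $T \in \calT^{n}$, and invoking the finite-overlap property: each simplex belongs to at most $C_{N}$ patches $\calT(T)$, so $\sum_{T}\|\omega\|_{L^{p}\Lambda^{k}(\calT(T))}^{p} \leq C_{N}\|\omega\|_{L^{p}\Lambda^{k}(\Omega)}^{p}$. The case $p = \infty$ is a direct supremum over the same local bound. The commutativity \eqref{prop:interpolationbound:commutativity} is just a chase through the three constituent operators:
\begin{align*}
 \cartan Q^{k}_{\eps}\omega
 =
 \cartan I^{k} R^{k}_{\eps\mathtt h} E^{k}\omega
 =
 I^{k+1} \cartan R^{k}_{\eps\mathtt h} E^{k}\omega
 =
 I^{k+1} R^{k+1}_{\eps\mathtt h} \cartan E^{k}\omega
 =
 I^{k+1} R^{k+1}_{\eps\mathtt h} E^{k+1} \cartan\omega,
\end{align*}
where the three equalities use \eqref{math:interpolationoperatorcommutingdiagram}, Lemma~\ref{prop:mollifier}, and Lemma~\ref{prop:extensionderivativecommute}, respectively.

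No single step is deep; the only genuine obstacle is bookkeeping. One must verify that the various powers of $h_{T}$ arising from the pullback/pushforward along $A_{T}$ and from the $\eps^{-n/p} h_{T}^{-n/p}$ coming out of the mollifier combine to a purely geometric, $h$-independent prefactor, and that the smallness assumptions on $\eps$ from Lemma~\ref{prop:mollifier} (so that $\Phi_{\eps\mathtt h,y}$ is a LIP embedding into $\Omega^{e}$) and from \eqref{math:neighborcontainment:duo} (so that the extended neighborhood fits inside $\calT(T)$) are simultaneously satisfied; the hypothesis $L_{\Psi} C_{h}\eps < \eps_{h}$ stated in the theorem arranges exactly the latter.
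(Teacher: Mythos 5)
Your proposal is correct and follows essentially the same route as the paper: unwind $Q^{k}_{\eps} = I^{k} R^{k}_{\eps\mathtt h} E^{k}$, pass to the reference simplex with the interpolator inverse inequality, apply Lemma~\ref{prop:mollifier} and Corollary~\ref{prop:extensionestimate:furtherrefined} with the neighbor-containment condition $L_{\Psi} C_h \eps < \eps_h$, then sum to the $p$-th power for the global bound and chase the three commutativity lemmas for \eqref{prop:interpolationbound:commutativity}. The only (harmless) cosmetic difference is the order of the first two reductions: you pull back $L^p(T) \to L^p(\Delta^n)$ via Theorem~\ref{prop:pullbackestimate} and then pass to $L^\infty(\Delta^n)$, whereas the paper passes $L^p(T) \to L^\infty(T)$ via $|T|^{1/p} \leq h_T^{n/p}$ and then pulls back in $L^\infty$; this introduces an extra bounded factor $c_M^{n/p}$ in your constant but does not change anything substantive.
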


\begin{proof}
 Let $\omega \in L^{p}\Lambda^{k}(\Omega)$ and let $T \in \calT^{n}$.
 Then 
 \begin{align*}
  \| Q^{k}_{\eps} \omega \|_{L^{p}\Lambda^{k}(T)}
  &\leq 
  \| I^{k} R^{k}_{\eps {\mathtt h}} E^{k} \omega \|_{L^{p}\Lambda^{k}(T)}
  \\&\leq 
  |T|^{\frac{1}{p}} \| I^{k} R^{k}_{\eps {\mathtt h}} E^{k} \omega \|_{L^{\infty}\Lambda^{k}(T)}
  \leq 
%   C_{mesh}^{n/p} 
  h_T^{\frac{n}{p}}\| I^{k} R^{k}_{\eps {\mathtt h}} E^{k} \omega \|_{L^{\infty}\Lambda^{k}(T)}
  . 
 \end{align*}
 Estimate \eqref{math:feminterpolator:simplexbound} gives 
 \begin{align*}
  \| I^{k} R^{k}_{\eps {\mathtt h}} E^{k} \omega \|_{L^{\infty}\Lambda^{k}(T)}
  &\leq 
  C_M^{k} h_T^{-k} 
  \| A_T^{\ast} I^{k} R^{k}_{\eps {\mathtt h}} E^{k} \omega \|_{L^{\infty}\Lambda^{k}(\Delta^{n})}
  \\&\leq 
  C_I C_M^{k} h_T^{-k} 
  \| A_T^{\ast} R^{k}_{\eps {\mathtt h}} E^{k} \omega \|_{L^{\infty}\Lambda^{k}(\Delta^{n})}
  \\&\leq 
  C_{I} c_{M}^{k} C_M^{k} 
  \| R^{k}_{\eps {\mathtt h}} E^{k} \omega \|_{C\Lambda^{k}(T)}.
 \end{align*}
 Assuming that $\eps > 0$ is small enough, we apply Lemma~\ref{prop:mollifier},
 \begin{align*}
  \| R^{k}_{\eps {\mathtt h}} E^{k} \omega \|_{C\Lambda^{k}(T)}
  &\leq
%   2^{k}
  (1 + \eps L_h)^{k}
  \vol^{n}(B_1(0)) 
%   \binom{n}{k} % FIXME: Modify proof to get rid of this here. 
  \eps^{ -\frac{n}{p} } h_T^{-\frac{n}{p}} C_{h}^{ \frac{n}{p} }
  \| E^{k} \omega \|_{L^p\Lambda^{k}\left( B_{C_{h} \eps h_T}(T) \right) }
  ,  
 \end{align*}
 and find with \eqref{math:neighborcontainment} and Corollary~\ref{prop:extensionestimate:furtherrefined} that 
 \begin{align*}
  \| E^{k} \omega \|_{L^p\Lambda^{k}\left( B_{C_{h} \eps h_T}(T) \right) }
  &=
  \| E^{k} \omega \|_{L^p\Lambda^{k}\left( B_{C_{h} \eps h_T}(T) \cap \Omega^{e} \right) }
  \\&\leq 
  C_{E,p}
  \| \omega \|_{L^p\Lambda^{k}\left( B_{L_{\Psi} C_{h} \eps h_T}(T) \cap \overline\Omega \right) }
  \\&\leq 
  C_{E,p}
  \| \omega \|_{L^p\Lambda^{k}\left( B_{ \eps_h h_T}(T) \cap \overline\Omega \right) }
  \leq 
  C_{E,p}
  \| \omega \|_{L^p\Lambda^{k}\left( \calT(T) \right) }
  . 
 \end{align*}
 Thus the local bound \eqref{prop:interpolationbound:localestimate} follows.
 The global bound \eqref{prop:interpolationbound:globalestimate}
 is obtained via 
 \begin{align*}
  \| Q^{k}_{\eps} \omega \|_{L^{p}\Lambda^{k}( \Omega )}^{p}
  &=
  \sum_{ T \in \calT^{n} }
  \| Q^{k}_{\eps} \omega \|_{L^{p}\Lambda^{k}( T )}^{p}
  \leq C_{Q,p}^{p}  
  \sum_{ T \in \calT^{n} }
  \| \omega \|_{L^{p}\Lambda^{k}( \calT(T) )}^{p}
  \\&\leq C_{Q,p}^{p} C_{N}  
  \sum_{ T \in \calT^{n} }
  \| \omega \|_{L^{p}\Lambda^{k}( T )}^{p}
  \leq C_{Q,p}^{p} C_{N}  
  \| \omega \|_{L^{p}\Lambda^{k}( \Omega )}^{p}
 \end{align*}
 for $p \in [1,\infty)$, and for $p = \infty$ similarly. 
 Finally, \eqref{prop:interpolationbound:commutativity}
 follows from Theorem~\ref{prop:extensionderivativecommute},
 Theorem~\ref{prop:mollifier}, and our assumptions on $I^{k}$. 
 The proof is complete. 
\end{proof}

%%%%%%%%%%%%%%%%%%%%%%%%%%%%%%%%%%
%%%%%%%%%%%%%%%%%%%%%%%%%%%%%%%%%%
%%%%%%%%%%%%%%%%%%%%%%%%%%%%%%%%%%
%%%%%%%%%%%%%%%%%%%%%%%%%%%%%%%%%%
%%%%%%%%%%%%%%%%%%%%%%%%%%%%%%%%%%
%%%%%%%%%%%%%%%%%%%%%%%%%%%%%%%%%%
%%%%%%%%%%%%%%%%%%%%%%%%%%%%%%%%%%
%%%%%%%%%%%%%%%%%%%%%%%%%%%%%%%%%%
%%%%%%%%%%%%%%%%%%%%%%%%%%%%%%%%%%
%%%%%%%%%%%%%%%%%%%%%%%%%%%%%%%%%%
%%%%%%%%%%%%%%%%%%%%%%%%%%%%%%%%%%
%%%%%%%%%%%%%%%%%%%%%%%%%%%%%%%%%%

The smoothed interpolator $Q_{\eps}^{k}$ is local and satisfies uniform bounds.
Although $Q_{\eps}^{k}$ generally does not reduce to the identity over $\Lambda^{k}_{}(\calT)$,
we can show that, for $\eps > 0$ small enough, it is close to the identity
and satisfies a local error estimate.

\begin{theorem}
 \label{prop:interpolation_error}
 For $\eps > 0$ small enough, 
 there exists uniformly bounded $C_{e,p} > 0$ for every $p \in [1,\infty]$ 
 such that 
 \begin{align*}
  \| \omega - Q_{\eps}^{k} \omega \|_{L^{p}\Lambda^{k}(T)}
  \leq 
  \epsilon C_{e,p} 
  \| \omega \|_{L^{p}\Lambda^{k}(\calT(T))},
  \quad 
  \omega \in \Lambda^{k}_{}(\calT),
  \quad 
  T \in \calT^{n}
  .
 \end{align*}
 % C_I C_{mesh}^{n/p} C_{M}^{k} 
 % k=0: C_{E,\infty} C_h C_M \epsilon C_{\flat,p,k}
 % k>0: \epsilon C_h C_M l^{k} ( 1 + C_{\partial} ) \cdot C_{E,\infty} C_{M}^{k+1} C_{\flat,p,k} 
 % 
 % k=0: \epsilon C_h C_M C_{E,\infty} C_{\flat,p,k}
 % k>0: \epsilon C_h C_M C_{E,\infty} C_{\flat,p,k} l^{k} ( 1 + C_{\partial} ) C_{M}^{k+1} 
 % 
 % TOTAL:
 % C_I C_{mesh}^{n/p} C_{M}^{k} \epsilon C_h C_M C_{E,\infty} C_{\flat,p,k} l^{k} ( 1 + C_{\partial} ) C_{M}^{k+1}
 % C_I C_{mesh}^{n/p} C_{M}^{2k+2} \epsilon C_h C_{E,\infty} C_{\flat,p,k} l^{k} ( 1 + C_{\partial} )
 % 
 \textit{Constants:
 It suffices that $\eps > 0$ is small enough such that Theorem~\ref{prop:interpolationbound} is applicable
 and $L_{\Psi} C_{M}C_{h} \epsilon < \eps_h$.
 We may choose
 \begin{align*}
  C_{e,p}
  =
%   C_I C_{mesh}^{n/p} C_{M}^{2k+2} C_h C_{E,\infty} C_{\flat,p,k} \left( 1 + \eps C_M L_h \right)^{k} ( 1 + C_{\partial} )
%   .
  c_M^{2k+1} C_{M}^{2k+2+\frac{n}{p}} 
  C_I C_h 
  \left( 1 + c_M C_M L_h \epsilon \right)^{k}
  ( 1 + C_{\partial} ) 
  C_{E,\infty} C_{\flat,p,k} 
  . 
 \end{align*}
% 
%  
%  with $l = ( 1 + \eps C_{M} L_{h} )$.
 }
\end{theorem}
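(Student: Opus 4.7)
The backbone of the argument is the identity
\begin{equation*}
 \omega - Q^{k}_{\eps}\omega = I^{k}\bigl( E^{k}\omega - R^{k}_{\eps\mathtt h} E^{k}\omega \bigr),
 \quad \omega \in \Lambda^{k}(\calT).
\end{equation*}
Indeed, every $S \in \calC_{k}(\calT)$ is supported inside $\overline\Omega$, where $E^{k}\omega$ coincides with $\omega$, so $I^{k} E^{k}\omega = I^{k}\omega = \omega$ by \eqref{math:feinterpolator:idempotent}; the task reduces to estimating the interpolant of the mollification residual. Passing to the reference simplex via $|T|^{1/p} \leq h_T^{n/p}$, Theorem~\ref{prop:pullbackestimate} applied to $A_T^{\inv}$, and the interpolator bound \eqref{math:feminterpolator:micro} yields
\begin{equation*}
 \| \omega - Q^{k}_{\eps}\omega \|_{L^{p}\Lambda^{k}(T)}
 \leq
 h_T^{n/p} C_M^{k} h_T^{-k} C_I
 \sup_{\substack{F \in \Delta(T) \\ S \in \calC_{k}^{F}}}
 |A_{T\ast}^{\inv} S|_{k}^{\inv}
 \left| \int_{S} \bigl( E^{k}\omega - R^{k}_{\eps\mathtt h} E^{k}\omega \bigr) \right|.
\end{equation*}

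Next, the pushforward--pullback duality \eqref{math:pushforward_pullback_duality} converts the mollifier to an average of pushed-forward chains: since $R^{k}_{\eps\mathtt h}\eta = \int_{B_1(0)} \mu(y) \Phi_{\eps\mathtt h,y}^{\ast}\eta \,\dif y$, one has
\begin{equation*}
 \int_{S} \bigl( E^{k}\omega - R^{k}_{\eps\mathtt h}E^{k}\omega \bigr)
 =
 \int_{B_{1}(0)} \mu(y) \int_{S - \Phi_{\eps\mathtt h,y,\ast}S} E^{k}\omega \, \dif y .
\end{equation*}
Transferring the chains to the reference via $\tilde S := A_{T\ast}^{\inv} S$ and $\tilde\Phi_{\eps,y} := A_{T}^{\inv} \circ \Phi_{\eps\mathtt h,y} \circ A_{T}$, the flat-chain duality \eqref{math:dualityestimate:flat}, the deformation Lemma~\ref{prop:deformationestimate}, and the inverse inequality \eqref{math:inverseinequality:dof} give
\begin{equation*}
 |\tilde S|_{k}^{\inv} \left| \int_{\tilde S - \tilde\Phi_{\eps,y,\ast}\tilde S} A_{T}^{\ast}E^{k}\omega \right|
 \leq
 \| \tilde\Phi_{\eps,y} - \Id \|_{L^{\infty}}
 \bigl( \tilde l^{\,k} + C_{\partial} \tilde l^{\,k-1} \bigr)
 \| A_{T}^{\ast} E^{k}\omega \|_{L^{\infty,\infty}(\tilde U)}
 ,
\end{equation*}
where $\tilde l = \sup\{\Lip(\tilde\Phi_{\eps,y}),1\}$ on a reference neighborhood $\tilde U$. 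The explicit form $\Phi_{\eps\mathtt h,y}(x)-x = \eps \mathtt h(x)y$, combined with \eqref{math:referencetransformation} and the bounds of Lemma~\ref{prop:smoothmeshsizefunction}, yields $\|\tilde\Phi_{\eps,y}-\Id\|_{L^{\infty}} \leq \eps C_{h} C_{M}$ and $\tilde l \leq 1 + \eps c_{M} C_{M} L_{h}$. This extracts the crucial linear factor of $\eps$ from the fact that the degrees of freedom are flat chains of finite mass with boundaries of finite mass.

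The last piece is to convert the residual factor $\| A_{T}^{\ast} E^{k}\omega \|_{L^{\infty,\infty}(\tilde U)}$ back to $\| \omega \|_{L^{p}\Lambda^{k}(\calT(T))}$. Theorem~\ref{prop:pullbackestimate} supplies a scaling factor $c_{M}^{k} h_{T}^{k}$ (and $c_{M}^{k+1} h_{T}^{k+1}$ for the exterior derivative) for the pullback by $A_{T}$; the $p=\infty$ case of Corollary~\ref{prop:extensionestimate:furtherrefined} controls $E^{k}\omega$ on a neighborhood of $T$ by $\omega$ on $\calT(T)$ (using Lemma~\ref{prop:extensionderivativecommute} to handle $\cartan E^{k}\omega = E^{k+1}\cartan\omega$); and the inverse inequality \eqref{math:inverseinequality:fe} trades $L^{\infty}$ for $L^{p}$ on each $T' \in \calT(T)$ at cost $h_{T}^{-n/p}$ modulo shape-regular factors, applying equally to $\cartan\omega$ because in a FEEC-complex one has $\cartan\omega \in \Lambda^{k+1}(\calT)$. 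Collecting all factors, the powers of $h_{T}$ cancel, $\int\mu = 1$ absorbs the outer $y$-integration over $B_{1}(0)$, and the asserted estimate follows. The main obstacle is the scaling bookkeeping, in particular ensuring that the reference mapping $\tilde\Phi_{\eps,y}$ is a Lipschitz self-map of a fixed neighborhood of $\Delta^{n}$ on which Lemma~\ref{prop:deformationestimate} applies and that all the $h_{T}$-powers telescope to the claimed $\eps$-linear bound; this is precisely the role of the smallness hypothesis on $\eps$.
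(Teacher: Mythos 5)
Your proposal is correct and follows essentially the same route as the paper's proof: the identity $\omega - Q^{k}_{\eps}\omega = I^{k}(E^{k}\omega - R^{k}_{\eps\mathtt h}E^{k}\omega)$, reduction to the reference simplex via \eqref{math:feminterpolator:micro}, conversion of the mollifier into an average of pushed-forward chains through \eqref{math:pushforward_pullback_duality}, the deformation estimate of Lemma~\ref{prop:deformationestimate} combined with the inverse inequality \eqref{math:inverseinequality:dof} to extract the linear $\eps$-factor, and the extension bound with \eqref{math:inverseinequality:fe} to return to the $L^{p}$ norm on $\calT(T)$. The scaling constants you identify ($\eps C_{h}C_{M}$ for the displacement and $1 + \eps c_{M}C_{M}L_{h}$ for the Lipschitz constant) match the paper's exactly.
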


\begin{proof}
 We prove the statement by a series of inequalities. 
 Let $\omega \in \Lambda_{}^{k}(\calT)$ and let $T \in \calT^{n}$. 
 Then 
 \begin{align*}
  \|  \omega - Q_{\eps}^{k} \omega \|_{L^{p}\Lambda^{k}(T)}
  &=
  \| I^{k} ( E^{k} \omega - R^{k}_{\eps \mathtt h} E^{k} \omega ) \|_{L^{p}\Lambda^{k}(T)}
%   \\&\leq 
%   |T|^{\frac{n}{p}}
%   \| I^{k} ( E^{k} \omega - R^{k}_{\eps \mathtt h} E^{k} \omega ) \|_{L^{\infty}\Lambda^{k}(T)}
  \\&\leq 
%   C_{mesh}^{\frac{n}{p}} 
  h_T^{\frac{n}{p}}
  \| I^{k} ( E^{k} \omega - R^{k}_{\eps \mathtt h} E^{k} \omega ) \|_{L^{\infty}\Lambda^{k}(T)}
  \\&\leq 
%   C_{mesh}^{\frac{n}{p}} 
  C_{M}^{k} h_T^{\frac{n}{p}-k}
  \| A_T^{\ast} I^{k} ( E^{k} \omega - R^{k}_{\eps \mathtt h} E^{k} \omega ) \|_{L^{\infty}\Lambda^{k}(A_T^{\inv} T)}
  ,
 \end{align*}
 as follows from \eqref{math:referencetransformation}.
 By \eqref{math:feminterpolator:micro} and \eqref{math:pushforward_pullback_duality}, we have 
 \begin{align*}
  \| A_T^{\ast} I^{k} ( E^{k} \omega - R^{k}_{\eps \mathtt h} E^{k} \omega ) \|
  _{L^{\infty}\Lambda^{k}(A_T^{\inv} T)}
  \leq 
  C_I
  \sup_{ \substack{ F \subseteq T \\ S \in \calC_{k}^{F} } }
  |A_{T\ast}^{\inv}S|^{\inv}_{k}
%   \int_{A_{T\ast}^{\inv}S} 
%    A_T^{\ast} E^{k} \omega
%    -
%    A_T^{\ast} R^{k}_{\eps \mathtt h} E^{k} \omega
   \int_{S} 
   E^{k} \omega
   -
   R^{k}_{\eps \mathtt h} E^{k} \omega
   .
 \end{align*}
 We need to bound the last expression.
 Fix $F \in \Delta(T)$ and $S \in \calC^{F}_{k}$.
 We see that 
 \begin{align*}
  \int_{S}
   E^{k} \omega
   - R^{k}_{\eps \mathtt h} E^{k} \omega
  =
  \int_{S} 
  \int_{\bbR^{n}} \mu(y) 
  \left( 
   E^{k} \omega
   -
   \Phi_{\eps {\mathtt h},y}^{\ast} E^{k} \omega \right)
  \dif y.
 \end{align*}
 By assumption on $S$,
 both integrals are taken in the sense of measure theory,
 and we may apply Fubini's theorem:
 \begin{align*}
  \int_{S} 
  \int_{\bbR^{n}} \mu(y) 
   \Phi_{\eps {\mathtt h},y}^{\ast} E^{k} \omega
  \;\dif y
  =
  \int_{\bbR^{n}} \mu(y) 
  \int_{S} 
   \Phi_{\eps {\mathtt h},y}^{\ast} E^{k} \omega  
  \;\dif y.
 \end{align*}
 Using these observations and \eqref{math:pushforward_pullback_duality} again, we have 
 \begin{align*}
  \int_{\bbR^{n}} \mu(y) 
  \int_{ S} 
   E^{k} \omega - \Phi_{\eps {\mathtt h},y}^{\ast} E^{k} \omega
  \dif y
  =
  \int_{\bbR^{n}} \mu(y) 
  \int_{ A_{T\ast}^{\inv} S - A_{T\ast}^{\inv} \Phi_{\eps {\mathtt h},y \ast} S } 
   A_{T}^{\ast} E^{k} \omega
  \;\dif y
  .
 \end{align*}
 With \eqref{math:dualityestimate:flat}, it follows that 
 \begin{align*}
  &
  \int_{\bbR^{n}} \mu(y) 
  \int_{ A_{T\ast}^{\inv} S - A_{T\ast}^{\inv} \Phi_{\eps {\mathtt h},y\ast} S } A_T^{\ast} E^{k} \omega 
  \; \dif s \dif y
  \\&\quad\quad\leq
  \sup_{ y \in B_1(0) }
  \| A_{T\ast}^{\inv} S - A_{T\ast}^{\inv} \Phi_{\eps {\mathtt h},y\ast} S \|_{k,\flat}
  \cdot 
  \| A_T^{\ast} E^{k} \omega \|_{L^{\infty,\infty}\Lambda^{k}(B_{C_{M}C_{h}\epsilon}(\Delta^{n}))}  
  .
 \end{align*}
 We need to bound this product. 
 On the one hand, we observe that 
 \begin{gather*}
  \sup_{ \substack{ x \in A_T^{-1}(F) \\ y \in B_1(0) } }
%   \sup_{ \substack{ y \in B_1(0) } }
  | x - A_T^{-1} \Phi_{\eps \mathtt h,y} A_T^{}(x) |
  \leq 
  \sup_{ x \in A_T^{-1}(F) }
  | \epsilon \mathtt h( A_T x ) A_T^{-1} |
  \leq 
  C_{h} C_{M} \epsilon,
%   \quad 
%   x \in F, 
  \\
%  \end{gather*}
%  for $x \in F$, and 
%  \begin{gather*}
  \sup_{ y \in B_1(0) }
  \Lip\left( A_T^{-1} \Phi_{\eps\mathtt h,y} A_T, A_T^{-1}(F) \right)
  \leq 
  1 + c_M C_M L_h \epsilon.
 \end{gather*}
 By Lemma~\ref{prop:deformationestimate}, we then estimate
 \begin{align*}
  &\sup_{ y \in B_1(0) }
  \| A_{T\ast}^{\inv} S - A_{T\ast}^{\inv} \Phi_{\eps {\mathtt h},y\ast} S \|_{k,\flat}
  = 
  \sup_{ y \in B_1(0) }
  \| A_{T\ast}^{\inv} S - A_{T\ast}^{\inv} \Phi_{\eps {\mathtt h},y\ast} A_{T\ast} A_{T\ast}^{\inv} S \|_{k,\flat}
  \\&\qquad\leq 
  \eps C_{h} C_{M} \left( 1 + c_M C_M L_h \epsilon \right)^{k}
  \left( |A_{T\ast}^{\inv} S|_{k} + |\partial A_{T\ast}^{\inv} S|_{k-1} \right)
  .
 \end{align*}
%  where $l := \sup\{ 1, 1 + \eps C_M L_h \}$,
%  where $l := 1 + \eps C_M L_h$.
 The inverse inequality \eqref{math:inverseinequality:dof} gives 
 \begin{align*}
  | \partial A_{T\ast}^{\inv} S |_{k-1}  
  \leq 
  C_{\partial} 
  | A_{T\ast}^{\inv} S |_{k}  
  .
 \end{align*}
 On the other hand, we observe 
 \begin{align*}
  \| A_T^{\ast} E^{k} \omega \|_{L^{\infty,\infty}\Lambda^{k}(B_{C_{M}C_{h}\epsilon}(\Delta^{n}))}  
  \leq
  C_{E,\infty} c_M^{k+1} C_M^{k+1}
  \| A_T^{\ast} \omega \|_{L^{\infty,\infty}\Lambda^{k}(A_T^{\inv}\calT(T))}.  
 \end{align*}
 To see this, we let $\epsilon > 0$ be small enough
 and apply Corollary~\ref{prop:extensionestimate:furtherrefined}
 to obtain 
 \begin{align*}
%   & 
  \| A_T^{\ast} E^{k} \omega \|_{L^{\infty}\Lambda^{k}(B_{C_{M}C_{h}\epsilon}(\Delta^{n}))}  
  &\leq 
  c_M^{k} h_T^{k}
  \| E^{k} \omega \|_{L^{\infty}\Lambda^{k}(B_{C_{M}C_{h} \epsilon h_T}(T))}  
  \\&\leq 
  C_{E,\infty} c_M^{k} h_T^{k}
  \| \omega \|_{L^{\infty}\Lambda^{k}( B_{L_{\Psi} C_{M}C_{h} \epsilon h_T}(T) \cap \overline\Omega)}  
  \\&\leq 
  C_{E,\infty} c_M^{k} h_T^{k}
  \| \omega \|_{L^{\infty}\Lambda^{k}(\calT(T))}  
  \\&\leq 
  C_{E,\infty} c_M^{k} C_M^{k} 
  \| A_T^{\ast} \omega \|_{L^{\infty}\Lambda^{k}(A_T^{\inv}\calT(T))}  
  .
 \end{align*}
 We treat $A_T^{\ast} E^{k+1} \cartan\omega$ similarly.
 The inverse inequality \eqref{math:inverseinequality:fe} gives 
 \begin{align*}
  \| A_T^{\ast} \omega \|_{L^{\infty,\infty}\Lambda^{k}(A_T^{\inv}\calT(T))}
  \leq
  C_{\flat,p,k}
  \| A_T^{\ast} \omega \|_{L^{p}\Lambda^{k}(A_T^{\inv}\calT(T))}
  .
 \end{align*}
 In combination, it follows that 
 \begin{align*}
  &
  \| A_T^{\ast} I^{k} ( \omega - R^{k}_{\eps \mathtt h} E^{k} \omega ) \|
  _{L^{\infty}\Lambda^{k}(\Delta^{n})}
  \\&\quad \leq 
  C_I C_h C_M \left( 1 + c_M C_M L_h \epsilon \right)^{k} ( 1 + C_{\partial} ) 
  C_{E,\infty} c_M^{k+1} C_M^{k+1} C_{\flat,p,k} 
  \epsilon
  \| A_T^{\ast} \omega \|_{L^{p}\Lambda^{k}(\calT(T))}
  .
 \end{align*}
 We finally recall that 
 \begin{align*}
  \| A_T^{\ast} \omega \|_{L^{p}\Lambda^{k}(A_T^{\inv}\calT(T))}
  \leq 
  c_M^{k} C_{M}^{\frac{n}{p}} h_T^{k-\frac{n}{p}} 
  \| \omega \|_{L^{p}\Lambda^{k}(\calT(T))}
  .
 \end{align*}
 This completes the proof.
\end{proof}

\begin{remark}
 \label{rem:interpolationerror}
 Our Theorem~\ref{prop:interpolation_error} resembles Lemma 5.5 in \cite{AFW1}
 and Lemma 4.2 in \cite{christiansen2008smoothed}.
 Let us briefly motivate why we use a different method of proof. 
 In order to obtain the interpolation error estimate over simplices $T \in \calT$,
%  that are contained in the interior of $\Omega$,
 the authors of the aforementioned references 
 suppose that finite element differential forms are piecewise Lipschitz near $T$.
 This holds true if $T$ is an interior simplex
 but not if $T$ touches the boundary of $\Omega$,
 and it is not clear how their method applies for such $T$.
 The reason is that their extension operator, like ours, involves a pullback along a bi-Lipschitz mapping,
 so the extended finite element differential form 
 is not necessarily Lipschitz continuous anywhere outside of $\Omega$. 
 The extended differential form, however, is still a flat form,
 and this motivates our utilization of geometric measure theory 
 to prove the desired estimate for the interpolation error. 
 
 For strongly Lipschitz domains, Lipschitz collars with stronger regularity
 may provide an alternative remedy,
 but we do explore this idea further in this article. 
%  This is not explored in this article. 
%  Alternatively, one might show that strongly Lipschitz domains
%  admit a Lipschitz collar with additional regularity. 
%  
%  
%  where finite element differential forms are extended by reflection along a normal field.
%  
%  The proofs in those references utilize piecewise Lipschitz regularity 
%  of the finite element differential form.
%  For interior simplices, the differential form is indeed piecewise Lipschitz.
%  But along boundary simplices, the mollified differential form takes into account
%  values of its extension. Since the extension operator is assumed to be merely bi-Lipschitz,
%  the extended differential form is generally not Lipschitz along the boundary,
%  and different techniques need to be applied there.
%  The extended differential form is a flat form, however, and this motivates our utilization
%  to geometric measure theory to obtain the desired estimate for the interpolation error. 
\end{remark}

We are now in a position to prove the main result of this article.
For $\eps > 0$ small enough, we can correct the error of the smoothed interpolation
over the finite element space.
The resulting smoothed projection is, however, non-local.

\begin{theorem}
 \label{prop:finalprojection}
 Let $\eps > 0$ be small enough.
 There exists a bounded linear operator 
 \begin{align*}
  \pi^{k}_{\eps} : L^{p}\Lambda^{k}(\Omega) \rightarrow L^{p}\Lambda^{k}(\calT),
  \quad 
  p \in [1,\infty],
 \end{align*}
 such that 
 \begin{align*}
  \pi^{k}_{\eps} \omega = \omega, \quad \omega \in \Lambda^{k}_{}(\calT),
 \end{align*}
 such that  
 \begin{align*}
  \cartan \pi^{k}_{\eps} \omega = \pi^{k}_{\eps} \cartan \omega,
  \quad 
  \omega \in L^{p,q}\Lambda^{k}(\Omega),
  \quad
  p,q \in [1,\infty],
 \end{align*}
 and such that for all $p \in [1,\infty]$ there exist uniformly bounded $C_{\pi,p} > 0$ with
 \begin{align*}
  \| \pi^{k}_{\eps} \omega \|_{L^{p}\Lambda^{k}(\calT)}
  \leq 
  C_{\pi,p} \eps^{ -\frac{n}{p} } \| \omega \|_{L^{p}\Lambda^{k}(\Omega)},
  \quad 
  \omega \in L^{p}\Lambda^{k}(\Omega).
 \end{align*}
 \textit{Constants: it suffices that $\eps > 0$ is so small 
 that Theorem~\ref{prop:interpolationbound} and Theorem~\ref{prop:interpolation_error} apply,
 and that $C_{e,p} \eps < 2$.
 We may assume $C_{\pi,p} = 2 C_{Q,p} C_{N}^{\frac{1}{p}}$.
 } 
\end{theorem}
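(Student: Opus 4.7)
The plan is to implement the Sch\"oberl-type correction alluded to in the introduction. Let $J^{k} := Q^{k}_{\eps|\Lambda^{k}(\calT)}$ denote the restriction of the smoothed interpolator to the finite element space, viewed as a linear endomorphism of the finite-dimensional space $\Lambda^{k}(\calT)$. The first task is to show that, for $\eps$ small, $J^{k}$ is invertible with a uniform $L^{p}$-norm bound on its inverse. This follows by combining Theorem~\ref{prop:interpolation_error} with the finite-overlap bound $|\calT(T)| \leq C_{N}$: for $\omega \in \Lambda^{k}(\calT)$ and $p \in [1,\infty)$,
\begin{align*}
 \| \omega - J^{k} \omega \|_{L^{p}\Lambda^{k}(\Omega)}^{p}
 = \sum_{T \in \calT^{n}} \| \omega - Q^{k}_{\eps} \omega \|_{L^{p}\Lambda^{k}(T)}^{p}
 \leq \eps^{p} C_{e,p}^{p} C_{N} \| \omega \|_{L^{p}\Lambda^{k}(\Omega)}^{p},
\end{align*}
with the analogous bound for $p = \infty$. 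Hence $\| \Id - J^{k} \|_{L^{p} \to L^{p}} \leq \eps C_{e,p} C_{N}^{1/p}$, and once $\eps$ is chosen so that $\eps C_{e,p} C_{N}^{1/p} \leq 1/2$, the Neumann series yields a bounded inverse $(J^{k})^{\inv} : \Lambda^{k}(\calT) \to \Lambda^{k}(\calT)$ with $\|(J^{k})^{\inv}\|_{L^{p} \to L^{p}} \leq 2$.

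Define then $\pi^{k}_{\eps} := (J^{k})^{\inv} Q^{k}_{\eps}$. Reproduction is immediate: if $\omega \in \Lambda^{k}(\calT)$ then $Q^{k}_{\eps} \omega = J^{k} \omega$, hence $\pi^{k}_{\eps} \omega = \omega$. The operator norm bound follows by composition from Theorem~\ref{prop:interpolationbound}, giving
\begin{align*}
 \| \pi^{k}_{\eps} \omega \|_{L^{p}\Lambda^{k}(\Omega)}
 \leq \|(J^{k})^{\inv}\|_{L^{p} \to L^{p}} \cdot \| Q^{k}_{\eps} \|_{L^{p} \to L^{p}} \cdot \| \omega \|_{L^{p}\Lambda^{k}(\Omega)}
 \leq 2 C_{N}^{1/p} C_{Q,p} \eps^{-n/p} \| \omega \|_{L^{p}\Lambda^{k}(\Omega)},
\end{align*}
which is the asserted bound with $C_{\pi,p} = 2 C_{Q,p} C_{N}^{1/p}$.

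The commutativity is the only delicate point. Since the complex \eqref{math:finiteelementcomplex} is closed under $\cartan$, the restriction $\cartan : \Lambda^{k}(\calT) \to \Lambda^{k+1}(\calT)$ makes sense, and Theorem~\ref{prop:interpolationbound} gives $J^{k+1} \cartan \omega = Q^{k+1}_{\eps} \cartan \omega = \cartan Q^{k}_{\eps} \omega = \cartan J^{k} \omega$ for $\omega \in \Lambda^{k}(\calT)$. Applying $(J^{k+1})^{\inv}$ on the left and $(J^{k})^{\inv}$ on the right yields $(J^{k+1})^{\inv} \cartan = \cartan (J^{k})^{\inv}$ as operators from $\Lambda^{k}(\calT)$ to $\Lambda^{k+1}(\calT)$. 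For any $\omega \in L^{p,q}\Lambda^{k}(\Omega)$ we then chain the two commuting squares:
\begin{align*}
 \cartan \pi^{k}_{\eps} \omega
 = \cartan (J^{k})^{\inv} Q^{k}_{\eps} \omega
 = (J^{k+1})^{\inv} \cartan Q^{k}_{\eps} \omega
 = (J^{k+1})^{\inv} Q^{k+1}_{\eps} \cartan \omega
 = \pi^{k+1}_{\eps} \cartan \omega.
\end{align*}

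The main obstacle is not conceptual but logistical: one must verify that the threshold for $\eps$ in the Neumann argument is uniform, i.e.\ controlled only by the shape constant, the polynomial degree, and the geometry of $\Omega$. This is the case because $C_{e,p}$ and $C_{N}$ are already uniform in precisely this sense, as established in Theorem~\ref{prop:interpolation_error} and Section~\ref{sec:triangulations}. Combining the smallness conditions of Theorem~\ref{prop:interpolationbound}, Theorem~\ref{prop:interpolation_error}, and the additional requirement $\eps C_{e,p} C_{N}^{1/p} < 1$ (e.g.\ $C_{e,p} \eps < 2$ as stated) determines an admissible range of $\eps$.
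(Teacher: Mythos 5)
Your proposal is correct and follows essentially the same Sch\"oberl-type correction argument as the paper: use Theorem~\ref{prop:interpolation_error} together with the finite-overlap count $C_N$ to show $Q^k_\eps$ restricted to $\Lambda^k(\calT)$ is a contraction of the identity, invert by Neumann series, set $\pi^k_\eps$ equal to the composition, and deduce commutativity from that of $Q^k_\eps$. You merely make the local-to-global overlap step explicit where the paper leaves it implicit.
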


\begin{proof}
 If $\eps > 0$ is small enough and $p \in [1,\infty]$, 
 then Theorem~\ref{prop:interpolation_error} implies that  
 \begin{align*}
  \| \omega - Q^{k}_{\eps} \omega \|_{L^{p}\Lambda^{k}(\Omega)}
  \leq \frac{1}{2}
  \| \omega \|_{L^{p}\Lambda^{k}(\Omega)}
  ,
  \quad
  \omega \in \Lambda^{k}_{}(\calT)
  . 
 \end{align*}
 By standard results, the operator
 $Q^{k}_{\eps} : L^{p}\Lambda^{k}(\calT) \rightarrow L^{p}\Lambda^{k}(\calT)$ is invertible.
 Let $J^{k}_{\eps} : L^{p}\Lambda^{k}(\calT) \rightarrow L^{p}\Lambda^{k}(\calT)$ be its inverse.
 $J^{k}_{\eps}$ does not depend on $p$, since $Q^{k}_{\eps}$ does not depend on $p$.
%  Since $Q^{k}_{\eps}$ is independent of $p$,
%  so is $J^{k}_{\eps}$.
 The construction of $J^{k}_{\eps}$ via a Neumann series reveals that 
% $J^{k}_{\eps}$ is bounded: 
 \begin{align*}
  \| J^{k}_{\eps} \omega \|_{L^{p}\Lambda^{k}(\Omega)} \leq 2 \| \omega \|_{L^{p}\Lambda^{k}(\Omega)},
  \quad
  \omega \in \Lambda^{k}_{}(\calT).
 \end{align*}
 So $J^{k}_{\eps}$ is bounded. 
 Moreover, $J^{k}_{\eps}$ commutes with the exterior derivative because 
 \begin{align*}
  \cartan J^{k}_{\eps} \omega
  =
  J^{k}_{\eps} Q^{k}_{\eps} \cartan J^{k}_{\eps} \omega
  =
  J^{k}_{\eps} \cartan Q^{k}_{\eps} J^{k}_{\eps} \omega
  =
  J^{k}_{\eps} \cartan \omega
  ,
  \quad 
  \omega \in \Lambda^{k}_{}(\calT).
 \end{align*}
%  it commutes with the exterior derivative.
 The theorem follows with $\pi^{k}_{\eps} := J^{k}_{\eps} Q^{k}_{\eps}$.
%  We conclude that 
%  \begin{align*}
%   \pi^{k}_{\eps} = J^{k}_{\eps} Q^{k}_{\eps}
%  \end{align*}
%  satisfies the desired properties.
%  The proof is complete.
\end{proof}

\begin{remark}
 Several quantities in this section depend on a Lebesgue exponent $p \in [1,\infty]$,
 but it suffices to consider only the case $p = 1$.
 We carefully observe that  
 \begin{align*}
  C_{\calA,p} \leq C_{\calA,1},
  \quad 
  C_{E,p} \leq C_{E,1},
  \quad 
  C_{Q,p} \leq C_{Q,1},
  \quad 
  C_{e,p} \leq C_{e,1},
  \quad 
  C_{\pi,p} \leq C_{\pi,1},
 \end{align*}
 for all $p \in [1,\infty]$.
%  In particular, $C_{e,p} \eps < 2$ follows from $C_{e,1} \eps < 2$.
 Hence a sufficiently small choice of $\eps > 0$ is sufficient 
 to enable Theorem~\ref{prop:finalprojection} for all $p \in [1,\infty]$ simultaneously.
\end{remark}

\begin{remark}
 Throughout this section, we have provided explicit formulas
 for the admissible ranges of $\epsilon$ and the various constants.
 With the exception of $C_{\calA,p}$ and $L_{\Omega}$,
 the quantities in those formulas depend only on 
 the ambient dimension, the polynomial degree, and the mesh regularity.
 If bounds for $C_{\calA,p}$ and $L_{\Omega}$ are known,
 then all constants in this section are effectively computable.
%  
%  We have provided expressions for the ranges of $\epsilon$
%  and the various constants in the statements of this section.
%  The parameters in those expressions are computable in principle,
%  with the possibly exception of $C_{\calA,p}$ and $L_{\Omega}$,
%  which depend on the domain and the Lipschitz collar.
%  If estimates for all those parameters are known,
%  then the norm of the smoothed projection is computable.
\end{remark}

\subsection*{Acknowledgments}

The author would like to thank
Douglas N. Arnold and Snorre H. Christiansen for stimulating discussion.
Some of the research of this paper was done while the author was visiting the School of Mathematics at the University of Minnesota, 
whose kind hospitality and financial support is gratefully acknowledged.
This research was supported by the European Research Council through
the FP7-IDEAS-ERC Starting Grant scheme, project 278011 STUCCOFIELDS.

% VII. Bibliography 
\providecommand{\bysame}{\leavevmode\hbox to3em{\hrulefill}\thinspace}
\providecommand{\MR}{\relax\ifhmode\unskip\space\fi MR }
% \MRhref is called by the amsart/book/proc definition of \MR.
\providecommand{\MRhref}[2]{%
  \href{http://www.ams.org/mathscinet-getitem?mr=#1}{#2}
}
\providecommand{\href}[2]{#2}

\end{document}